\numberwithin{equation}{section}
\theoremstyle{dgthm}
\newtheorem{thm}{Theorem}[section]
\newtheorem{theorem}[thm]{Theorem}
\newtheorem{lemma}[thm]{Lemma}
\newtheorem{proposition}[thm]{Proposition}
\theoremstyle{dgdef}
\newtheorem{remark}[thm]{Remark}
\newtheorem{definition}[thm]{Definition}
\DeclarePairedDelimiter{\norm}{\lVert}{\rVert}
\DeclarePairedDelimiter{\sprod}{\langle}{\rangle}
\newcommand{\diff}{\, \mathrm{d}}
\title[Brake Orbits and Finsler Geodesics]
{Brake orbits for Hamiltonian systems of classical type via Finsler geodesics}
\author[D. Corona, F. Giannoni]{}
\subjclass{Primary:
	70G75, % (Variational Methods)
	70H03, % (Lagrange's equations)
	58B20, % (Riemannian, Finsler and other geometric structures)
	58E10, % (Applications to the theory of geodesics)
	53B40.  % (Finsler spaces and generalizations)	
	}
 \keywords{
Hamiltonian systems,
brake orbits,
Finsler metric,
variational methods.  
}
\email{dario.corona@unicam.it,fabio.giannoni@unicam.it}
\begin{document}
\maketitle

% Enter the first author's name and address:
\centerline{\scshape Dario Corona$^*$, Fabio Giannoni$^*$}
\medskip
{\footnotesize
% please put the address of the first author
	\centerline{School of Science and Technology}
	\centerline{Mathematics Division, University of Camerino}
\centerline{Camerino, Italy} 
} % Do not forget to end the {\footnotesize by the sign }

%The abstract of your paper

\begin{abstract}{
We consider Hamiltonian functions of classical type, namely 
even and convex with respect to the generalized momenta.
A brake orbit is a periodic solution of Hamilton's equations
such that the generalized momenta are zero on two different points.
Under mild assumptions,
this paper reduces the multiplicity problem of the brake orbits for a 
Hamiltonian function of classical type 
to the multiplicity problem of orthogonal geodesic chords
in a concave Finslerian manifold with boundary.
This paper will be used for a generalization of a Seifert's conjecture
about the multiplicity of brake orbits
to Hamiltonian functions of classical type.
}
\end{abstract}

\section{Introduction}%
\label{sec:introduction}

Let $H\colon \mathbb{R}^{2n} \to \mathbb{R}$ be an autonomous Hamiltonian function of class $C^2$.
A curve $(q,p)\colon [0,T] \to  \mathbb{R}^{2n} $ is a solution of Hamilton's equations if
\begin{equation}
\label{eq:hamiltonianSys}
\dot{q} = \frac{\partial H}{\partial p}(q,p),
\qquad\text{and}\qquad
\dot{p} = -\frac{\partial H}{\partial q}(q,p).
\end{equation}
Since the Hamiltonian is autonomous,
the conservation law of the energy holds.
More formally,
if $(q,p)\colon [0,T] \to \mathbb{R}^{2n}$ is a solution of Hamilton's equations, then
there exists a real number $E$, called energy, such that
\begin{equation*}
%\label{eq:consEnergy}
H(q(t),p(t)) = E, \quad \forall t \in [0,T].
\end{equation*}
We give the following definitions (cf. \cite{Weinstein1978}).
\begin{definition}
	\label{def:Hclassical}
	A function $H(q,p)$ on $\mathbb{R}^{2n} $ is of classical type if, for each $q_0 \in \mathbb{R}^{n} $, 
	the function $p \mapsto H(q_0,p)$ is even and
	$
	(\partial^2 H/\partial p^2)(q_0,p)
	$
	is strictly positive definite for all $p$,
	namely there exists a continuous function $\nu\colon \mathbb{R}^{n} \to \mathbb{R}$
	such that, for all $q \in \mathbb{R}^{n} $, $\nu(q)> 0$ and
	\begin{equation}
	\label{eq:defPosDef}
	\frac{\partial^2 H}{\partial p^2}(q,p)[\xi,\xi]
	\ge \nu(q)\norm{\xi}^2,
	\qquad \forall p,\xi \in \mathbb{R}^{n}.
	\end{equation}
\end{definition}

\begin{remark}
	\label{rmk:notation-solutions}
	If $H$ is a Hamiltonian of classical type,
	by \eqref{eq:defPosDef} the inverse of 
	$(\partial H/\partial p)(q,\cdot)$ is well defined for all $q \in \overline{D}$.
	Hence, with a slight abuse of notation,
	we will say that a curve $q\colon [0,T] \to \overline{D}$
	is a solution of the Hamilton's equations if 
	$(q,p)\colon [0,T] \to \mathbb{R}^{2n}$ is a solution of \eqref{eq:hamiltonianSys},
	where $p$ is implicitly defined by
	\[
	\dot{q}(t) = \frac{\partial H}{\partial p}(q(t),p(t)),
	\quad \forall t \in [0,T].
	\]
\end{remark}

\begin{definition}
	\label{def:V&K}
	Let $H\colon \mathbb{R}^{2n} \to \mathbb{R}$ be a Hamiltonian function of classical type.
	We define the potential energy function
	$V\colon \mathbb{R}^{n} \to \mathbb{R}$ as 
	\[
	V(q) = H(q,0), \qquad \forall q \in \mathbb{R}^{n},
	\]
	and the kinetic energy function $K\colon \mathbb{R}^{2n} \to \mathbb{R}$ as
	\[
	K(q,p) = H(q,p) - V(q).
	\]
\end{definition}
By Definitions \ref{def:Hclassical} and \ref{def:V&K},
a Hamiltonian of classical type can be written as
\[
H(q,p) = K(q,p) + V(q),
\]
where $K(q,p)$ 
is even with respect to $p$,
strictly positive unless $p = 0$ and, for each $q \in \mathbb{R}^{n}$, 
\[
\frac{\partial^2 K}{\partial p^2}(q,p)[\xi,\xi]
\ge \nu(q)\norm{\xi}^2,
\qquad \forall p,\xi \in \mathbb{R}^{n}.
\]

\begin{definition}
	Let $H$ be a Hamiltonian function of classical type.
	A potential well for $H$ is an open set $D \subset \mathbb{R}^{n}$
	with boundary $\partial D$ of class $C^2$ such that,
	for some real number $E$, the followings hold:
	\begin{itemize}
		\item $V(q) < E$ on $D$;
		\item $V(q) = E$ on $\partial D$;
		\item $\nabla V(q) \ne 0$, for all $q \in \partial D$.
	\end{itemize}
\end{definition}

\begin{definition}
	Let $D \subset \mathbb{R}^{n}$ be a potential well for a Hamiltonian $H$, 
	with $V(q) = E$ on $\partial D$.
	A solution $(q(t),p(t))$ of Hamilton's equations for $H$ is called brake orbit
	if it has energy $E$ and 
	there exists $T > 0$ such that $q(t) \in D$ for $0 < t < T$, while $q(0),q(T)\in \partial D$.
\end{definition}
Following the notation of Remark \ref{rmk:notation-solutions},
we say that $q\colon [0,T] \to \overline{D}$ is a brake orbit if 
it is a solution of \eqref{eq:hamiltonianSys} with energy $E$,
$q(]0,T[) \subset D$ and $q(0),q(T) \in \partial D$.
\begin{remark}
	By the conservation law of the energy, if $(q(t),p(t)) $ is a brake orbit,
	then $p(0)$ and $p(T)$ must be zero.
	Since $H$ is of classical type (hence even in $p$),
	the solution can be continued so that it will be periodic.
	In other words, $q(t)$ oscillates back and forth along a curve in $D$ with endpoints in $\partial D$.
\end{remark}

This paper concerns 
the multiplicity of the brake orbits in a bounded potential well.
When the Hamiltonian is natural, hence given by
\begin{equation*}
%\label{eq:def-natHamiltonian}
H(q,p) = \frac{1}{2}\sum_{i,j = 1}^n a^{ij}(q) p_i p_j + V(q),
\end{equation*}
where $(a^{ij}(q))$ is a positive definite quadratic form on $\mathbb{R}^n$,
and the potential well is homeomorphic to the $n$-dimensional disk in $\mathbb{R}^n$,
H. Seifert conjectured the existence of at least $n$ brake orbits
(cf. \cite{Seifert1948}).
This conjecture has motivated an extensive literature on the subject
(e.g.
\cite{Ambrosetti1993,
	hu2020,
	Liu2021,
	Liu2014,
	Long2006,
	Rabinowitz1995,
	szulkin1989,
	Wang2020,
	Zhang2014})
and it has been recently proved in \cite{Giambo2020Seifert},
exploiting some partial results given by the authors in different previous papers
(cf. \cite{Giambo2009,Giambo2010,Giambo2015,giambo2015-nonLinAnal,Giambo2018,Giambo2004}).
This work points towards a generalization of Seifert's conjecture,
looking for the multiplicity of brake orbits when the Hamiltonian function is of classical type.
Indeed,
the present paper includes 
some results that will be exploited in the future 
to generalize the Seifert's conjecture for Hamiltonian systems of classical type.
Different generalizations of Seifert's conjecture have been analysed in the
last decades.
The papers with the most similar setting to the present one are
\cite{ruiz1977} and \cite{Weinstein1978},
where the existence of one brake orbit is proved
for Finsler mechanical systems and Hamiltonian systems of classical type,
respectively.

Let us introduce the following notation.
Let $(\mathcal{M},F)$ be a Finsler manifold of class $C^3$
and let ${\Omega}\subset \mathcal{M}$ be an open subset with boundary $\partial\Omega \in C^2$
(we refer to \cite{rund1959,Shen2001} for a background material about Finsler geometry).
\begin{definition}
	A curve $\gamma\colon [a,b] \to \overline\Omega$ is a Finsler geodesic chord if 
	\begin{itemize}
		\item it is a geodesic with respect to the Finsler metric $F$;
		\item $\gamma(a),\gamma(b) \in \partial\Omega$ and
		$\gamma(]a,b[) \subset \Omega$.
	\end{itemize}
	If $\dot{\gamma}(a)$ and $\dot{\gamma}(b)$ are orthogonal,
	with respect to the Finsler metric $F$, to
	$T_{\gamma(a)}\partial\Omega$ and $T_{\gamma(b)}\partial\Omega$ respectively,
	namely
	\begin{equation}
	\label{eq:conBounCond}
	\frac{\partial F^2}{\partial v} (\gamma(t),\dot{\gamma}(t))[\xi] 
	\coloneqq
	\frac{\diff}{\diff s}
	F^2(\gamma(t), \dot{\gamma}(t) + s\xi)\bigg|_{s = 0}
	= 0,
	\end{equation}
	for all $ \xi \in T_{\gamma(t)}\partial\Omega$,
	with $t = a,b$, 
	then $\gamma$ is called orthogonal Finsler geodesic chord.
\end{definition}

This paper reduces the multiplicity problem of the brake orbits
in a bounded potential well of a Hamiltonian of classical type
to the related problem of orthogonal geodesic chords
in a Finslerian manifold with smooth boundary.

The last ingredient to state our main theorem is 
the notion of concavity of a Finsler manifold with boundary
(cf. \cite{Bartolo2011} for the notion of convexity).
We say that $\overline{\Omega}$ is concave with respect to the Finsler metric $F$ 
if every geodesic which is tangent to $\partial\Omega$ on one point $q$
lies inside $\Omega$ on a neighbourhood of $q$.
More formally,
since $\overline{\Omega}$ is of class $C^2$, there exists a function 
$\psi\colon \mathcal{M} \to \mathbb{R}$ of class $C^2$ such that
$\psi(\Omega) \subset ]0,\infty[$,
$\psi(\partial\Omega) = 0$ and
$(\partial\psi/\partial q )(q)\ne 0$ for all $q \in \partial\Omega$.

\begin{definition}
	\label{def:Finsler-concavity}
	The manifold with boundary $\overline{\Omega}$ is strongly concave if 
	for all $q \in \partial\Omega$ we have
	\begin{equation}
	\label{eq:def-Concave}
	H_{\psi}(q,v)[v,v]
	\coloneqq \frac{\diff^2}{\diff s^2}(\psi\circ\gamma)(0)
	> 0,
	\quad \forall v \in T_q\partial\Omega,
	\ v \ne 0,
	\end{equation}
	where $\gamma\colon (-\epsilon,\epsilon) \to \mathcal{M}$
	is the unique geodesic such that $\gamma(0) = q$ and $\dot{\gamma}(0) = v$.
\end{definition}

Now we are ready to state our main theorem.
\begin{theorem}
	\label{teo:main}
	Let $D \subset \mathbb{R}^{n}$ be a potential well for a Hamiltonian $H$ of classical type.
	If $\overline{D}$ is compact,
	there exists a open set ${\Omega}\subset D$,
	with a Finsler metric $F$ on $\overline{\Omega}$, such that the following statements hold:
	\begin{itemize}
		\item $\overline{\Omega}\subset  D$;
		\item $\partial \Omega$ is of class $C^2$;
		\item $\overline{\Omega}$ is homeomorphic to $\overline{D} $;
		\item $\overline{\Omega} $ is strongly concave with respect to the Finsler metric $F$;
		\item if $\gamma\colon [0,1]\to \overline{\Omega}$ is an orthogonal Finsler geodesic chord,
		then there exists $[\alpha,\beta]\supset [0,1] $ and
		an unique continuous extension $\hat{\gamma}\colon [\alpha,\beta]\to \overline{D}$ of $\gamma$
		such that
		\begin{itemize}
			\item $\hat{\gamma}$ is a geodesic in $]\alpha,\beta[$;
			\item up to a time reparametrization,
			$\hat\gamma\colon [\alpha,\beta] \to \overline{D}$ is a brake orbit,
			namely there exists a diffeomorphism
			$\sigma\colon [0,T] \to [\alpha,\beta]$
			such that $q = \gamma \circ \sigma\colon [0,T] \to \overline{D}$
			is a brake orbit.
		\end{itemize}
	\end{itemize}
\end{theorem}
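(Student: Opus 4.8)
The plan is to reduce the problem to a geodesic problem by a Maupertuis--Jacobi construction and then to cope with the fact that the natural Jacobi metric degenerates exactly on $\partial D$, where the brake orbits stop. First I would fix the energy $E$ and, on the open well $D=\{V<E\}$, build a Finsler metric $F$ by convex duality. For $q\in D$ one has $E-V(q)>0$, and since $p\mapsto K(q,p)$ is even, strictly convex and vanishes only at $p=0$, the body $B_q=\{p : H(q,p)\le E\}=\{p : K(q,p)\le E-V(q)\}$ is compact, centrally symmetric, strictly convex, with $0$ in its interior. Letting $F^*(q,\cdot)$ be its Minkowski gauge and $F(q,\cdot)$ the dual norm, central symmetry of $B_q$ yields reversibility of $F$, while the positive definiteness in \eqref{eq:defPosDef} yields strong convexity of the indicatrix, so that $F$ is a genuine Finsler metric on $D$. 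The Maupertuis principle then identifies the non-constant $F$-geodesics, up to an orientation-preserving reparametrization, with the energy-$E$ solutions of \eqref{eq:hamiltonianSys} lying in $D$, the time change degenerating at the turning points where $V\to E$.

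Because $F$ degenerates on $\partial D$, I would \emph{not} take $\Omega$ to be a sublevel set of $V$: brake orbits do not, in general, cross such level sets $F$-orthogonally. Instead I would exploit the energy-$E$ trajectories that reach $\partial D$. By conservation of energy such a trajectory arrives on $\partial D$ with vanishing velocity, and since its acceleration does not vanish there (being forced by $-\nabla V\ne0$), it meets $\partial D$ orthogonally and brakes. Using $\nabla V\ne0$ on $\partial D$ and the compactness of $\overline D$, I would show that these $F$-orthogonal geodesics issued from $\partial D$ foliate a one-sided collar of $\partial D$ in $\overline D$, i.e. set up Fermi-type coordinates for $F$ based at $\partial D$ (admissible since $\partial D$ lies at finite $F$-distance). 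I would then let $\Omega$ be the region cut out by the equidistant hypersurface $\partial\Omega=\{q : d_F(q,\partial D)=\epsilon\}$, for $\epsilon>0$ small. As a small inward normal push of $\partial D$, this $\partial\Omega$ is of class $C^2$, satisfies $\overline\Omega\subset D$, and the normal flow furnishes the homeomorphism $\overline\Omega\cong\overline D$; moreover the distance-realizing geodesics from $\partial D$ meet $\partial\Omega$ orthogonally in the sense of \eqref{eq:conBounCond}. Strong concavity would then be checked by evaluating $H_{\psi}(q,v)[v,v]$ in \eqref{eq:def-Concave} for a defining function $\psi$ adapted to $d_F(\cdot,\partial D)$: via the second-variation formula this reduces, for $\epsilon$ small, to the geodesic curvature of $\partial D$, whose sign is forced to be the right one by $\nabla V\ne0$ --- the analytic expression of the fact that energy-$E$ trajectories bend towards smaller $V$.

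Finally, given an orthogonal Finsler geodesic chord $\gamma\colon[0,1]\to\overline\Omega$, I would take $\hat\gamma$ to be the maximal $F$-geodesic extending $\gamma$. At $\gamma(0)$ and $\gamma(1)$ the velocities $\dot\gamma(0),\dot\gamma(1)$ are $F$-orthogonal to $\partial\Omega$ and point outward, hence they are the distance-realizing directions to $\partial D$ in the Fermi coordinates; prolonging $\gamma$ along them therefore reaches $\partial D$ at two parameters $\alpha<0$ and $\beta>1$, giving $\hat\gamma\colon[\alpha,\beta]\to\overline D$, a single geodesic on $]\alpha,\beta[$ with $\hat\gamma(\alpha),\hat\gamma(\beta)\in\partial D$ and $\hat\gamma(]\alpha,\beta[)\subset D$. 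Uniqueness of $\hat\gamma$ follows from uniqueness of geodesics with prescribed Cauchy data. Undoing the Maupertuis reparametrization by a diffeomorphism $\sigma\colon[0,T]\to[\alpha,\beta]$ returns physical time; since the endpoints lie on $\partial D=\{V=E\}$ the momentum vanishes there, so $q=\hat\gamma\circ\sigma$ solves \eqref{eq:hamiltonianSys} with energy $E$, stays in $D$ on $]0,T[$, and has both endpoints on $\partial D$, i.e. it is a brake orbit.

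The hard part will be the two steps that cross the degenerate boundary $\partial D$, where $F$, its geodesic spray, and its exponential map all degenerate precisely at the turning points of the brake orbits. Showing that the $F$-orthogonal geodesics from $\partial D$ do foliate a genuine collar, that the equidistant hypersurface $\partial\Omega$ is well defined and of class $C^2$, and that $F$-orthogonality at $\partial\Omega$ matches the zero-velocity, $\partial D$-orthogonal braking, will require a careful blow-up and asymptotic analysis of $F$-geodesics as $V\to E$. Once that analysis is available, the concavity sign is dictated by $\nabla V\ne0$ and the final reparametrization is routine.
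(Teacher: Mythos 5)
Your strategy coincides with the paper's: the metric you build by duality from the gauge of $B_q=\{p:H(q,p)\le E\}$ is exactly the paper's $F=\sqrt{G}$ with $G$ the Legendre transform of the degree-two homogeneous Hamiltonian $U$ normalized to $1$ on $\Sigma$; your equidistant hypersurface $\{d_F(\cdot,\partial D)=\epsilon\}$ is the paper's level set $\psi^{-1}(\hat\delta)$ of the minimal Finsler energy to the boundary; your Fermi collar is the paper's chart $(t,Q)\mapsto q(t,Q)$ built from the Hamiltonian flow issuing from $\partial D$ with zero momentum; and the extension of an orthogonal chord is in both cases the concatenation with the unique distance-realizing geodesic to $\partial D$. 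So there is no divergence of route to report. The problem is that you explicitly defer the entire technical core --- ``a careful blow-up and asymptotic analysis of $F$-geodesics as $V\to E$'' --- and that analysis \emph{is} the proof. Concretely, three things are missing. First, the $C^2$ regularity of the equidistant hypersurface: since $F$ degenerates on $\partial D$, you cannot invoke standard Fermi coordinates; the paper gets the collar from the Hamiltonian side (the transversality $\langle \partial^2_pH(Q,0)\nabla V(Q),\nabla V(Q)\rangle\ge\nu_1\norm{\nabla V(Q)}^2>0$), proves uniqueness of the minimizer $\gamma_y$ for $y$ near $\partial D$, and then needs the nontrivial identity $\sqrt{\psi(y)}\,\varphi(y,\dot q(t_y,Q_y))\,\dot q(t_y,Q_y)=-\dot\gamma_y(0)$ with $\varphi$ of class $C^1$ to conclude $\psi\in C^2$. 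Second, the finiteness of the physical time $T$ after undoing the Maupertuis reparametrization is not ``routine'': the time-change factor blows up like $(E-V)^{-1}$ at the brake points, and the paper needs Weinstein's rim/band/core argument to bound $T$.

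The one place where your sketch is not merely incomplete but points in a wrong direction is the concavity step. You claim the second variation ``reduces, for $\epsilon$ small, to the geodesic curvature of $\partial D$.'' It does not: the shape of $\partial D$ is irrelevant to the sign. In the paper's computation the Hessian $H_\psi(y,\xi)[\xi,\xi]$ splits into a term $I_1(y)$, containing all contributions from the variation of the boundary data (including anything one could call curvature of $\partial D$), which is shown to be $o(\sqrt{E-V(y)})$, and a term $I_2(y)=\frac{\partial\Gamma}{\partial v}(y,\dot q(t_y,Q_y))[\ddot\zeta(0)]$ with $\ddot\zeta(0)\to-\partial^2_pH(y,0)\nabla V(y)$, which is bounded below by a positive multiple of $\sqrt{E-V(y)}$. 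The strong concavity is thus forced purely by the degeneration of the metric together with $\nabla V\ne0$ on $\partial D$ (your parenthetical ``trajectories bend towards smaller $V$'' is the correct mechanism), and establishing it requires exactly the quantitative asymptotics \eqref{eq:bound-G}, \eqref{eq:dotq-nearBoundary} and $\diff t_y[w]\to0$ that your proposal postpones. Until those estimates are supplied, the concavity of $\overline\Omega$ --- the property the whole theorem is built to deliver --- is unproven.
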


Theorem \ref{teo:main} reduces the study of multiple brake orbits 
of a Hamiltonian of classical type
to the study of multiple orthogonal geodesic chords
in a strongly concave Finsler manifold with boundary.
Given a bounded potential well $D$, we will construct a Finsler manifold
$(\overline{\Omega},F)$, with $\overline{\Omega}\subset D$, such that
there exists a bijection between the brake orbits in $D$ 
and the orthogonal geodesic chords in $(\overline{\Omega},F)$.
This result generalizes the one presented in \cite{Giambo2004}
for natural Hamiltonian functions,
where the Finsler metric is actually a Riemannian one.

Some results about the multiplicity of orthogonal geodesic chords
in the case of convex Finsler manifolds with boundary
and some generalizations can be found, 
for instance,
in \cite{Corona2020JFPT,corona2021DCDS,Corona2020Symmetry}.

\subsection{Notation}
If $f$ is a real-valued function defined on $\mathbb{R}^{2n}$,
then $\partial f/\partial q$ and $\partial f/\partial p$ will denote the differentials of $f$ with respect to $q$ and $p$ respectively.
We denote by $f'$ the differential of $f$, hence $f'(q,p) = (\partial f/\partial q, \partial f/\partial p)$.
We will denote by $v$ the conjugate variable of $p$ via Legendre transform of a function.
Hence, $\partial f/\partial v$ will denote the partial derivative with respect to $v$.
We denote by $\sprod{\cdot,\cdot}\colon \mathbb{R}^{2n}\to\mathbb{R}$ the euclidean scalar product
and $\norm{\cdot}\colon \mathbb{R}^{2n} \to \mathbb{R}$ the euclidean norm.
The euclidean distance function between two points $q_1$ and $q_2
$ will be denoted by $\text{dist}(q_1,q_2)$.
We denote by $J \in M_{2n \times 2n}(\mathbb{R})$ the symplectic matrix
\[
J = 
\begin{pmatrix}
0 & I_{n \times n} \\ -I_{n \times n} & 0
\end{pmatrix}
.
\]
Let $z\colon [0,T] \to \mathbb{R}^{2n}$ be a curve with
$z(t) = (q(t), p(t))$.
Using this notation, \eqref{eq:hamiltonianSys} can be written as
\[
\dot{z}(t) = J H'(z(t)).	
\]
For every compact interval $I \subset \mathbb{R}$
and every $A \subset \mathbb{R}^n$,
we denote by 
$W^{1,2}(I,A)$ the Sobolev space
\[
W^{1,2}(I,A) = 
\left\{
\gamma\colon I \to A:	
\gamma \text{ is absolutely continuous and } \dot{\gamma} \in L^2(I,\mathbb{R}^n)
\right\}.
\]

\section{The Jacobi-Finsler metric}
\label{sec:JF-metric}
Let $H$ be a Hamiltonian of classical type and
$D\subset \mathbb{R}^{n} $ be the (open) potential well such that $V(q)\equiv E$ on $\partial D$ and $\overline{D}$ is compact.
In this section, following the same construction of \cite{Weinstein1978},
we endow the potential well with a Finsler metric whose geodesics are linked to the solution of the Hamiltonian system via time reparametrization.
Let us define
\begin{equation*}
%\label{eq:def-Sigma}
\Sigma = \left\{(q,p) \in \mathbb{R}^{2n}: q \in D, H(q,p) = E\right\}.
\end{equation*}
If $(q_0,p_0) \in \Sigma$, then $p_0 \ne 0$ and this implies that $H'(q_0,p_0)$ is different from zero.
As a consequence, $\Sigma$ is a regular level surface for $H$.
\begin{lemma}
	\label{lem:fromH-toU}
	There exists a function $U\colon D\times \mathbb{R}^n \to \mathbb{R}$ such that
	\begin{itemize}
		\item $U$ is of class $C^1$;
		\item $U$ is of class $C^2$ on $D \times (\mathbb{R}^n\backslash\{0\})$;
		\item $U(q,p)$ is even and homogeneous of degree $2$ in $p$;
		\item $\Sigma = U^{-1}(1)$ and $\Sigma$ is a regular level surface for $U$.
	\end{itemize}
\end{lemma}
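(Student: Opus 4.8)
The plan is to homogenize the kinetic energy $K$ in the momentum variable so that the energy surface $\Sigma$ becomes the unit level set. Set $c(q) = E - V(q)$, which is strictly positive for $q \in D$ because $D$ is a potential well. For fixed $q \in D$ and $p \neq 0$, consider the ray function $s \mapsto K(q,sp)$ on $[0,\infty)$. Since $K(q,\cdot)$ is even and $C^2$ we have $K(q,0)=0$ and $(\partial K/\partial p)(q,0)=0$, so Taylor's formula together with the Hessian bound $(\partial^2 K/\partial p^2)(q,w)[\xi,\xi] \ge \nu(q)\norm{\xi}^2$ gives $\frac{d}{ds}K(q,sp) = \sprod{(\partial K/\partial p)(q,sp),p} > 0$ for $s>0$ and $K(q,sp) \ge \tfrac12 \nu(q)s^2\norm{p}^2 \to \infty$. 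Hence $s \mapsto K(q,sp)$ is a strictly increasing bijection of $[0,\infty)$ onto itself, and there is a unique $s = s(q,p) > 0$ with $K(q,s(q,p)\,p) = c(q)$.

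Because $\frac{d}{ds}K(q,sp)$ is strictly positive at the solution, the implicit function theorem shows that $s(q,p)$ is of class $C^2$ on $D \times (\mathbb{R}^n \setminus \{0\})$, since $K$ is $C^2$ there. Rescaling $p \mapsto \lambda p$ (with $\lambda>0$) replaces the solution by $s(q,p)/\lambda$, and evenness of $K$ in $p$ gives $s(q,\lambda p) = s(q,p)/\lambda$ and $s(q,-p)=s(q,p)$. I would then define
\[
U(q,p) = \frac{1}{s(q,p)^2} \quad (p \neq 0), \qquad U(q,0)=0.
\]
By construction $U$ is even and positively homogeneous of degree $2$ in $p$, of class $C^2$ on $D\times(\mathbb{R}^n\setminus\{0\})$, and $U(q,p)=1$ holds exactly when $s(q,p)=1$, i.e. when $K(q,p)=c(q)$, i.e. when $(q,p)\in\Sigma$; thus $\Sigma = U^{-1}(1)$. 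Regularity of this level set is then immediate from Euler's identity: homogeneity of degree $2$ gives $\sprod{(\partial U/\partial p)(q,p),p} = 2U(q,p) = 2$ on $\Sigma$, so $\partial U/\partial p\neq 0$ there and $U'\neq 0$ on $\Sigma$.

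The one genuinely delicate point, and the step I expect to be the main obstacle, is the $C^1$ regularity of $U$ across the zero section $\{p=0\}$, where the implicit function theorem no longer applies. First I would establish the two-sided bound $\frac{\nu(q)}{2c(q)}\norm{p}^2 \le U(q,p) \le \frac{1}{\delta(q)^2}\norm{p}^2$: the lower bound follows from the quadratic growth estimate above, which forces $s(q,p)\norm{p} \le \sqrt{2c(q)/\nu(q)}$; the upper bound follows from the fact that, since $K(q,\cdot)$ is continuous with $K(q,0)=0 < c(q)$, the set $\{w : K(q,w)=c(q)\}$ stays at distance at least some $\delta(q)>0$ from the origin (uniformly on compact $q$-neighbourhoods), so $s(q,p)\norm{p}\ge\delta(q)$. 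In particular $U(q,p)=o(\norm{p})$, which, together with $U\equiv 0$ on the zero section, shows that $U$ is differentiable at every $(q,0)$ with vanishing gradient.

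To upgrade this to continuity of all first partials, I would use homogeneity once more: on $D\times(\mathbb{R}^n\setminus\{0\})$ the derivatives $\partial U/\partial p$ and $\partial U/\partial q$ are positively homogeneous in $p$ of degrees $1$ and $2$ respectively, so bounding them on the compact sphere $\norm{p}=1$ over a compact $q$-neighbourhood and rescaling yields $\norm{(\partial U/\partial p)(q,p)} \le C\norm{p}$ and $\abs{(\partial U/\partial q)(q,p)} \le C\norm{p}^2$; hence every first partial extends continuously by $0$ to $\{p=0\}$. A standard removable-singularity argument, combining continuity of $U$ with continuity of the partials up to the zero section and integrating along segments, then gives that $U$ is of class $C^1$ on all of $D\times\mathbb{R}^n$ with these extended derivatives, completing the proof. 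I expect the bookkeeping for the joint $(q,p)$ continuity of the derivatives near the zero section, rather than any single estimate, to be where most of the care is needed.
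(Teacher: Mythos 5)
Your proof is correct and takes essentially the same route as the paper: the paper defines $U$ as the unique degree-$2$ homogeneous function equal to $1$ on the convex, origin-symmetric level set $\left\{p: H(q,p)=E\right\}$ (the squared Minkowski gauge), which is exactly your $1/s(q,p)^2$ --- indeed the paper later writes $U(q,p)=\norm{p}^{2}/\omega^{2}(q,p/\norm{p})$ in the proof of Lemma \ref{lem:degeneration-on-pD}, with $\omega(q,p/\norm{p})=s(q,p)\norm{p}$. You merely supply the details (implicit function theorem regularity, the two-sided bounds, and the $C^1$ extension across $p=0$) that the paper's short proof leaves implicit.
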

\begin{proof}
	Since $H$ is convex with respect to $p$,
	for every $q_0 \in D$, the set
	\[
	\left\{p: H(q_0,p) = E\right\}
	\]
	is a 
	nonempty, convex, compact hypersurface in $\mathbb{R}^{n} $, symmetric about the origin.
	As a consequence, there exists a unique function
	$U\colon \mathbb{R}^{2n} \to \mathbb{R}$ 
	which is homogeneous of degree $2$ in $p$ and which is identically $1$ on $\Sigma$.
	Since $H$ is of class $C^2$, so it is $U$ on $D \times (\mathbb{R}^n\backslash\{0\})$.
	Moreover, the homogeneity of degree $2$ in $p$ implies both the $C^1$-regularity of $U$ and
	that $U'(q,p) \ne 0$ for all $(q,p) \in \Sigma$.
\end{proof}

Since $\Sigma$ is a regular level surface for the Hamiltonian functions $H$ and $U$,
we have the following result.
\begin{lemma}
	\label{lem:timeParam-HtoU}
	A curve
	$(q,p)\colon [0:T] \to D \times \mathbb{R}^n $ is a solution of Hamilton's equations for $H$
	if and only if
	it is a solution of Hamilton's equations for U,
	up to time reparametrization.
\end{lemma}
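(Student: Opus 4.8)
The plan is to exploit the fact that $H$ and $U$ share the common regular level surface $\Sigma$, so that their Hamiltonian vector fields point in the same direction along $\Sigma$ and differ only by a nowhere-vanishing scalar factor; the required time reparametrization is then obtained by integrating this factor along each trajectory.

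First I would show that the two differentials are proportional on $\Sigma$. At every $z=(q,p)\in\Sigma$ both $H'(z)$ and $U'(z)$ are nonzero (the former because $p\ne 0$ there, as recalled above; the latter by the last item of Lemma~\ref{lem:fromH-toU}), and both are normal to the hyperplane $T_z\Sigma$. Since $\Sigma$ has codimension one, these two normals are parallel, so there is a scalar $\lambda(z)\ne 0$ with $H'(z)=\lambda(z)\,U'(z)$. Taking the scalar product with $U'(z)$ gives the explicit expression $\lambda(z)=\sprod{H'(z),U'(z)}/\norm{U'(z)}^2$; because $p\ne 0$ on $\Sigma$, the function $U$ is of class $C^2$ there, so $\lambda$ is a $C^1$, nowhere-vanishing function on $\Sigma$.

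Next I would record that both flows leave $\Sigma$ invariant: for a solution $z$ of $\dot z=JH'(z)$ one has $\frac{\diff}{\diff t}H(z)=\sprod{H'(z),JH'(z)}=0$ by the antisymmetry of $J$, and likewise $U$ is conserved along solutions of $\dot w=JU'(w)$. Hence a curve that lies on $\Sigma$ (equivalently, a solution of energy $E$ for $H$, or of level $1$ for $U$) remains on $\Sigma$, and along such a curve the identity $H'=\lambda U'$ yields $JH'=\lambda\,JU'$; that is, the two Hamiltonian vector fields are parallel on $\Sigma$, differing only by the factor $\lambda$.

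Finally I would build the reparametrization. Given a solution $z\colon[0,T]\to\Sigma$ of Hamilton's equations for $H$, the map $t\mapsto\lambda(z(t))$ is continuous and never zero, hence of constant sign; defining $\sigma$ by $\sigma(0)=0$ and $\sigma'(t)=\lambda(z(t))$ produces a strictly monotone $C^1$ diffeomorphism of $[0,T]$ onto an interval $[0,S]$, and the chain rule shows that $w=z\circ\sigma^{-1}$ satisfies $\dot w=JU'(w)$. The converse is identical with the roles of $H$ and $U$ interchanged and $\lambda$ replaced by $1/\lambda$. The main obstacle is not analytic depth but careful bookkeeping: one must verify that $\lambda$ is genuinely $C^1$ and nonvanishing, so that the reparametrizing ODE is well posed and yields an honest diffeomorphism, and one must keep track of which level set each trajectory inhabits, since the equivalence is valid only for curves on the common surface $\Sigma$ and not across distinct energy levels.
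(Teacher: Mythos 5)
Your proposal is correct and follows essentially the same route as the paper: the paper defers the proof to Weinstein's Lemma 2.1, whose argument is precisely that $H$ and $U$ share the regular level surface $\Sigma$, so their gradients are parallel there and the flows coincide up to integrating the scalar factor $\sprod{H',U'}/\norm{U'}^2$ — the very quantity the paper itself computes in Remark \ref{rmk:fromUtoH}. Your added caveats (nonvanishing and regularity of the factor, restriction to the common level set) are exactly the right bookkeeping.
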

\begin{proof}
	See \cite[Lemma 2.1]{Weinstein1978}.
\end{proof}
\begin{remark}
	\label{rmk:fromUtoH}
	Let $x = (q,p)\colon [0,S] \to \mathbb{R}^{2n} $ a solution of the Hamilton's equations with Hamiltonian $U$, hence
	\[
	\frac{\diff x}{\diff s} = J U'(x(s)),
	\]
	By Lemma \ref{lem:timeParam-HtoU},
	there exists a function $\lambda\colon [0,T] \to [0,S]$ such
	that $z = x \circ \lambda\colon [0,T] \to \mathbb{R}^{2n}$
	is a solution of Hamilton's equations with Hamiltonian $H$, hence
	\[
	\frac{\diff z}{\diff t} = JH'(z(t)).
	\]
	As a consequence, we obtain
	\begin{multline*}
	JH'(x(\lambda(t))) = JH'(z(t))
	= \frac{\diff z}{\diff t}(t) 
	= \frac{\diff \lambda}{\diff t}(t)
	\frac{\diff x}{\diff s}(\lambda(t)) \\
	= \frac{\diff \lambda}{\diff t}(t) JU'(x(\lambda(t))).
	\end{multline*}
	Imposing that $\lambda$ is an orientation preserving reparametrization,
	we obtain
	\[
	\frac{\diff \lambda}{\diff t}(t) = 
	\frac{ \sprod{H'(z(t)),U'(z(t))} }{\norm{U'(z(t))}^{2}}=
	\frac{\norm{H'(z(t))}}{\norm{U'(z(t))}}.
	\]
	Hence we have
	\begin{equation}
	\label{eq:fromZtoX}
	\frac{\diff x}{\diff s}(s) =
	\frac{\norm{U'(z(t))}}{\norm{H'(z(t))}}
	\
	\frac{\diff z}{\diff t}(t),
	\quad \text{with } \lambda(t) = s.
	\end{equation}
	The inverse function of $\lambda$ satisfies
	\[
	\frac{\diff(\lambda^{-1})}{\diff s}(s) = 
	\frac{\norm{U'(x(s))}}{\norm{H'(x(s))}},
	\]
	hence we can obtain the time reparametrization solving 
	the following integral
	\begin{equation}
	\label{eq:timeRep}
	t(s) = \int_{0}^{s}
	\frac{\norm{U'(x(\sigma))}}{\norm{H'(x(\sigma))}}\diff\sigma.
	\end{equation}
	
\end{remark}

\bigskip
The following result provides the Finsler metric we will employ in our study.
\begin{lemma}
	\label{lem:def-G}
	Let $G\colon  D\times \mathbb{R}^n\to \mathbb{R}$ be the Legendre transform of $U$
	with respect to $p$, hence
	\begin{equation*}
	\label{eq:defG}
	G(q,v) = \max_{p \in \mathbb{R}^{n} } 
	\left(\sprod{v, p} - U(q,p)\right),
	\end{equation*}
	and define
	$\mathcal{L}\colon D\times\mathbb{R}^n\to D\times\mathbb{R}^n$ as
	\begin{equation}
	\label{eq:defLcal}
	\mathcal{L}(q,p) =
	\left(q,\frac{\partial U}{\partial p}(q,p)\right) =
	(q,v).
	\end{equation}
	Then, 
	the function 
	$F\colon  D\times \mathbb{R}^n\to \mathbb{R}$, defined as 
	$F = \sqrt{G(q,v)}$,
	is a Finsler metric on $D$.
	Moreover, 
	a curve $q\colon [0,S]\to D$ is a Finsler geodesic parametrized by arc length if and only if
	$(q(s),p(s)) = \mathcal{L}^{-1}(q(s),\dot{q}(s)) $
	is a solution of Hamilton's equations with Hamiltonian $U$
	and $U(q(s),p(s)) \equiv 1$.
\end{lemma}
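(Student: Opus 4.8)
The plan is to verify the two assertions separately, leaning throughout on the facts that the Legendre transform is an involution on strictly convex functions and that it interacts cleanly with homogeneity.

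\emph{That $F$ is a Finsler metric.} First I would record the structural consequences of the homogeneity and convexity of $U$. Since $U(q,\cdot)$ is homogeneous of degree $2$, the substitution $p = \lambda p'$ in the definition of $G$ shows that $G(q,\cdot)$ is homogeneous of degree $2$ as well, so that $F = \sqrt G$ is positively homogeneous of degree $1$; the evenness of $U$ in $p$ transfers to $G$, giving $F(q,-v) = F(q,v)$. Because $U(q,\cdot)$ is strictly convex with minimum $U(q,0)=0$, its Legendre transform satisfies $G(q,0)=0$ with $v=0$ as its minimizer, whence $G(q,v) > 0$ and $F(q,v) > 0$ for $v \ne 0$. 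The fundamental tensor of $F$ is $\tfrac12\,\partial^2 G/\partial v^2$, and at corresponding points $\mathcal L(q,p) = (q,v)$ Legendre duality gives $\partial^2 G/\partial v^2 = (\partial^2 U/\partial p^2)^{-1}$, which is positive definite by \eqref{eq:defPosDef}; hence $F$ is strongly convex. The required regularity of $F^2=G$ on $D\times(\mathbb R^n\setminus\{0\})$ follows from that of $U$ there, transferred through the implicit function theorem applied to $v = \partial U/\partial p$.

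\emph{The Legendre dictionary.} Next I would collect the duality relations to be used. Strict convexity and quadratic growth of $U(q,\cdot)$ guarantee that the maximum defining $G$ is attained at a unique $p$, that $\mathcal L$ is a diffeomorphism of $D\times(\mathbb R^n\setminus\{0\})$ onto itself, and that $\mathcal L^{-1}(q,v) = (q,\partial G/\partial v(q,v))$, so $p = \partial G/\partial v$ at the point where $v = \partial U/\partial p$. Differentiating $G(q,v) = \sprod{v,p} - U(q,p)$ in $q$ and using $v = \partial U/\partial p$ yields the companion identity $\partial G/\partial q = -\partial U/\partial q$. The identity I regard as central is obtained from Euler's theorem for the degree-$2$ homogeneous function $U$: since $\sprod{p,\partial U/\partial p} = 2U$, at corresponding points one gets $G = \sprod{v,p} - U = 2U - U = U$, so that $G(q,v) = U(q,p)$. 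This is exactly what will let me match the two energy constraints.

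\emph{Geodesics versus Hamiltonian flow.} I would then invoke the standard characterization that a curve is a Finsler geodesic parametrized by arc length if and only if it has unit speed, $F(q,\dot q)\equiv 1$, and is a critical point of the energy functional $\int G(q,\dot q)\diff s$, that is, satisfies the Euler--Lagrange equations of the Lagrangian $G$. Under $\mathcal L$, with $p = \partial G/\partial v(q,\dot q)$, the equation $\frac{\diff}{\diff s}(\partial G/\partial v) = \partial G/\partial q$ becomes, via the identities above, Hamilton's equations $\dot q = \partial U/\partial p$ and $\dot p = -\partial U/\partial q$ for $U$ at $(q,p) = \mathcal L^{-1}(q,\dot q)$; conversely these equations run backwards to the Euler--Lagrange equations. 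Finally the unit-speed condition $F(q,\dot q)=1$ reads $G(q,\dot q)=1$, which by $G=U$ is precisely $U(q,p)=1$. Both implications read off from this equivalence.

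\emph{Main obstacle.} The computations are routine once the Legendre dictionary is in place; the part needing care is setting up that dictionary cleanly, in particular that $\mathcal L$ is a global diffeomorphism off the zero section and that the reduced regularity of $U$ (only $C^1$ across $p=0$, $C^2$ elsewhere) does not interfere. This is harmless here because every curve under consideration satisfies $U(q,p)=1$ and therefore stays in the region $p\ne 0$, $v\ne 0$, where $U$, $G$, $\mathcal L$ and $F^2$ are all $C^2$; I would state this localization explicitly before invoking the second-order Euler--Lagrange and Hamiltonian formalisms.
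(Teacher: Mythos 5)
Your proposal is correct and follows essentially the same route as the paper: the paper's proof likewise derives the Finsler property from the convexity and degree-$2$ homogeneity of $G$, notes that $\mathcal L$ is a diffeomorphism because $\partial^2 U/\partial p^2>0$ off $p=0$, and then attributes the geodesic--Hamiltonian equivalence to the standard Legendre-transform dictionary (citing Rund). You have simply written out the details (the identity $G=U$ at corresponding points via Euler's theorem, the Euler--Lagrange/Hamilton correspondence, and the localization away from the zero section) that the paper leaves to the reference.
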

\begin{proof}
	By the convexity of $U$,
	the function $G$ is well defined, convex and homogeneous of degree 2 in $v$.
	Moreover, $G$ is of class $C^2$ on $D \times \mathbb{R}^{n}\backslash\{0\}$,
	while it is of class $C^1$ on $D \times \mathbb{R}^{n}$.
	Thus, the function $F\colon D \times \mathbb{R}^n \to R$ defined as
	$F(q,v) = \sqrt{G(q,v)} $ is a Finsler metric on $D$.
	Since
	\[
	\frac{\partial^2 U}{\partial p^2}(q,p) > 0,
	\qquad \forall q \in D,\ \forall p \in \mathbb{R}^{n} \backslash \left\{0\right\},
	\]
	the map
	$(\partial U/\partial p)(q,\cdot) $ is invertible, 
	thus $ \mathcal{L}$ is a diffeomorphism 
	and it is homogeneous of degree $1$ with respect to $p$.
	The equivalence between the Finsler geodesics parametrized by arc length
	and the solutions of \eqref{eq:hamiltonianSys} with energy $E$ is a 
	direct consequence of the Legendre transform
	(see, for instance, \cite[Chapter I, p. 22]{rund1959}).
\end{proof}

By Lemma \ref{lem:timeParam-HtoU} and Lemma \ref{lem:def-G},
if $q$ is a geodesic in $D$,
then $\mathcal{L}^{-1}(q,\dot{q})$ is a solution of Hamilton's equations with the original Hamiltonian $H$,
up to a time reparametrization.
As a consequence, finding a Finsler geodesic in $D$
is equivalent to finding a solution of \eqref{eq:hamiltonianSys} in $D$
with energy $E$.
The following result provides the reparametrization that links the 
geodesics to the solutions of \eqref{eq:hamiltonianSys},
combining the time reparametrization \eqref{eq:timeRep}
with the Legendre transform defined in \eqref{eq:defLcal}.
For the sake of presentation, we use the following notation
\begin{equation}
\label{eq:def-phi}
\phi(q,v)\coloneqq
\frac{\norm{U'(\mathcal{L}^{-1}(q,v))}}
{\norm{H'(\mathcal{L}^{-1}(q,v))}},
\quad \forall (q,v) \in \mathcal{L}(\Sigma).
\end{equation}
\begin{remark}
	The function $\phi$ given by \eqref{eq:def-phi} is well defined.
	Indeed, since $\overline{\Sigma}$ is compact,
	$\frac{\partial V}{\partial q}(q) \ne 0$ for all $q\in \partial D$
	and $H$ is strictly convex with respect to $p$,
	there exist two constants $h_1,h_2$ such that
	\begin{equation}
	\label{eq:bound-Hprime}
	0 < h_1 \le \norm{H'(q,p)} \le h_2,
	\quad \forall (q,p) \in \overline{\Sigma}.
	\end{equation}
\end{remark}
\begin{lemma}
	\label{lem:fromGtoH}
	Let $\gamma\colon [0,1] \to D$ be a Finsler geodesic
	such that
	\[
	G(\gamma(s),\dot{\gamma}(s)) = c_\gamma,
	\quad \forall s \in [0,1],
	\]
	and let $\lambda\colon [0,T] \to [0,1]$ be the reparametrization such that
	$\gamma \circ \lambda$ is a solution of \eqref{eq:hamiltonianSys}
	with energy $E$.
	Then the inverse of $\lambda$ is given by
	\begin{equation*}
	\label{eq:TimeRep-Finsler}
	t(s) = \sqrt{c_\gamma}
	\int_{0}^s
	\phi\left(\gamma(\sigma),\frac{\dot{\gamma}(\sigma)}{\sqrt{c_\gamma}}\right)
	\diff\sigma.
	\end{equation*}
\end{lemma}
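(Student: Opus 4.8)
The plan is to factor the reparametrization $\lambda$ into two independent pieces: a purely algebraic rescaling that turns $\gamma$ into a unit-speed Finsler geodesic, followed by the dynamical time reparametrization already computed in Remark \ref{rmk:fromUtoH}. Since $G$ is homogeneous of degree $2$ in $v$ (Lemma \ref{lem:def-G}), the hypothesis $G(\gamma(s),\dot\gamma(s)) \equiv c_\gamma$ says that $F(\gamma,\dot\gamma) \equiv \sqrt{c_\gamma}$, so $\gamma$ is parametrized proportionally to arc length with constant speed $\sqrt{c_\gamma}$. I would therefore set $\eta(\tilde s) \coloneqq \gamma(\tilde s/\sqrt{c_\gamma})$ for $\tilde s \in [0,\sqrt{c_\gamma}]$; a direct computation gives $\dot\eta(\tilde s) = \dot\gamma(\tilde s/\sqrt{c_\gamma})/\sqrt{c_\gamma}$ and, again by homogeneity, $G(\eta,\dot\eta) \equiv 1$, i.e.\ $\eta$ is an arc-length Finsler geodesic.

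By Lemma \ref{lem:def-G} the curve $x(\tilde s) = \mathcal{L}^{-1}(\eta(\tilde s),\dot\eta(\tilde s))$ is then a solution of Hamilton's equations for $U$ lying on $\Sigma = U^{-1}(1)$. At this point I would record the Legendre identity $G(q,v) = U(q,p)$ whenever $(q,v) = \mathcal{L}(q,p)$, which follows from Euler's relation for the degree-$2$ homogeneous $U$ (so that $\sprod{v,p} = 2U(q,p)$ and hence $G = 2U - U = U$); this is exactly what guarantees that the argument $(\gamma,\dot\gamma/\sqrt{c_\gamma})$ of $\phi$ lies in $\mathcal{L}(\Sigma)$, since $G(\gamma,\dot\gamma/\sqrt{c_\gamma}) = 1$ makes $\phi$ well defined there by \eqref{eq:def-phi}. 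Applying Remark \ref{rmk:fromUtoH} and \eqref{eq:timeRep} to the $U$-solution $x$, there is an orientation-preserving reparametrization $\mu\colon [0,T] \to [0,\sqrt{c_\gamma}]$, such that $x\circ\mu$ solves Hamilton's equations for $H$, whose inverse is
\[
t(\tilde s) = \int_0^{\tilde s} \frac{\norm{U'(x(\sigma))}}{\norm{H'(x(\sigma))}}\diff\sigma = \int_0^{\tilde s}\phi(\eta(\sigma),\dot\eta(\sigma))\diff\sigma,
\]
the second equality being just the definition \eqref{eq:def-phi} together with $x = \mathcal{L}^{-1}(\eta,\dot\eta)$.

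It then remains to undo the rescaling. Since the $q$-component of $x$ is $\eta$, the $q$-component of the $H$-solution $x\circ\mu$ is $\gamma\circ\lambda$ with $\lambda(t) = \mu(t)/\sqrt{c_\gamma}$, which indeed maps $[0,T]$ onto $[0,1]$ and identifies the reparametrization of the statement; consequently $\lambda^{-1}(s) = t(\sqrt{c_\gamma}\,s)$. Substituting $\sigma = \sqrt{c_\gamma}\,\sigma'$ in the integral above and using $\eta(\sqrt{c_\gamma}\sigma') = \gamma(\sigma')$ and $\dot\eta(\sqrt{c_\gamma}\sigma') = \dot\gamma(\sigma')/\sqrt{c_\gamma}$ yields
\[
t(s) = \sqrt{c_\gamma}\int_0^s \phi\!\left(\gamma(\sigma),\frac{\dot\gamma(\sigma)}{\sqrt{c_\gamma}}\right)\diff\sigma,
\]
which is the claimed formula.

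The two changes of variables are routine; the only genuine point requiring care is keeping the two rescalings separate, namely the \emph{kinematic} factor $\sqrt{c_\gamma}$ coming from the homogeneity of $G$ and the \emph{dynamical} factor $\norm{U'}/\norm{H'}$ coming from Lemma \ref{lem:timeParam-HtoU}, and making sure that $\phi$ is evaluated precisely on $\mathcal{L}(\Sigma)$. The duality $G = U\circ\mathcal{L}^{-1}$ on level-$1$ sets is what reconciles the two normalizations, so I expect that identity to be the crux of a clean write-up.
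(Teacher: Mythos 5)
Your proposal is correct and follows essentially the same route as the paper: factor $\lambda$ as an arc-length rescaling by $\sqrt{c_\gamma}$ composed with the dynamical reparametrization of Remark \ref{rmk:fromUtoH} applied to the $U$-solution $\mathcal{L}^{-1}(\eta,\dot\eta)$, then perform the same change of variable in the integral. Your explicit check that $G=U\circ\mathcal{L}^{-1}$ on the level-$1$ set (via Euler's relation) is a nice clarification of why $\phi$ is evaluated on $\mathcal{L}(\Sigma)$, but it is the same argument.
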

\begin{proof}
	If $G(\gamma(s),\dot{\gamma}(s)) = c_\gamma$ for all $s$, then the reparametrization
	\[
	\lambda_1\colon [0,\sqrt{c_\gamma}]\to [0,1],
	\quad \lambda_1(\tau) =\frac{\tau}{c_\gamma},
	\]
	is such that the curve
	$\hat\gamma = \gamma \circ \lambda_1\colon [0,\sqrt{c_\gamma}]\to D$
	is a geodesic parametrized by arc length.
	By Lemma \ref{lem:def-G},
	the curve $x\colon [0,\sqrt{c_\gamma}] \to D$ defined as
	\[
	x(\tau) = (q(\tau),p(\tau)) = \mathcal{L}^{-1}(\hat\gamma(\tau),\dot{\hat\gamma}(\tau)),
	\]
	is a solution of Hamilton's equations with respect to $U$.
	Let $\lambda_2\colon [0,T] \to [0,\sqrt{c_\gamma}]$ be the inverse of
	\[
	\tau \mapsto
	\int_{0}^{\tau}
	\frac{\norm{U'(x(u))}}{\norm{H'(x(u))}}\diff u
	= \int_{0}^{\tau}
	\phi\left( \hat\gamma(u),\dot{\hat\gamma}(u) \right) \diff u.
	\]
	With the change of variable $\sigma = u/\sqrt{c_\gamma}$,
	$\sigma \in [0,1]$,
	we have
	\begin{multline*}
	\lambda_2^{-1}(\tau) = \sqrt{c_\gamma}\int_{0}^{\tau/\sqrt{c_\gamma}}
	\phi\left(\hat\gamma(\sqrt{c_\gamma} \sigma ),
	\dot{\hat\gamma}(\sqrt{c_\gamma} \sigma )\right)
	\diff \sigma \\
	= \sqrt{c_\gamma} 
	\int_{0}^{\tau/\sqrt{c_\gamma}} \phi \left(\gamma(\sigma),
	\frac{
		\dot{\gamma}(\sigma)}{\sqrt{c_\gamma}}
	\right)
	\diff \sigma.
	\end{multline*}
	By Remark \ref{rmk:fromUtoH},
	in particular by \eqref{eq:timeRep},
	$x \circ \lambda_2$ is a solution of \eqref{eq:hamiltonianSys}.
	As a consequence,
	since $\mathcal{L}^{-1}$ is the identity map with respect the first variable,
	the curve 
	$\gamma \circ \lambda_1 \circ \lambda_2
	\colon [0,T] \to D$
	is the reparametrization of $\gamma$ such that it is a solution of 
	\eqref{eq:hamiltonianSys} with energy $E$.
	Hence, the desired reparametrization $\lambda\colon [0,T] \to [0,1]$
	is given by 
	$
	\lambda = \lambda_1 \circ \lambda_2
	$
	and its inverse $t\colon [0,1] \to [0,T]$ is given by
	\[
	t(s) = \lambda_2^{-1}\left(\lambda_1^{-1}(s)\right)
	= \lambda_2^{-1}(\sqrt{c_{\gamma}} s)
	= 
	\sqrt{c_\gamma} 
	\int_{0}^{s} \phi \left(\gamma(\sigma),
	\frac{
		\dot{\gamma}(\sigma)}{\sqrt{c_\gamma}}
	\right)
	\diff \sigma,
	\]
	and we are done.
	
\end{proof}

\begin{remark}
	\label{rmk:MaupertuisRiemannian}
	When $H$ is a Hamiltonian of natural type,
	the previous construction leads to the well-known Maupertuis principle
	(cf. \cite{Giambo2004}).
	Indeed, set
	\begin{equation*}
	\label{eq:def-natHamiltonian2}
	H(q,p) = \frac{1}{2}\sum_{i,j = 1}^n a^{ij}(q) p_i p_j + V(q),
	\end{equation*}
	where $ (a^{ij}(q))$ is a positive definite quadratic form on $\mathbb{R}^n$.
	Then, using the construction above, 
	\begin{equation*}
	\label{eq:U-Riemannian}
	U(q,p) = \frac{1}{2(E - V(q))}\sum_{i,j = 1}^n a^{ij}(q) p_i p_j,
	\end{equation*}
	and its Legendre transform is
	\begin{equation}
	\label{eq:G-Riemannian}
	G(q,v) = \frac{1}{2} (E - V(q))\sum_{i,j = 1}^{n} a_{ij}(q)v^i v^j,
	\end{equation}
	where $(a_{ij}(q))$ is the inverse of 
	$(a^{ij}(q))$.
	We observe that $G(q,v)$ degenerates on the boundary $\partial D$,
	where, by continuity, it can be extended to $0$. 
	Since
	\[
	U'(q,p) = \frac{1}{E - V(q)} H'(q,p),
	\quad \forall (q,p )\in \Sigma,
	\]
	then 
	\[
	\frac{\norm{U'(q,p)}}{\norm{H'(q,p)}}
	= \frac{1}{E - V(q)},
	\quad \forall (q,p )\in \Sigma.
	\]
	Using Lemma \ref{lem:fromGtoH},
	if $\gamma\colon [0,1] \to D$ is a geodesic of constant speed
	with respect to the Riemannian metric $\sqrt{G}$,
	then we can obtain the reparametrization $\lambda\colon [0,T] \to [0,1]$ such that 
	$q = \gamma\circ \lambda\colon [0,T] \to D$ is a solution of \eqref{eq:hamiltonianSys} for $H$.
	Using \eqref{eq:timeRep},
	the inverse of $\lambda$ is given by
	\begin{equation*}
	\label{eq:partTimeRep-Riemannian}
	t(s) = \sqrt{c_\gamma}\int_{0}^{s} \frac{1}{E - V(\gamma(\sigma))}\diff\sigma,
	\end{equation*}
	where $G(\gamma_y(s),\dot{\gamma}(s))\equiv c_\gamma$.
\end{remark}

\section{Jacobi-Finsler metric near the boundary}
Since $U$ and $G$ are not defined on $\partial D$,
the above construction does not allow to see
the brake orbits in $\overline{D}$ as Finsler geodesics.
In the following, we estimate the behaviour of $U$ near the boundary $\partial D$
and we will show that  
$G$ degenerates on $\partial D$ to the zero function,
as it can be seen in \eqref{eq:G-Riemannian} for the case of natural Hamiltonian systems.
Differently from \cite{Weinstein1978},
we are interested in the multiplicity of the brake orbits,
not only in their existence.
Hence,
in addition to the construction given in \cite{Weinstein1978}, 
we give an upper and a lower bound for the Finsler metric
which depend only on $H$ and the potential well $D$,
and these bounds will be exploited to obtain the one-one correspondence
between the brake orbits and the orthogonal geodesic chords.

As a preliminary step, we give the following result,
which is available up to a modification of $H(q,p) = K(q,p) + V(q)$ far away from $\Sigma$.
\begin{lemma}
	\label{lem:K-bounds}
	There exist two constants $\nu_1,\nu_2 > 0 $ such that the followings hold
	for every $q \in \overline{D}$ and $p \in \mathbb{R}^{n}$:
	\begin{equation}
	\label{eq:bound-Hessian-K-nu}
	\nu_1\norm{\xi}^2
	\le
	\frac{\partial^2 K}{\partial p^2}(q,p)[\xi,\xi] 
	\le \nu_2 \norm{\xi}^2,
	\quad \forall \xi \in \mathbb{R}^n;
	\end{equation}
	\begin{equation}
	\label{eq:bound-deriv-K-nu}
	\nu_1 \norm{p}
	\le  \norm*{\frac{\partial K}{\partial p}(q,p)} 
	\le \nu_2 \norm{p};
	\end{equation}
	\begin{equation}
	\label{eq:bound-K-nu}
	\frac{1}{2}\nu_1 \norm{p}^2 
	\le K(q,p) 
	\le \frac{1}{2}\nu_2 \norm{p}^2.
	\end{equation}
\end{lemma}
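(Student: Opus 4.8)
The plan is to reduce the whole statement to the two-sided Hessian bound \eqref{eq:bound-Hessian-K-nu}, and then to obtain that bound by combining the classical-type hypothesis (which is free) with a modification of $K$ in the region far from $\Sigma$ (which is the only substantive point). The starting observation is that, since $K(q,\cdot)$ is even, $K(q,0)=0$ and $(\partial K/\partial p)(q,0)=0$ for every $q\in\overline D$. Taylor's formula with integral remainder in the $p$-variable then gives
\[
K(q,p)=\int_0^1 (1-t)\,\frac{\partial^2 K}{\partial p^2}(q,tp)[p,p]\diff t,
\qquad
\frac{\partial K}{\partial p}(q,p)=\int_0^1 \frac{\partial^2 K}{\partial p^2}(q,tp)[p,\cdot]\diff t .
\]
Once \eqref{eq:bound-Hessian-K-nu} is available, \eqref{eq:bound-K-nu} follows from the first identity by bounding the integrand between $\nu_1\norm{p}^2$ and $\nu_2\norm{p}^2$ and using $\int_0^1(1-t)\diff t=\tfrac12$; and \eqref{eq:bound-deriv-K-nu} follows from the second identity, pairing with $p$ and using Cauchy--Schwarz for the lower bound $\nu_1\norm{p}$, and the operator-norm estimate $\norm*{(\partial^2K/\partial p^2)(q,tp)[p,\cdot]}\le\nu_2\norm{p}$ for the upper bound. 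Thus it suffices to prove \eqref{eq:bound-Hessian-K-nu}.

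The lower bound in \eqref{eq:bound-Hessian-K-nu} is immediate and needs no modification: since $V$ depends only on $q$, we have $\partial^2 K/\partial p^2=\partial^2 H/\partial p^2$, so by \eqref{eq:defPosDef} the Hessian is bounded below by $\nu(q)\norm{\xi}^2$; as $\nu$ is continuous and strictly positive and $\overline D$ is compact, setting $\nu_1\coloneqq\min_{q\in\overline D}\nu(q)>0$ gives the bound on all of $\overline D\times\mathbb R^n$.

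The upper bound is where the modification enters. First I would localize $\Sigma$: the lower Hessian bound together with the first integral identity yields $\tfrac12\nu_1\norm{p}^2\le K(q,p)=E-V(q)$ on $\Sigma$, and since $E-V$ is bounded on the compact set $\overline D$ there is $R_0>0$ with $\Sigma\subset\overline D\times\overline{B_{R_0}}$. Fix $R_1>R_0$; on the compact set $\overline D\times\overline{B_{R_1}}$ the continuous Hessian is bounded above, say by $\nu_2'$. The idea is then to leave $K$ unchanged on $\{\norm{p}\le R_1\}$ and to replace it, for $\norm{p}$ large, by a quadratic form $\tfrac12\sprod{A(q)p,p}$ whose constant Hessian $A(q)$ has eigenvalues in $[\nu_1,\nu_2']$, interpolating convexly across an annulus $R_1\le\norm{p}\le R_2$. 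Since this takes place in $\{\norm{p}\ge R_1\}$, away from the origin and from $\Sigma$, the modified $K$ stays even, of class $C^2$, and strictly convex in $p$, and it agrees with the original $K$ on the neighbourhood $\{\norm{p}\le R_1\}$ of $\Sigma$. Consequently $\Sigma=U^{-1}(1)$, the Hamiltonian flow on it, and the objects $U$, $G$, $F$ and the brake-orbit correspondence built in Section \ref{sec:JF-metric} are untouched; taking $\nu_2\coloneqq\nu_2'$ (enlarged if the interpolation requires) gives the global upper bound.

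The main obstacle is precisely this convex gluing in the annulus: one must interpolate between $K$ and the reference quadratic while keeping the function jointly convex in $p$ and its Hessian within $[\nu_1,\nu_2]$. Modifying the Hessian field directly is not admissible, since an arbitrary symmetric-matrix prescription need not be a Hessian; the interpolation must therefore be carried out at the level of $K$ itself, for instance by a radial cut-off in $\norm{p}$ arranged so that both the radial and the tangential second derivatives remain controlled. Every remaining step is a routine Taylor estimate, and the lower bound is inherited directly from the classical-type assumption.
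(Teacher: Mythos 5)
Your proposal is correct and follows essentially the same route as the paper: the paper likewise modifies $H$ far from $\Sigma$ so that it becomes fiber-wise quadratic for $\norm{p}$ large, obtains \eqref{eq:bound-Hessian-K-nu} from \eqref{eq:defPosDef} and the compactness of $\overline{D}$, and then derives \eqref{eq:bound-deriv-K-nu} and \eqref{eq:bound-K-nu} by integration, exactly as in your Taylor-remainder identities. You simply spell out the details (including the convex gluing in the annulus) that the paper's two-line proof leaves implicit.
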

\begin{proof}
	Since we are interested on the solutions of Hamilton's equations for $H$ in $\Sigma$, which is a bounded set,
	we can modify $H$ far away from $\Sigma$.
	Hence, we may assume that $H$ is fiber-wise quadratic for $\norm{p}$ sufficiently large.
	By \eqref{eq:defPosDef} and the compactness of $\overline{D}$,
	there exist 
	$\nu_1,\nu_2 > 0$ such that 
	\eqref{eq:bound-Hessian-K-nu} holds.
	Since $(\partial K/\partial p)(q,0) = 0$  and $K(q,0) = 0$ for all $q$,
	from \eqref{eq:bound-Hessian-K-nu} we infer
	\eqref{eq:bound-deriv-K-nu} and 
	\eqref{eq:bound-K-nu} by integration.
\end{proof}

\begin{lemma}
	\label{lem:degeneration-on-pD}
	Let $\nu_1,\nu_2$ the constants defined by Lemma \ref{lem:K-bounds}.
	Then, the followings hold:
	\begin{equation}
	\label{eq:bound-U}
	\frac{\nu_1}{2(E-V(q))}\norm{p}^2
	\le U(q,p)
	\le \frac{\nu_2}{2(E-V(q))}\norm{p}^2,
	\quad \forall (q,p)\in D \times \mathbb{R}^n,
	\end{equation}
	and
	\begin{equation}
	\label{eq:bound-G}
	\frac{(E-V(q))}{2\nu_2}\norm{v}^2
	\le  G(q,v)
	\le \frac{(E-V(q))}{2\nu_1}\norm{v}^2,
	\quad \forall (q,v)\in D \times \mathbb{R}^n.
	\end{equation}
	Moreover, there exists a constant $\nu_3 > 0$ such that
	\begin{equation}
	\label{eq:bound-Uprime}
	\norm{U'(q,p)} \ge \frac{\nu_3}{E - V(q)},
	\quad \forall (q,p) \in \Sigma.
	\end{equation}
	
\end{lemma}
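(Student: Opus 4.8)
The plan is to establish the three estimates in turn, deriving each from the fiberwise quadratic bounds on $K$ in Lemma~\ref{lem:K-bounds} together with the defining properties of $U$ and $G$. For the bound \eqref{eq:bound-U} on $U$, I would exploit that $U(q,\cdot)$ is homogeneous of degree $2$ and equals $1$ exactly on $\Sigma$. Fix $(q,p)$ with $p\neq 0$ (the case $p=0$ is trivial since $U(q,0)=0$). Since $K(q,\cdot)$ is strictly convex with minimum $0$ at the origin, the map $t\mapsto K(q,tp)$ is strictly increasing on $[0,\infty[$ and diverges by \eqref{eq:bound-K-nu}, so there is a unique $\rho=\rho(q,p)>0$ with $K(q,\rho p)=E-V(q)$, i.e. $(q,\rho p)\in\Sigma$. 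Homogeneity then gives $U(q,p)=\rho^{-2}$, so it suffices to sandwich $\rho$. Substituting $\rho p$ into \eqref{eq:bound-K-nu} yields $\tfrac12\nu_1\rho^2\norm{p}^2\le E-V(q)\le\tfrac12\nu_2\rho^2\norm{p}^2$, and solving for $\rho^{-2}=U(q,p)$ produces exactly \eqref{eq:bound-U}.

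For the bound \eqref{eq:bound-G} on $G$, I would invoke the order-reversing property of the Legendre transform of Lemma~\ref{lem:def-G}: if $U_1\le U_2$ pointwise in $p$, then taking suprema over $p$ in $\sprod{v,p}-U_i(q,p)$ gives $G_1\ge G_2$. Both sides of \eqref{eq:bound-U} are quadratics in $p$, and a direct computation shows that the Legendre transform of $p\mapsto\tfrac{\alpha}{2}\norm{p}^2$ is $v\mapsto\tfrac{1}{2\alpha}\norm{v}^2$. Transforming the lower bound $U(q,p)\ge\frac{\nu_1}{2(E-V(q))}\norm{p}^2$ yields the upper bound for $G$, while transforming the upper bound for $U$ yields the lower bound for $G$; together these are precisely \eqref{eq:bound-G}.

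The estimate \eqref{eq:bound-Uprime} is the main obstacle, because its right-hand side blows up like $(E-V(q))^{-1}$ as $q\to\partial D$, whereas a naive Cauchy--Schwarz estimate of $\partial U/\partial p$ alone yields only a bound of order $(E-V(q))^{-1/2}$; the singular behaviour must be extracted from the full gradient. The key observation is that $\Sigma$ is simultaneously a regular level set of $H$ and of $U$ (Lemma~\ref{lem:fromH-toU}), so $U'$ and $H'$ are parallel along $\Sigma$: there is a scalar $\mu=\mu(q,p)>0$ with $U'(q,p)=\mu\,H'(q,p)$. To identify $\mu$ I would match the $p$-components, $\partial U/\partial p=\mu\,\partial K/\partial p$, and pair with $p$; Euler's identity for the degree-$2$ homogeneous $U$ gives $\sprod{\partial U/\partial p,p}=2U=2$ on $\Sigma$, hence $\mu=2/\sprod{\partial K/\partial p,p}$. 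Since $K(q,\cdot)$ is strictly convex with gradient vanishing at the origin we have $\sprod{\partial K/\partial p,p}>0$, and by \eqref{eq:bound-deriv-K-nu} and Cauchy--Schwarz it is at most $\nu_2\norm{p}^2$; bounding $\norm{p}^2\le 2(E-V(q))/\nu_1$ on $\Sigma$ via \eqref{eq:bound-K-nu} then gives $\mu\ge\nu_1/\bigl(\nu_2(E-V(q))\bigr)$.

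Finally I would combine this with the uniform lower bound $\norm{H'}\ge h_1$ from \eqref{eq:bound-Hprime} to conclude $\norm{U'}=\mu\norm{H'}\ge h_1\nu_1/\bigl(\nu_2(E-V(q))\bigr)$, so that \eqref{eq:bound-Uprime} holds with $\nu_3=h_1\nu_1/\nu_2$. I expect the only delicate point to be justifying the parallelism $U'=\mu H'$ rigorously, namely that both gradients are nonzero and normal to the same hypersurface $\Sigma$ and that $\mu$ is genuinely positive throughout; this follows from the regularity of $\Sigma$ as a common level set together with the sign of $\sprod{\partial K/\partial p,p}$ established above.
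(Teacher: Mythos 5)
Your proposal is correct, and the interesting divergence from the paper is in the third estimate. For \eqref{eq:bound-U} and \eqref{eq:bound-G} you follow essentially the paper's route: the paper introduces $\omega(q,\theta)$ with $H(q,\omega(q,\theta)\theta)=E$ for $\theta\in\mathbb{S}^{n-1}$ and writes $U(q,p)=\norm{p}^2/\omega^2(q,p/\norm{p})$, which is the same radial-scaling-to-$\Sigma$ argument as your $\rho(q,p)$ with $U=\rho^{-2}$, followed by the same order-reversal of the Legendre transform. For \eqref{eq:bound-Uprime}, however, the paper splits $D$ into an interior region $D_\delta$ (where a crude bound $\norm{\partial U/\partial p}\ge c_\delta$ suffices) and a boundary collar, and in the collar it differentiates the implicit function $\omega(q,\theta)$ to compute $\partial U/\partial q$ explicitly, finally invoking compactness and $\partial V/\partial q\ne 0$ on $\partial D$ to bound $\norm{\partial H/\partial q}$ from below. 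Your argument instead observes that $\Sigma$ is a common regular level set of $H$ and $U$, so $U'=\mu H'$ with $\mu=2/\sprod{\partial K/\partial p,p}$ by Euler's identity, and then bounds $\mu$ from below using \eqref{eq:bound-deriv-K-nu} and \eqref{eq:bound-K-nu} and $\norm{H'}$ from below using \eqref{eq:bound-Hprime}. This is a single global computation with an explicit constant $\nu_3=h_1\nu_1/\nu_2$, avoiding both the region-splitting and the implicit-function-theorem differentiation; its only external input is the bound \eqref{eq:bound-Hprime}, which the paper asserts (by compactness of $\overline\Sigma$, nonvanishing of $\nabla V$ on $\partial D$ and strict convexity) but which your argument makes essential rather than incidental. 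The parallelism $U'=\mu H'$ is rigorous exactly as you say: both differentials annihilate the hyperplane $T_{(q,p)}\Sigma$, $H'\ne 0$ there, and pairing the $p$-components with $p$ identifies $\mu$ and shows it is positive since $\sprod{\partial K/\partial p,p}\ge\nu_1\norm{p}^2>0$ for $p\ne 0$.
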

\begin{proof}
	Set $\mathbb{S}^{n-1}=\left\{\theta\in\mathbb{R}^{n}:\norm{\theta}=1\right\}$.
	We define 
	$\widetilde{H}\colon D\times\mathbb{S}^{n-1}\times\mathbb{R}^{+}\to\mathbb{R}$
	as
	\[
	\widetilde{H}(q,\theta,\omega) = H(q,\omega\theta) - E.
	\]
	Since $\widetilde{H}(q,\theta,0) < 0$
	for all $(q,\theta)\in D \times \mathbb{S}^{n-1}$,
	exploiting also the convexity of $H$ we obtain that 
	for all $(q,\theta) \in D \times \mathbb{S}^{n-1}$
	there exists an unique $\omega > 0$ such that 
	$\widetilde{H}(q,\theta,\omega) = 0$.
	As a consequence, 
	the function $\omega\colon D \times \mathbb{S} ^{n-1} \to \mathbb{R}^+$
	such that
	\[
	\widetilde{H}(q,\theta,\omega(q,\theta)) = 0,
	\quad\forall (q,\theta) \in D\times \mathbb{S}^{n-1}, 
	\]
	is well defined.
	Moreover, by \eqref{eq:bound-deriv-K-nu} we have
	\[
	\frac{\partial \widetilde{H}}{\partial \omega}(q,\theta,\omega)
	= \sprod*{\frac{\partial H}{\partial p}(q,\omega \theta),\ \theta}
	= \frac{1}{\omega}
	\sprod*{\frac{\partial K}{\partial p}(q,\omega \theta),\omega \theta}
	\ge \omega \nu_1	 > 0,
	\]
	so we can apply the implicit function theorem 
	to obtain that
	the function $\omega$ is of class $C^2$ and it satisfies
	\begin{equation}
	\label{eq:deriv-f-q}
	\begin{multlined}
	\frac{\partial \omega}{\partial q}(q,\theta) = 
	-
	\left( \sprod*{
		\frac{\partial H}{\partial p}(q,\omega(q,\theta)\theta), \theta}
	\right)^{-1}
	\frac{\partial H}{\partial q}(q,\omega(q,\theta)) \\
	=
	-
	\left( \sprod*{
		\frac{\partial K}{\partial p}(q,\omega(q,\theta)\theta), \theta}
	\right)^{-1}
	\frac{\partial H}{\partial q}(q,\omega(q,\theta)).
	\end{multlined}
	\end{equation}
	By definition of $\omega(q,\theta)$,
	$K(q,\omega(q,\theta)\theta) = E - V(q)$
	for all $(q,\theta)\in D \times \mathbb{S}^{n-1}$.
	By \eqref{eq:bound-K-nu}, we have
	\begin{equation*}
	\label{eq:deg-onPd-proof1}
	\frac{1}{2}\nu_1 \omega^2(q,\theta) 
	\le K(q,\omega(q,\theta)\ \theta) = E- V(q)
	\le \frac{1}{2}\nu_2 \omega^2(q,\theta),
	\end{equation*}
	hence
	\begin{equation}
	\label{eq:bound-f}
	0 < \frac{2(E - V(q))}{\nu_2} 
	\le \omega^2(q,\theta)
	\le \frac{2(E - V(q))}{\nu_1},
	\qquad \forall q \in D, \ \forall \theta \in \mathbb{S}^{n-1}.
	\end{equation}
	By definition of $\omega(q,\theta)$, we have also
	\[
	U(q,\omega(q,\theta)\theta) = 1,
	\qquad \forall q \in D,\ \forall \theta \in \mathbb{S}^{n-1}.
	\]
	Since $U$ is homogeneous of degree $2 $ in $p$,
	for all $q \in D$ and $p \ne 0$ we obtain
	\begin{equation}
	\label{eq:U-by-r}
	U(q,p) = 
	\frac{\norm{p}^{2}}{\omega^{2}\left(q,p/\norm{p}\right)}
	U\left(q,\omega\left(q,\frac{p}{\norm{p}}\right)\ \frac{p}{\norm{p}}\right)
	=
	\frac{\norm{p}^{2}}{\omega^{2}\left(q,p/\norm{p}\right)}.
	\end{equation}
	Using \eqref{eq:bound-f} and \eqref{eq:U-by-r},
	we obtain \eqref{eq:bound-U}.
	Since the Legendre transform inverts the order relation,
	\eqref{eq:bound-U} implies \eqref{eq:bound-G}.
	
	It remains to prove \eqref{eq:bound-Uprime}.
	Let us fix $\delta > 0$ and set
	\[
	D_\delta =
	\left\{q \in D: V(q) \le E - \delta\right\},
	\]
	so every point in $D_\delta$ is far away from the boundary $\partial D$.
	By the bounds on the function $U$ given by \eqref{eq:bound-U}
	and recalling that $U$ is homogeneous of degree $2$ in $p$, 
	there exists a constant $c_\delta$ such that
	\begin{equation}
	\label{eq:bound-Uprime-proof3}
	\norm{U'(q,p)} \ge \norm*{\frac{\partial U}{\partial p}(q,p)}
	\ge c_\delta > 0,
	\quad \forall (q,p) \in \Sigma,\ q \in D_\delta.
	\end{equation}
	By the arbitrariness of $\delta$,
	we can obtain \eqref{eq:bound-Uprime} by proving it 
	for all $(q,p) \in \Sigma$ with $q$ sufficiently near the boundary.
	More precisely, we prove
	the existence of a constant $c_1$ 
	such that 
	\begin{equation}
	\label{eq:bound-Uprime-proof2}
	\norm{U'(q,p)} \ge
	\norm*{\frac{\partial U}{\partial q}(q,p)} \ge
	\frac{c_1}{E - V(q)},
	\end{equation}
	for all $(q,p) \in \Sigma$ with $q$ sufficiently near the boundary.
	For every $(q,p) \in \Sigma$, $U(q,p) = 1 $,
	so by \eqref{eq:U-by-r} we obtain
	\begin{multline*}
	\label{eq:dU-dq}
	\frac{\partial U}{\partial q}(q,p)
	= 
	-
	\frac{2 \norm{p}^{2}}{\omega^3(q,p/\norm{p})}
	\frac{\partial \omega}{\partial q}\left(q,\frac{p}{\norm{p}}\right) \\
	=
	- \frac{2}{\omega(q,p/\norm{p})}
	\frac{\partial \omega}{\partial q}\left(q,\frac{p}{\norm{p}}\right),
	\quad \forall (q,p)\in \Sigma.
	\end{multline*}
	As a consequence, using also \eqref{eq:deriv-f-q}
	and denoting $p/\norm{p}$ by $\theta$, we have
	\begin{equation}
	\label{eq:dU-dq-Sigma}
	\frac{\partial U}{\partial q}(q,p) = 
	\frac{2}{\omega(q,\theta)}
	\left( \sprod*{
		\frac{\partial K}{\partial p}(q,\omega(q,\theta)\theta), \theta}
	\right)^{-1}
	\frac{\partial H}{\partial q}(q,\omega(q,\theta)),
	\end{equation}
	for all $(q,p) \in \Sigma$.
	By \eqref{eq:bound-deriv-K-nu}, we have
	\[
	\norm*{ \sprod*{
			\frac{\partial K}{\partial p}(q,\omega(q,\theta)\theta), \theta}
	}^{-1}
	\ge \frac{1}{\nu_2 \omega(q,\theta)}.
	\]
	Hence, by \eqref{eq:dU-dq-Sigma} and using again \eqref{eq:bound-f},
	we obtain
	\begin{multline}
	\label{eq:boundU-prime-proof1}
	\norm*{\frac{\partial U}{\partial q}(q,p)}
	\ge 
	\frac{2}{\nu_2 \omega^2(q,\theta)}
	\norm*{\frac{\partial H}{\partial q}(q,\omega(q,\theta)\theta)}\\
	\ge
	\frac{\nu_1}{\nu_2(E- V(q))}
	\norm*{\frac{\partial H}{\partial q}(q,\omega(q,\theta)\theta)},
	\quad \forall (q,p)\in \Sigma.
	\end{multline}
	The existence of a strictly positive constant $c_1$
	such that \eqref{eq:bound-Uprime-proof2} holds
	for all $(q,p) \in \Sigma$ with $q$ sufficiently near the boundary  
	$\partial D$
	can be obtained
	by \eqref{eq:boundU-prime-proof1},
	recalling that $\overline{D}$ is compact and
	$(\partial V/\partial q)(q) \ne 0$ in $\partial D$.
	Finally,	we obtain \eqref{eq:bound-Uprime}
	by \eqref{eq:bound-Uprime-proof3}
	and \eqref{eq:bound-Uprime-proof2},
	recalling the arbitrariness of $\delta$.
\end{proof}

\begin{remark}
	By \eqref{eq:bound-G}, we can extend $G$ on the boundary $\partial D$ by continuity.
	Denoting this extension again with $G$, we have 
	\[
	G(q,v) = 0,
	\quad \forall q \in \partial D, \forall v \in \mathbb{R}^n.
	\]
\end{remark}

\subsection{Behaviour of the solutions near the boundary}

In this section, we present some preliminary results
about the behaviour of the solutions of Hamilton's equations
near the boundary of the potential well $D$.
These results are required to analyse the time reparametrization of the Finsler geodesics
that correspond to the brake orbits and
to study the concavity of the set $\overline{\Omega}$
described in Theorem \ref{teo:main}.

\begin{lemma}
	\label{lem:d2Vepsilon}
	There exists $\bar\epsilon > 0$ such that,
	if $(q(t),p(t))$ is a solution of \eqref{eq:hamiltonianSys}
	with Hamiltonian $H$ and 
	energy $E$ such that $V(q(t))\ge E - \bar\epsilon$
	for $t \in [a,b]$, then
	\begin{equation*}
	\label{eq:d2Vepsilon}
	\frac{\diff^2}{\diff t^2} V(q(t)) \le - \bar\epsilon,
	\quad \forall t \in [a,b].
	\end{equation*}
\end{lemma}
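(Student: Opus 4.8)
The plan is to differentiate $V(q(t))$ twice along the flow, isolate the one term that is negative and bounded away from zero near $\partial D$, and show that every other contribution is negligible because energy conservation forces the momentum to be small near the boundary. First I would record the first derivative: since $V$ does not depend on $p$ and $\dot q = \partial H/\partial p = \partial K/\partial p$,
\[
\frac{\diff}{\diff t}V(q(t)) = \sprod*{\frac{\partial V}{\partial q}(q),\ \frac{\partial K}{\partial p}(q,p)}.
\]
Differentiating once more, expanding $\ddot q = \frac{\partial^2 K}{\partial q\partial p}(q,p)\,\dot q + \frac{\partial^2 K}{\partial p^2}(q,p)\,\dot p$ and substituting $\dot p = -\frac{\partial K}{\partial q} - \frac{\partial V}{\partial q}$ (recall $\partial^2 H/\partial p^2 = \partial^2 K/\partial p^2$), I obtain an identity of the form
\[
\frac{\diff^2}{\diff t^2}V(q(t)) = -\frac{\partial^2 K}{\partial p^2}(q,p)\Big[\frac{\partial V}{\partial q}(q),\frac{\partial V}{\partial q}(q)\Big] + R(q,p),
\]
where the remainder $R$ collects the Hessian term $\frac{\partial^2 V}{\partial q^2}(q)[\dot q,\dot q]$, the mixed term $\sprod{\partial V/\partial q,\ (\partial^2 K/\partial q\partial p)\,\dot q}$, and the term $-\sprod{\partial V/\partial q,\ (\partial^2 K/\partial p^2)\,\partial K/\partial q}$.

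The leading term is uniformly negative near the boundary. By the lower bound \eqref{eq:bound-Hessian-K-nu} and because $\partial V/\partial q \ne 0$ on the compact set $\partial D$, continuity gives a neighbourhood $\mathcal N$ of $\partial D$ and a constant $c_0 > 0$ with
\[
-\frac{\partial^2 K}{\partial p^2}(q,p)\Big[\frac{\partial V}{\partial q}(q),\frac{\partial V}{\partial q}(q)\Big]
\le -\nu_1\norm*{\frac{\partial V}{\partial q}(q)}^2 \le -2c_0,
\quad \forall\, q\in\mathcal N,\ p\in\mathbb R^n .
\]
For the remainder I would exploit that $K(q,\cdot)$ is even in $p$: hence $\partial K/\partial p$ and $\partial^2 K/\partial q\partial p$ are odd in $p$ and vanish at $p=0$, while $\partial K/\partial q$ is even with $\partial K/\partial q(q,0)=0$. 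On a compact set $\overline D\times\{\norm p\le R\}$ this yields $\norm{\dot q}=\norm{\partial K/\partial p}=O(\norm p)$, $\norm{\partial^2 K/\partial q\partial p}=O(\norm p)$ and $\norm{\partial K/\partial q}=O(\norm p^2)$, so that each summand of $R$ is $O(\norm p^2)$, with a constant depending only on $H$ and $D$.

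Finally I would close the argument by energy conservation. If $V(q(t))\ge E-\bar\epsilon$, then $K(q(t),p(t)) = E-V(q(t))\le\bar\epsilon$, so \eqref{eq:bound-K-nu} gives $\norm{p(t)}^2\le 2\bar\epsilon/\nu_1$. Choosing $\bar\epsilon$ small enough that $\{V\ge E-\bar\epsilon\}\subset\mathcal N$ and that the $O(\norm p^2)$ estimate forces $\abs{R(q,p)}\le c_0$ there, I get $\frac{\diff^2}{\diff t^2}V(q(t))\le -2c_0 + c_0 = -c_0$ whenever $V(q(t))\ge E-\bar\epsilon$; after a last shrinking so that $\bar\epsilon\le c_0$, this is exactly the claimed inequality. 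The main obstacle is the bookkeeping in $R$: one must genuinely use the evenness of $K$ in $p$ to guarantee that $R=O(\norm p^2)$ rather than merely small, because it is the vanishing of $\norm p$ at $\partial D$ that lets the fixed negative term $-\frac{\partial^2 K}{\partial p^2}[\partial V/\partial q,\partial V/\partial q]$ dominate.
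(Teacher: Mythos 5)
Your argument is correct. The paper itself does not prove this lemma --- it simply cites \cite[Lemma 5.2]{Weinstein1978} --- and your computation is essentially the argument behind that citation: differentiate $V\circ q$ twice along the flow, isolate the term $-\frac{\partial^2 K}{\partial p^2}(q,p)[\nabla V,\nabla V]\le -\nu_1\norm{\nabla V(q)}^2$, which is uniformly negative near $\partial D$ by \eqref{eq:bound-Hessian-K-nu} and compactness, and kill the remainder using the smallness of $p$ forced by $K(q,p)=E-V(q)\le\bar\epsilon$ and \eqref{eq:bound-K-nu}. One cosmetic caveat: since $H$ is only assumed $C^2$, the evenness of $K$ in $p$ gives $\frac{\partial^2 K}{\partial q\partial p}(q,p)=o(1)$ and $\frac{\partial K}{\partial q}(q,p)=o(\norm{p})$ uniformly on $\overline D$ rather than the $O(\norm{p})$ and $O(\norm{p}^2)$ you claim (which would need $C^3$); this does not affect the conclusion, because all you need is that each remainder term tends to $0$ uniformly as $\norm{p}\to 0$.
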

\begin{proof}
	See \cite[Lemma 5.2]{Weinstein1978}.
\end{proof}
The following result provides an upper bound for the length of a time interval in which a solution of Hamilton's equations with energy $E$ can be uniformly near the boundary.
\begin{lemma}
	\label{lem:boundedTime}
	Let $\bar\epsilon$ given by Lemma \ref{lem:d2Vepsilon}.
	If $(q(t),p(t))$ is a solution of Hamilton's equations
	with total energy $E$
	and $V(q(t))\ge E - \bar\epsilon /2$
	for $t \in [a,b]$, then $b - a \le 2$.
\end{lemma}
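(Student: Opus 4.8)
The plan is to reduce the statement to a one-variable concavity estimate for the scalar function $f(t) = V(q(t))$, exploiting that $f$ is simultaneously bounded above by energy conservation and uniformly concave on $[a,b]$ by virtue of Lemma \ref{lem:d2Vepsilon}.

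First I would record three elementary facts. Since the solution has energy $E$ and the kinetic energy $K = H - V$ is nonnegative, we have $V(q(t)) = E - K(q(t),p(t)) \le E$ for every $t$. Second, the hypothesis $V(q(t)) \ge E - \bar\epsilon/2$ on $[a,b]$ in particular gives $V(q(t)) \ge E - \bar\epsilon$ there, so the hypotheses of Lemma \ref{lem:d2Vepsilon} are met and we obtain $f''(t) \le -\bar\epsilon$ for all $t \in [a,b]$. Thus $f$ is a $C^2$ function on $[a,b]$ with
\[
E - \frac{\bar\epsilon}{2} \le f(t) \le E, \qquad f''(t) \le -\bar\epsilon, \qquad \forall t \in [a,b].
\]

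The core of the argument is a second-order Taylor expansion about the midpoint $c = (a+b)/2$. Writing $f(a)$ and $f(b)$ by Taylor's theorem with Lagrange remainder centred at $c$ and adding, the first-order terms cancel because $a - c = -(b-c)$, and since $(a-c)^2 = (b-c)^2 = \bigl(\tfrac{b-a}{2}\bigr)^2$ one gets
\[
f(a) + f(b) = 2 f(c) + \frac{1}{2}\bigl(f''(\xi_1) + f''(\xi_2)\bigr)\left(\frac{b-a}{2}\right)^2
\]
for suitable $\xi_1,\xi_2 \in [a,b]$. Invoking $f''(\xi_i) \le -\bar\epsilon$ on the right, together with $f(a),f(b) \ge E - \bar\epsilon/2$ on the left and $f(c) \le E$, this collapses to
\[
2E - \bar\epsilon \le 2E - \frac{\bar\epsilon}{4}(b-a)^2,
\]
whence $(b-a)^2 \le 4$, that is, $b - a \le 2$.

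Since the whole argument is a convexity comparison in one variable, I expect no serious analytic obstacle; the only points requiring care are to verify that Lemma \ref{lem:d2Vepsilon} still applies under the weaker bound $E - \bar\epsilon/2$ (which it does, as $E - \bar\epsilon/2 \ge E - \bar\epsilon$), and to combine the upper bound coming from energy conservation with the concavity bound in the correct direction, so that the common factor $\bar\epsilon$ cancels and the resulting bound on $b-a$ is the absolute constant $2$, independent of $\bar\epsilon$.
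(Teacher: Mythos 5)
Your argument is correct: the midpoint Taylor comparison, combined with $V(q(t))\le E$ (from $K\ge 0$) and the uniform concavity $\tfrac{\diff^2}{\diff t^2}V(q(t))\le-\bar\epsilon$ supplied by Lemma \ref{lem:d2Vepsilon}, does yield $(b-a)^2\le 4$ exactly as you compute. The paper itself offers no argument here, deferring to Corollary 5.3 of \cite{Weinstein1978}, and your proof is a correct self-contained rendering of essentially that same concavity-comparison idea.
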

\begin{proof}
	See \cite[Corollary 5.3]{Weinstein1978}.
\end{proof}

For every $Q \in \partial D$,
we denote by $z(t,Q) = (q(t,Q),p(t,Q))$
the solution of Hamilton's equations for $H$
with total energy $E$ and such that $q(0) = Q$.
Since $z(t,Q)$ is the solution of the Cauchy problem
\[
\begin{cases}
\dot{z}(t,Q) = J H'(z(t,Q)),\\
z(0,Q) = (Q,0),
\end{cases}
\]
it is well defined and of class $C^1$.
\begin{remark}
	\label{rmk:dzC1}
	Since $JH'$ is a function of class $C^1$,
	also $\dot{z}(t,Q)$ is of class $C^1$ with respect to the variables $t$ and $Q$.
\end{remark}

\begin{lemma}
	\label{lem:dotq-boundarySolutions}
	For every $Q_0 \in \partial D$,
	there exists
	a function
	$\rho\colon[0,+\infty[\times \partial D \to\mathbb{R}^n $
	of class $C^1$ such that
	$\diff\rho(0,Q_0) = 0$ and
	\begin{equation}
	\label{eq:dotq-nearBoundary}
	\dot{q}(t,Q) = -t \frac{\partial^2 H}{\partial p^2}(Q_0,0)\nabla V(Q_0) 
	+ \rho(t,Q),
	\quad \forall t \in [0,+\infty[,\ \forall Q \in \partial D.
	\end{equation}
\end{lemma}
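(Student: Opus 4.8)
The plan is to produce $\rho$ directly from the asserted identity and then to check only that $\rho$ is of class $C^1$ and that $\diff\rho(0,Q_0)=0$. Concretely, I would set
\[
\rho(t,Q) \coloneqq \dot{q}(t,Q) + t\,\frac{\partial^2 H}{\partial p^2}(Q_0,0)\nabla V(Q_0),
\]
so that \eqref{eq:dotq-nearBoundary} holds by construction, for every $t \in [0,+\infty[$ and every $Q \in \partial D$. The second summand is linear in $t$ with a fixed coefficient vector depending only on the frozen point $Q_0$, while $\dot{z}$, and hence $\dot{q}$, is of class $C^1$ in $(t,Q)$ by Remark \ref{rmk:dzC1}; thus $\rho$ is of class $C^1$. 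The whole content of the lemma therefore reduces to the single computation $\diff\rho(0,Q_0)=0$, the differential being taken in both variables.

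First I would record the first and second time-derivatives of $q$ at $t=0$. The initial condition $z(0,Q)=(Q,0)$ and the first Hamilton equation give $\dot{q}(0,Q) = (\partial H/\partial p)(Q,0)$; since $p \mapsto H(Q,p)$ is even, its gradient in $p$ is odd, so $(\partial H/\partial p)(Q,0)=0$ and hence
\[
\dot{q}(0,Q) = 0 \qquad \text{for every } Q \in \partial D.
\]
Differentiating the first Hamilton equation once more in $t$ and using the chain rule yields
\[
\ddot{q}(t,Q) = \frac{\partial^2 H}{\partial q\,\partial p}(q,p)\,\dot{q} + \frac{\partial^2 H}{\partial p^2}(q,p)\,\dot{p}.
\]
At $t=0$ the first term vanishes because $\dot{q}(0,Q)=0$, while the second Hamilton equation together with $V(q)=H(q,0)$ gives $\dot{p}(0,Q) = -(\partial H/\partial q)(Q,0) = -\nabla V(Q)$. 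Consequently
\[
\ddot{q}(0,Q) = -\frac{\partial^2 H}{\partial p^2}(Q,0)\,\nabla V(Q).
\]

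With these two facts the differential of $\rho$ at $(0,Q_0)$ splits into its time and boundary components. For the time derivative, $(\partial\rho/\partial t)(0,Q_0) = \ddot{q}(0,Q_0) + (\partial^2 H/\partial p^2)(Q_0,0)\nabla V(Q_0)$, which is zero precisely because the two terms are equal and opposite by the displayed formula for $\ddot{q}(0,Q_0)$. For the derivative along $\partial D$, the added term $t\,(\partial^2 H/\partial p^2)(Q_0,0)\nabla V(Q_0)$ is independent of $Q$, so $\rho(0,\cdot) = \dot{q}(0,\cdot) \equiv 0$ on $\partial D$; hence its differential at $Q_0$ vanishes. Combining the two components gives $\diff\rho(0,Q_0)=0$, which is the claim.

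I do not expect a genuine obstacle: the statement is just the second-order behaviour of $\dot{q}$ about the initial point $(0,Q_0)$, read off from Hamilton's equations. The only point requiring care is the bookkeeping in the chain-rule computation of $\ddot{q}$, combined with the observation that the coefficient vector in \eqref{eq:dotq-nearBoundary} is frozen at $Q_0$ whereas $Q$ varies over $\partial D$; it is the evenness of $H$ in $p$, forcing $\dot{q}(0,\cdot)\equiv 0$, that simultaneously makes the boundary part of $\diff\rho(0,Q_0)$ vanish.
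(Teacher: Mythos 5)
Your proof is correct and essentially coincides with the paper's: the paper defines $\rho$ as $\int_0^t\bigl(\ddot q(\tau,Q)-\ddot q(0,Q_0)\bigr)\diff\tau$, which is exactly your $\dot q(t,Q)+t\,\frac{\partial^2 H}{\partial p^2}(Q_0,0)\nabla V(Q_0)$, and it rests on the same two computations ($\dot q(0,Q)=0$ and $\ddot q(0,Q)=-\frac{\partial^2 H}{\partial p^2}(Q,0)\nabla V(Q)$ from Hamilton's equations). Your explicit splitting of $\diff\rho(0,Q_0)$ into its $t$- and $Q$-components is, if anything, slightly more detailed than the paper's one-line conclusion.
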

\begin{proof}
	We define 
	$\rho_0\colon [0,+\infty[\times \partial D \to \mathbb{R}^n$
	as 
	\[
	\rho_0(t,Q) = \ddot{q}(t,Q) - \ddot{q}(0,Q_0).
	\]
	Recalling that $z(t,Q)$ is of class $C^1$ both respect to $t$ and $Q$,
	and taking the derivative with respect to $t$ of
	$\dot{z}(t,Q) = JH'(z(t,Q))$, we obtain that 
	$\ddot{z}(t,Q)$ is a continuous function,
	so $\rho_0(t,Q)$ is a continuous function.
	Since $p(0,Q) = 0$ for all $Q \in \partial D$, then $\dot{q}(0,Q) = 0$ and
	\[
	\begin{multlined}
	\ddot{q}(0,Q)
	= \frac{\diff}{\diff t}\dot{q}(t,Q)\bigg|_{t = 0}
	= \bigg(
	\frac{\partial^2 H}{\partial q \partial p}(q(t,Q),p(t,Q))\dot{q}(t,Q)\\
	+ \frac{\partial^2 H}{\partial p^2}(q(t,Q),p(t,Q))\dot{p}(t,Q)
	\bigg)\bigg|_{t = 0}\\
	=  -\frac{\partial^2 H}{\partial p^2}(Q,0)\nabla V(Q),
	\quad \forall Q \in \partial D.
	\end{multlined}
	\]
	By definition of $\rho_0$, we have
	\[
	\ddot{q}(t,Q) = \ddot{q}(0,Q_0) + \rho_0(t,Q) = 
	-\frac{\partial^2 H}{\partial p^2}(Q_0,0)\nabla V(Q_0) + \rho_0(t,Q).
	\]
	Integrating the previous equation and recalling that $\dot{q}(0,Q)= 0$, 
	we obtain \eqref{eq:dotq-nearBoundary}, with $\rho(t,Q) = \int_0^t \rho_0(\tau,Q)d\tau$.
	Since $\rho_0(0,Q_0) = 0$, we have
	$\diff\rho(0,Q_0)= 0$.
\end{proof}

\begin{lemma}
	\label{lem:def-bardelta}
	There exists a constant $\bar\delta > 0$ such that 
	the following property holds:
	\begin{equation}
	\label{eq:def-bardelta}
	\begin{split}
	&\text{
		$\forall y \in D$ with 
		$\text{dist}(y,\partial D) \le \bar\delta$
		there exists a unique solution $(q_y,p_y)$
		of \eqref{eq:hamiltonianSys}
	}\\
	&\text{
		with energy $E$
		and a unique $t_y > 0$
		such that $q_y(0) \in \partial D$, $q_y(t_y) = y$ 
		and
	}\\
	&\text{
		$\text{dist}(q_y(t),\partial D) \le \bar\delta$,
		for all $t \in [0,t_y]$.}
	\end{split}
	\end{equation}
\end{lemma}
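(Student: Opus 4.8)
The plan is to realize a one-sided tubular neighborhood of $\partial D$ as the image of the solutions issuing from the boundary with zero initial momentum, and to take $\bar\delta$ to be essentially the width of this collar. For $Q\in\partial D$ let $z(t,Q)=(q(t,Q),p(t,Q))$ be the solution of \eqref{eq:hamiltonianSys} with energy $E$ and $z(0,Q)=(Q,0)$ introduced above, and set $\Phi(Q,t)=q(t,Q)$ on $\partial D\times[0,+\infty[$. A pair $(q_y,p_y),t_y$ as in \eqref{eq:def-bardelta} is exactly a pair $(Q,t_y)$ with $\Phi(Q,t_y)=y$ and $q(\cdot,Q)$ confined to the collar, since the solution $(q_y,p_y)=z(\cdot,Q)$ is determined by its starting point $Q=q_y(0)\in\partial D$. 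Thus the statement reduces to showing that, for suitable $\delta,\bar\delta>0$, the map $\Phi$ restricts to a homeomorphism of $\partial D\times[0,\delta]$ onto a collar $\{y\in\overline D:\text{dist}(y,\partial D)\le\bar\delta\}$, together with the confinement of the associated trajectories.

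The main obstacle is the degeneracy of $\Phi$ at $t=0$: since $p(0,Q)=0$ we have $\dot q(0,Q)=\frac{\partial H}{\partial p}(Q,0)=0$, so $\partial_t\Phi(\cdot,0)\equiv0$ and $\Phi$ fails to be an immersion along $\partial D\times\{0\}$; by Lemma \ref{lem:dotq-boundarySolutions} its leading behaviour in $t$ is quadratic. The remedy is the substitution $u=t^2/2$: set $\tilde\Phi(Q,u)=\Phi(Q,\sqrt{2u})$. Since $\dot q(0,Q)=0$ we may write $\dot q(t,Q)=\int_0^t\ddot q(\sigma,Q)\diff\sigma$, whence
\[
\partial_u\tilde\Phi(Q,u)=\frac{\dot q(\sqrt{2u},Q)}{\sqrt{2u}}=\frac{1}{\sqrt{2u}}\int_0^{\sqrt{2u}}\ddot q(\sigma,Q)\diff\sigma,
\qquad
\partial_Q\tilde\Phi(Q,u)=\mathrm{Id}+\int_0^{\sqrt{2u}}\partial_Q\dot q(s,Q)\diff s.
\]
By Remark \ref{rmk:dzC1} both $\ddot q$ and $\partial_Q\dot q$ are continuous, so both expressions extend continuously to $u=0$, giving $\tilde\Phi\in C^1$ up to $u=0$ with $\partial_u\tilde\Phi(Q,0)=\ddot q(0,Q)=-\frac{\partial^2H}{\partial p^2}(Q,0)\nabla V(Q)$ (as computed in the proof of Lemma \ref{lem:dotq-boundarySolutions}) and $\partial_Q\tilde\Phi(Q,0)$ the inclusion $T_Q\partial D\hookrightarrow\mathbb{R}^n$. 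Because $\frac{\partial^2H}{\partial p^2}(Q,0)$ is positive definite (Definition \ref{def:Hclassical}), we have $\sprod{\frac{\partial^2H}{\partial p^2}(Q,0)\nabla V(Q),\nabla V(Q)}>0$, so $\partial_u\tilde\Phi(Q,0)$ is transversal to $T_Q\partial D=\ker\sprod{\nabla V(Q),\cdot}$ and $D\tilde\Phi(Q,0)$ is an isomorphism. In short, the reparametrization $u=t^2/2$ converts the tangential fold of $\Phi$ at $t=0$ into a transversal crossing.

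With $D\tilde\Phi(Q,0)$ invertible for every $Q\in\partial D$, the inverse function theorem makes $\tilde\Phi$ a local $C^1$-diffeomorphism near each $(Q,0)$. Since $\tilde\Phi(\cdot,0)=\mathrm{id}_{\partial D}$ is injective and $\partial D$ is compact, the standard collar argument upgrades this to a global statement: there is $\delta_0>0$ such that $\tilde\Phi$ is a $C^1$-diffeomorphism of $\partial D\times[0,\delta_0]$ onto a one-sided neighborhood of $\partial D$ in $\overline D$. Rewriting in the variable $t$, every $y$ in this neighborhood is $q(t_y,Q)$ for a unique $Q\in\partial D$ and a unique small $t_y>0$, which yields existence and uniqueness of $(q_y,p_y)=z(\cdot,Q)$ and of $t_y$.

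It remains to fix $\bar\delta$ and to verify the confinement $\text{dist}(q_y(t),\partial D)\le\bar\delta$ for $t\in[0,t_y]$; here Lemma \ref{lem:d2Vepsilon} does the work. First choose $\bar\delta$ so small that $\text{dist}(y,\partial D)\le\bar\delta$ forces $V(y)\ge E-\bar\epsilon$ and that the collar $\{\text{dist}(\cdot,\partial D)\le\bar\delta\}$ lies inside the neighborhood of the previous paragraph; this is possible because $V$ is of class $C^2$, $V=E$ and $\nabla V\ne0$ on the compact set $\partial D$, so $E-V$ and $\text{dist}(\cdot,\partial D)$ are comparable near $\partial D$. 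Along $q(\cdot,Q)$ one has $V(q(0,Q))=E$ and $\frac{\diff}{\diff t}V(q(t,Q))\big|_{t=0}=\sprod{\nabla V(Q),\dot q(0,Q)}=0$, while $\frac{\diff^2}{\diff t^2}V(q(t,Q))\le-\bar\epsilon$ as long as the trajectory stays in the region $V\ge E-\bar\epsilon$ (Lemma \ref{lem:d2Vepsilon}); hence $t\mapsto V(q(t,Q))$ is strictly decreasing there, so $E-V(q(t,Q))$ and, by comparability, $\text{dist}(q(t,Q),\partial D)$ increase monotonically. Consequently the maximal distance to $\partial D$ on $[0,t_y]$ is attained at $t=t_y$ and equals $\text{dist}(y,\partial D)\le\bar\delta$, which gives the confinement automatically; the same strict monotonicity shows $q(\cdot,Q)$ meets the level set through $y$ only once, yielding the uniqueness of $t_y$. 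This completes the proof.
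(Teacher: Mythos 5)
Your proposal is correct and follows essentially the same route as the paper: both rest on Lemma \ref{lem:dotq-boundarySolutions} and the transversality of $\frac{\partial^2 H}{\partial p^2}(Q,0)\nabla V(Q)$ to $T_Q\partial D$ to turn $(t,Q)\mapsto q(t,Q)$ into a boundary chart, and conclude by compactness of $\partial D$. Your substitution $u=t^2/2$ makes explicit the resolution of the degeneracy $\dot q(0,Q)=0$ that the paper's proof glosses over, and the confinement check via Lemma \ref{lem:d2Vepsilon} is likewise left implicit there, so if anything your write-up is the more careful one. One small imprecision: since only $E-V$ (not the distance itself) is monotone along the orbit, comparability yields $\text{dist}(q_y(t),\partial D)\le (c_2/c_1)\,\text{dist}(y,\partial D)$ rather than the claimed equality of the maximum with $\text{dist}(y,\partial D)$; this is absorbed by shrinking $\bar\delta$ or by defining the collar through level sets of $V$.
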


\begin{proof}
	By Lemma \ref{lem:dotq-boundarySolutions},
	for every $Q_0 \in \partial D$ there exists a 
	function $\rho\colon \partial D \times \mathbb{R} \to \mathbb{R}^n$
	such that
	\begin{equation*}
	\label{eq:qtQ-nearBoundary}
	q(t,Q) 
	= Q - \frac{t^2}{2}\
	\frac{\partial^2 H}{\partial p^2}(Q_0,0)\frac{\partial V}{\partial q}(Q_0) 
	+ \int_0^t\rho(\tau,Q)\diff\tau,
	\quad \forall Q \in \partial D,
	\end{equation*}
	where the vector 
	$
	\frac{\partial^2 H}{\partial p^2}(Q_0,0)\frac{\partial V}{\partial q}(Q_0) 
	$
	is not tangent to $\partial D$ for every $Q_0 \in \partial D$.
	Indeed, $\frac{\partial V}{\partial q}(Q_0)$ is 
	orthogonal to $\partial D$ by definition of $D$ 
	and by \eqref{eq:bound-Hessian-K-nu} we have
	\[
	\sprod*{
		\frac{\partial^2 H}{\partial p^2}(Q_0,0)\frac{\partial V}{\partial q}(Q_0) 
		,
		\frac{\partial V}{\partial q}(Q_0)
	}
	\ge 
	\nu_1 \norm*{\frac{\partial V}{\partial q}(Q_0)}^{2} > 0,
	\quad \forall Q_0 \in \partial D.
	\]
	As a consequence,
	if $\left\{y_1,\dots,y_{n-1}\right\}$ is a coordinate system of 
	$\partial D $ in a neighbourhood of $Q_0$,
	then
	$\left\{y_1,\dots,y_{n-1},t\right\}$
	is a local coordinate system on the manifold 
	with boundary $\partial D$ and
	$(t,Q)\mapsto q(t,Q)$ defines a local chart.
	By the compactness of $\partial D$,
	we obtain the existence of a $\bar\delta > 0$ which satisfies
	\eqref{eq:def-bardelta}.
\end{proof}

\textbf{Notation:}
If $y \in D$ is such that $\text{dist}(y,\partial D)\le \bar\delta$,
we denote by $(t_y,Q_y)$ the unique 
element in $\mathbb{R}^+ \times \partial D$ such that 
$q(t_y,Q_y) = y$ and 
$\text{dist}(q(t,Q_y),\partial D) \le \bar\delta$,
for all $t \in [0,t_y]$.
\begin{remark}
	\label{rmk:tyQyC1}
	Both
	$t_y$ and $Q_y$ are functions of class $C^1$ with respect to $y$,
	since they are implicitly defined by the coordinate system given by 
	the proof of Lemma \ref{lem:def-bardelta}.
\end{remark}

\section{The Jacobi-Finsler energy function}
\label{sec:JF-distance}

We define the functional $\mathcal{J}\colon W^{1,2}([0,1],\overline{D}) \to \mathbb{R}$ as
\begin{equation*}
\label{eq:def-Jcal}
\mathcal{J}(\gamma) = \int_{0}^{1}G(\gamma(s),\dot{\gamma}(s))\diff s.
\end{equation*}
If $\gamma([0,1])\subset D$,
then $\mathcal{J}$ is differentiable at $\gamma$
and its differential
\[
\diff\mathcal{J}(\gamma)\colon W^{1,2}([0,1],\mathbb{R}^n) \to \mathbb{R}
\]
is given by
\begin{equation*}
\label{eq:diff-J}
\diff\mathcal{J}(\gamma)[\xi] = 
\int_{0}^{1} \left(
\frac{\partial G}{\partial q}(\gamma(s),\dot{\gamma}(s))[\xi(s)] 
+ \frac{\partial G}{\partial v}(\gamma(s),\dot{\gamma}(s))[\dot{\xi}(s)]
\right)\diff s.
\end{equation*}
For every $y \in D$, we define $X_y$ as
\[
X_y \coloneqq \left\{
\gamma \in W^{1,2}([0,1],\overline{D}):
\gamma(0) = y,\
\gamma([0,1[) \subset D\
\text{and}\ 
\gamma(1) \in\partial D
\right\}.
\]
\begin{definition}
	We define the function $\psi\colon \overline{D} \to \mathbb{R}$ as
	\begin{equation}
	\label{eq:def-psi}
	\psi(y) \coloneqq \inf_{\gamma \in X_y}\mathcal{J}(\gamma). 
	\end{equation}
\end{definition}
The function $\psi$ will be the main focus of our analysis.
Indeed, from now on we will state and prove some results that 
will lead to define the set
$\Omega$ described in Theorem \ref{teo:main} as $\psi^{-1}(]\delta,\infty[)$,
for some $\delta$ sufficiently small.

\begin{lemma}
	\label{lem:def-d1}
	There exists a constant $\bar{d} > 0$ such that 
	\begin{equation*}
	\label{eq:bound-psi}
	\psi(y) \le \bar{d},
	\quad \forall y \in \overline{D}.
	\end{equation*}
\end{lemma}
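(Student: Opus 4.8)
The plan is to exploit the upper bound on $G$ furnished by \eqref{eq:bound-G} in order to reduce the estimate to a bound on the Dirichlet energy of a conveniently chosen admissible curve, and then to exhibit, for every $y$, a single curve in $X_y$ whose energy is controlled uniformly in $y$.

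First, since $\overline{D}$ is compact, the continuous function $q \mapsto E - V(q)$ attains a maximum $M \coloneqq \max_{q \in \overline{D}}(E - V(q)) < \infty$. Combining this with the right-hand inequality in \eqref{eq:bound-G} yields the fiber-wise quadratic estimate
\begin{equation*}
G(q,v) \le \frac{E - V(q)}{2\nu_1}\norm{v}^2 \le \frac{M}{2\nu_1}\norm{v}^2, \qquad \forall (q,v) \in D \times \mathbb{R}^n.
\end{equation*}
Consequently, for any $\gamma \in X_y$ one has $\mathcal{J}(\gamma) \le \frac{M}{2\nu_1}\int_0^1 \norm{\dot\gamma(s)}^2 \diff s$, so it suffices to produce, for each $y$, one admissible curve whose Dirichlet energy is bounded independently of $y$.

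The key step is the construction of such a curve, and here I would take the straight Euclidean segment to the nearest boundary point. For $y \in D$, let $Q^* \in \partial D$ realize $\text{dist}(y,\partial D) = \norm{y - Q^*} \eqqcolon r$, and set $\gamma_y(s) = (1-s)y + s Q^*$ for $s \in [0,1]$. For $s \in [0,1[$ one has $\norm{\gamma_y(s) - y} = s\,r < r$, so $\gamma_y(s)$ lies in the open ball $B(y,r)$, which is contained in $D$ (any point of $B(y,r)$ outside $\overline{D}$ would force the segment from $y$ to cross $\partial D$ within distance $<r$, contradicting $\text{dist}(y,\partial D)=r$); moreover $\gamma_y(1) = Q^* \in \partial D$, hence $\gamma_y \in X_y$. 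Since $\dot\gamma_y \equiv Q^* - y$ has constant norm $r$, the previous inequality gives
\begin{equation*}
\psi(y) \le \mathcal{J}(\gamma_y) \le \frac{M}{2\nu_1}\,r^2 \le \frac{M}{2\nu_1}\,\big(\mathrm{diam}\,\overline{D}\big)^2 \eqqcolon \bar d,
\end{equation*}
where $\mathrm{diam}\,\overline{D} < \infty$ by compactness. For $y \in \partial D$ the segment degenerates to a point, but there one may take curves dipping slightly into $D$ and returning to $\partial D$; by the degeneration of $G$ on $\partial D$ their energy tends to $0$, whence $\psi(y) = 0 \le \bar d$. This establishes the claim with the explicit constant $\bar d = \frac{M}{2\nu_1}(\mathrm{diam}\,\overline{D})^2$.

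I expect the only genuinely delicate point to be the admissibility of the connecting curve, namely the requirement $\gamma_y(]0,1[) \subset D$. The observation that the open segment from $y$ to its nearest boundary point lies in the open ball $B(y,\text{dist}(y,\partial D))$, and therefore in $D$, disposes of this without any convexity hypothesis on $D$ and without appealing to intrinsic distances or the collar structure near $\partial D$; all the remaining ingredients are routine consequences of \eqref{eq:bound-G} and the compactness of $\overline{D}$.
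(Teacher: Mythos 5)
Your proof is correct and follows essentially the same route as the paper, whose entire proof is the one-line observation that the bound follows from the upper estimate \eqref{eq:bound-G} and the compactness of $\overline{D}$; you have simply made the implicit test curve explicit (the straight Euclidean segment to the nearest boundary point) and verified its admissibility. The only caveat is the degenerate case $y \in \partial D$, where $X_y$ as literally defined is empty so $\psi(y)$ needs a convention, but this is a definitional issue the paper glosses over as well and does not affect the substance of the estimate.
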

\begin{proof}
	The thesis directly follows from the upper bound given in \eqref{eq:bound-G}
	and the compactness of $\overline{D}$.
\end{proof}

\subsection{Existence and uniqueness of the minimum}

In this section,
we prove that $\psi(y)$ is always attained on at least one curve and,
if $y$ is sufficiently near the boundary $\partial D$, this curve is unique.
To these aims, we
construct the minimum curve as a 
limit of a sequence of Finsler geodesics
$(\gamma_k) \subset W^{1,2}([0,1],D)$.
To prove that $(\gamma_k)$ is weakly uniformly convergent in $W^{1,2}([0,1],D)$,
we will exploit the fact that $(T_k)$ is uniformly bounded,
where $T_k$ are given by
\begin{equation*}
\label{eq:def-Tk}
T_k = t_k(1) = 
\sqrt{\mathcal{J}(\gamma_k)}
\int_{0}^1
\phi\left(\gamma(\sigma),\frac{\dot{\gamma}(\sigma)}
{\sqrt{\mathcal{J}(\gamma_k)}}\right)
\diff\sigma.
\end{equation*}
Recalling the reparametrization given by Lemma \ref{lem:fromGtoH},
$T_k$ is the final time of the reparametrization of $\gamma_k$ which 
is a solution of \eqref{eq:hamiltonianSys} with energy $E$.
More formally, we require the following lemma.

\begin{lemma}
	\label{lem:finiteTk}
	Let $(\gamma_k)\subset W^{1,2}([0,1],D)$
	be a sequence of Finsler geodesics.
	If there exist two constants $c_1,c_2$ such that
	\begin{equation}
	\label{eq:finiteTk-hp}
	0 < c_1 \le \mathcal{J}(\gamma_k) \le c_2,
	\quad \forall k \in \mathbb{N},
	\end{equation}
	then there exist two constants $c_3,c_4$ such that
	\begin{equation}
	\label{eq:bounded-param}
	0 < c_3 \le
	T_k
	\le c_4,
	\quad \forall k \in \mathbb{N}.
	\end{equation}

\end{lemma}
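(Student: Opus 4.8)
The plan is to read $T_k$ through the arc-length reparametrization of Lemma \ref{lem:fromGtoH}. Since each $\gamma_k$ is a geodesic we have $G(\gamma_k,\dot\gamma_k)\equiv c_{\gamma_k}:=\mathcal{J}(\gamma_k)$, so its Finsler length equals $L_k:=\int_0^1\sqrt{G(\gamma_k,\dot\gamma_k)}\diff s=\sqrt{c_{\gamma_k}}$, and \eqref{eq:finiteTk-hp} gives $\sqrt{c_1}\le L_k\le\sqrt{c_2}$. Writing $x_k\colon[0,L_k]\to\Sigma$ for the arc-length parametrized lift and $z_k=(q_k,p_k)\colon[0,T_k]\to\Sigma$ for the corresponding solution of \eqref{eq:hamiltonianSys}, Lemma \ref{lem:fromGtoH} yields $T_k=\int_0^{L_k}\phi(x_k(u))\diff u$ and, equivalently, $L_k=\int_0^{T_k}\phi(z_k(t))^{-1}\diff t$, with $\phi=\norm{U'}/\norm{H'}$ on $\Sigma$.

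\emph{Lower bound.} This is immediate. Since $V(q)\le E-V_{\min}$ on $\overline D$ with $V_{\min}:=\min_{\overline D}V$, estimate \eqref{eq:bound-Uprime} gives $\norm{U'}\ge\nu_3/(E-V_{\min})$ on $\Sigma$, while \eqref{eq:bound-Hprime} gives $\norm{H'}\le h_2$. Hence $\phi\ge\phi_0:=\nu_3/\bigl(h_2(E-V_{\min})\bigr)>0$ uniformly on $\Sigma$, so $T_k\ge\phi_0 L_k\ge\phi_0\sqrt{c_1}=:c_3>0$.

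\emph{Upper bound (the main point).} I will estimate $T_k=\abs{A}+\abs{B}$ in Hamiltonian time, where $A=\{t\in[0,T_k]:V(q_k(t))\le E-\bar\epsilon/2\}$ and $B$ is its complement, with $\bar\epsilon$ given by Lemma \ref{lem:d2Vepsilon}. On $A$ the trajectory stays in the compact set $\{(q,p)\in\Sigma:V(q)\le E-\bar\epsilon/2\}$, on which $p\ne0$ and $U'$ is continuous, hence $\norm{U'}\le M$ there; together with \eqref{eq:bound-Hprime} this gives $\phi^{-1}\ge h_1/M$ on $A$, so $\sqrt{c_2}\ge L_k\ge\int_A\phi^{-1}\diff t\ge(h_1/M)\abs{A}$ and $\abs{A}\le M\sqrt{c_2}/h_1$. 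For $B$ I must control both the length and the number of near-boundary excursions: each maximal interval of $B$ has length at most $2$ by Lemma \ref{lem:boundedTime}, so it suffices to bound their number $m$. The key observation is that between two consecutive maximal intervals of $B$ the function $t\mapsto V(q_k(t))$ must drop below $E-\bar\epsilon$. Indeed, otherwise the whole intermediate interval, together with the two adjacent excursions, would lie in $\{V\ge E-\bar\epsilon\}$, where Lemma \ref{lem:d2Vepsilon} forces $V\circ q_k$ to be concave with second derivative $\le-\bar\epsilon$; but a concave function whose values at the two ends of the gap both equal $E-\bar\epsilon/2$ lies above that level throughout, contradicting $V\le E-\bar\epsilon/2$ on the gap.

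It follows that between consecutive excursions the curve crosses the buffer $\{E-\bar\epsilon\le V\le E-\bar\epsilon/2\}$ at least twice. Since $\abs*{\nabla V}$ is bounded on $\overline D$ by some $C_V$, changing $V$ by $\bar\epsilon/2$ costs Euclidean length at least $(\bar\epsilon/2)/C_V$; and on the buffer $E-V\ge\bar\epsilon/2$, so \eqref{eq:bound-G} gives $F=\sqrt{G}\ge\sqrt{\bar\epsilon/(4\nu_2)}\,\norm{v}$. Hence each gap contributes Finsler length at least $\ell_0:=\sqrt{\bar\epsilon/(4\nu_2)}\,\bar\epsilon/C_V>0$. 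As the total length is $L_k\le\sqrt{c_2}$, this forces $m-1\le\sqrt{c_2}/\ell_0$, whence $\abs{B}\le2m\le2\bigl(1+\sqrt{c_2}/\ell_0\bigr)$. Adding the two estimates gives $T_k\le M\sqrt{c_2}/h_1+2\bigl(1+\sqrt{c_2}/\ell_0\bigr)=:c_4$, completing the proof. The essential difficulty is precisely this excursion count: the integrand $\phi$ blows up like $(E-V)^{-1}$ at $\partial D$, so the upper bound cannot be obtained pointwise and must instead combine the concavity of $V\circ q_k$ near the boundary with the definite Finsler length cost of crossing the buffer region.
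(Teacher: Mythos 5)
Your proof is correct and follows essentially the same strategy as the paper: the lower bound via the uniform bounds \eqref{eq:bound-Hprime} and \eqref{eq:bound-Uprime}, and the upper bound via the rim/band/core decomposition, Lemma \ref{lem:d2Vepsilon} to force a dip into the core between consecutive rim excursions, Lemma \ref{lem:boundedTime} for the per-excursion time bound, and a count of band crossings. The only (harmless) deviations are that you count crossings by Finsler length with an explicit constant $\ell_0$ rather than by the minimal value $\bar{d}$ of the energy functional $\mathcal{J}$ over band-crossing curves, and you estimate the core contribution in Hamiltonian time rather than in the parameter $s$.
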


\begin{proof}
	Since $(E - V(q))\to 0$ when $q \to \partial D$ 
	and $\overline{D}$ is compact,
	there exists a strictly positive constant $c_5$ such that
	\[
	\frac{1}{E - V(q)} \ge c_5, \quad \forall q \in D.
	\]
	By the definition of $\phi$ given by \eqref{eq:def-phi},
	using  \eqref{eq:bound-Hprime} and \eqref{eq:bound-Uprime}
	we have
	\begin{equation*}
	\int_{0}^1 \phi
	\left(
	\gamma_k(s),\frac{\dot{\gamma}_k(s)}{\sqrt{\mathcal{J}(\gamma_k)}}
	\right)
	\diff s
	\ge
	\int_0^1 \frac{\nu_3}{h_2(E - V(\gamma_k(s)))}\diff s
	\ge
	\frac{c_5 \nu_3}{h_2} > 0.
	\end{equation*}
	Using also \eqref{eq:finiteTk-hp}, we obtain
	\[
	T_k = 
	\sqrt{\mathcal{J}(\gamma_k)}
	\int_{0}^1
	\phi\left(\gamma(\sigma),\frac{\dot{\gamma}(\sigma)}
	{\sqrt{\mathcal{J}(\gamma_k)}}\right)
	\diff\sigma \ge
	\sqrt{c_1}\	\frac{c_5 \nu_3}{h_2}
	\eqqcolon c_3 > 0.
	\]
	To prove the existence of a constant $c_4$ such that 
	\eqref{eq:bounded-param} holds,
	we work directly on the reparametrizations of $\gamma_k$.
	Following the construction given in \cite[Lemma 5.1]{Weinstein1978},
	let $\bar\epsilon > 0$ be given by Lemma \ref{lem:d2Vepsilon}
	and let us divide $\overline{D}$ into:
	\begin{itemize}
		\item 
		the rim 
		$\left\{q \in \overline{D}: V(q)\ge E - \bar{\epsilon}/2\right\}$;
		\item
		the band 
		$\left\{q \in \overline{D}:  E - \bar{\epsilon}\le V(q)\le E - \bar\epsilon /2\right\}$;
		\item
		the core 
		$\left\{q \in \overline{D}: V(q)\le E - \bar\epsilon\right\}$.
	\end{itemize}
	Let us set
	\[
	\overline{\Sigma}_{\bar\epsilon/2} = 
	\left\{(q,p) \in \Sigma: V(q)\le E - \bar\epsilon/2\right\}.
	\]
	Since $\overline{\Sigma}_{\bar\epsilon/2}$ is compact,
	there exists a constant $\bar\phi > 0$ such that
	$\phi(q,p)\le \bar\phi$ for all $(q,p) \in \overline{\Sigma}_{\bar\epsilon/2}$.
	For every $k$, we set
	$I_k = \left\{s \in [0,1]: V(\gamma_k(s))<  E - \bar\epsilon/2\right\}$ and
	$C_k = [0,1]\setminus I_k$.
	Hence,
	\begin{multline}
	\label{eq:finiteParam-proof1}
	\int_{0}^1\phi
	\left( \gamma_k(s),\frac{\dot{\gamma}_k(s)}{\sqrt{\mathcal{J}(\gamma_k)}} \right)
	\diff s   
	= \int_{I_k}\phi
	\left( \gamma_k(s),\frac{\dot{\gamma}_k(s)}{\sqrt{\mathcal{J}(\gamma_k)}} \right)
	\diff s \\
	+
	\int_{C_k}\phi
	\left( \gamma_k(s),\frac{\dot{\gamma}_k(s)}{\sqrt{\mathcal{J}(\gamma_k)}} \right)
	\diff s 
	\le
	\bar\phi + 
	\int_{C_k}\phi
	\left( \gamma_k(s),\frac{\dot{\gamma}_k(s)}{\sqrt{\mathcal{J}(\gamma_k)}} \right)
	\diff s.
	\end{multline}
	For every $k$, the set $C_k$ is the union of closed and disjoint intervals
	in which the orbit $\gamma_k$ is in the rim.
	The orbit can enter the rim many times but,
	as a consequence of Lemma \ref{lem:fromGtoH},
	each pair of passages into the rim must be separated by a dip into the core,
	and this requires the solution to cross the band twice.
	This bounds the number of closed disjoint intervals 
	that constitutes $C_k$, independently of $k$. 
	Indeed, let us set
	\[
	\bar{d} = \min\left\{\mathcal{J}(\gamma): \gamma \in W^{1,2}([0,1],D),
	V(\gamma(0)) = E - \bar\epsilon,
	V(\gamma(1)) = E - \bar\epsilon/2
	\right\}.
	\]
	Since \eqref{eq:finiteTk-hp} holds,
	we have that $\gamma_k$ can cross the band at most
	$N$ times, where $N$ is a positive integer strictly greater than
	$
	c_2/(2\bar{d}),
	$
	independent on $k$. 
	As a consequence, by Lemma \ref{lem:boundedTime} we have 
	\[
	\sqrt{\mathcal{J}(\gamma_k)}
	\int_{C_k}\phi
	\left( \gamma_k(s),\frac{\dot{\gamma}_k(s)}{\sqrt{\mathcal{J}(\gamma_k)}} \right)
	\diff s
	\le
	2N,
	\]
	and, by \eqref{eq:finiteTk-hp} and \eqref{eq:finiteParam-proof1}, we have
	\[
	T_k \le 
	\sqrt{c_2}\ \bar\phi + 2N \eqqcolon c_4,
	\]
	so \eqref{eq:bounded-param} holds.
\end{proof}

\begin{proposition}
	\label{prop:psi-attains-min}
	For every $y \in D$,
	$\psi(y)$ is attained on at least one curve
	$\gamma_y \in X_y$.
	Moreover, $\gamma_y$ satisfies
	\begin{equation}
	\label{eq:cond-min-gammay}
	\int_{0}^{1} \left(
	\frac{\partial G}{\partial q}(\gamma_y(s),\dot{\gamma}_y(s))[\xi(s)] 
	+ \frac{\partial G}{\partial v}(\gamma_y(s),\dot{\gamma}_y(s))[\dot{\xi}(s)]
	\right)\diff s = 0,
	\end{equation}
	for all $\xi \in W^{1,2}_0([0,1],\mathbb{R}^n)$,
	and there exist a $T > 0$ and a diffeomorphism
	$\sigma\colon [0,T] \to [0,1]$ such that,
	setting $\hat\gamma_y = \gamma_y \circ \sigma\colon [0,T] \to \overline{D}$,
	the pair
	$
	(q,p)\colon [0,T] \to \overline{D}\times \mathbb{R}^n
	$
	given by
	\[
	(q(t),p(t)) = \mathcal{L}^{-1}\left(\hat{\gamma}_y(t),\dot{\hat\gamma}_y(t)\right)
	\]
	is a solution of \eqref{eq:hamiltonianSys} with energy $E$,
	$q(0) = y$ and $q(T) \in \partial D$.
\end{proposition}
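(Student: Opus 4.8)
The plan is to realize $\psi(y)$ by the direct method along a minimizing sequence made of \emph{geodesics}, so that the time-reparametrization of Lemma~\ref{lem:fromGtoH} and the uniform bound of Lemma~\ref{lem:finiteTk} are available, and the degenerate estimate \eqref{eq:bound-G} can be upgraded to a genuine $W^{1,2}$ bound. As a preliminary I would check that $\psi(y)>0$: setting $\delta_0=\tfrac12(E-V(y))$, every $\gamma\in X_y$ must run from $y$ to the level set $\{E-V=\delta_0\}$ through the region $\{E-V\ge\delta_0\}$, covering there a Euclidean length at least $\rho=\mathrm{dist}(y,\{E-V=\delta_0\})>0$; by \eqref{eq:bound-G} and Cauchy--Schwarz this gives $\mathcal{J}(\gamma)\ge \tfrac{\delta_0}{2\nu_2}\rho^2>0$. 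Together with Lemma~\ref{lem:def-d1}, this confines $\psi(y)$ between two positive constants.

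To build the sequence, for small $\delta>0$ I set $D_\delta=\{q\in D:V(q)<E-\delta\}$, whose boundary $\Gamma_\delta$ is a $C^2$ hypersurface ($E-\delta$ being a regular value, since $\nabla V\neq0$ near $\partial D$). On the compact set $\overline{D_\delta}$ the metric $F=\sqrt G$ is non-degenerate because $E-V\ge\delta$, so minimizing $\mathcal{J}$ among curves in $\overline{D_\delta}$ from $y$ to $\Gamma_\delta$ is classical: the minimizer $\gamma_\delta$ exists, has image in $\overline{D_\delta}$, and is a geodesic meeting $\Gamma_\delta$ orthogonally. Truncating any $\gamma\in X_y$ at its first meeting with $\Gamma_\delta$ shows $\mathcal{J}(\gamma_\delta)\le\psi(y)$, while prolonging $\gamma_\delta$ across the thin collar $\{E-\delta\le V\le E\}$, whose $F$-width tends to $0$ with $\delta$ by \eqref{eq:bound-G}, shows $\psi(y)\le\mathcal{J}(\gamma_\delta)+o(1)$; hence $\mathcal{J}(\gamma_\delta)\to\psi(y)$. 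Taking $\delta_k\downarrow0$ and $\gamma_k=\gamma_{\delta_k}$ yields geodesics with $0<c_1\le\mathcal{J}(\gamma_k)\le c_2$, so Lemma~\ref{lem:finiteTk} gives $0<c_3\le T_k\le c_4$.

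The heart of the argument is converting the bound on $T_k$ into a uniform $W^{1,2}$ bound, beating the degeneracy of $G$ at $\partial D$. Let $\hat\gamma_k=\gamma_k\circ\lambda_k\colon[0,T_k]\to\overline D$ be the solution of \eqref{eq:hamiltonianSys} attached to $\gamma_k$ by Lemma~\ref{lem:fromGtoH}, and $\hat z_k=\mathcal{L}^{-1}(\hat\gamma_k,\dot{\hat\gamma}_k)$ its phase-space lift. Changing variables $s=\lambda_k(t)$ in the seminorm gives
\[
\int_0^1\norm{\dot\gamma_k(s)}^2\diff s
=\sqrt{\mathcal{J}(\gamma_k)}\int_0^{T_k}
\norm{\dot{\hat\gamma}_k(t)}^2\,
\frac{\norm{U'(\hat z_k(t))}}{\norm{H'(\hat z_k(t))}}\diff t .
\]
On the energy surface $\norm{\dot{\hat\gamma}_k}^2=\norm{\partial K/\partial p}^2\le C\,(E-V)$ by \eqref{eq:bound-deriv-K-nu} and \eqref{eq:bound-K-nu}, while $\norm{U'}\le C'/(E-V)$ (the upper counterpart of \eqref{eq:bound-Uprime}, obtained by the computation of Lemma~\ref{lem:degeneration-on-pD}) and $\norm{H'}\ge h_1$ by \eqref{eq:bound-Hprime}; the factors $E-V$ cancel, the integrand is uniformly bounded, and the integral is $\le C''\,T_k\le C''c_4$. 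Thus $(\gamma_k)$ is bounded in $W^{1,2}([0,1],\overline D)$.

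Finally I extract $\gamma_k\rightharpoonup\gamma_y$ weakly in $W^{1,2}$ and uniformly. Uniform convergence keeps $\gamma_y(0)=y$ and, since $\gamma_k(1)\in\Gamma_{\delta_k}\to\partial D$, forces $\gamma_y(1)\in\partial D$; restricting to the first hitting time of $\partial D$ and rescaling to $[0,1]$, we may assume $\gamma_y\in X_y$. Convexity of $G$ in $v$ gives weak lower semicontinuity of $\mathcal{J}$, so $\mathcal{J}(\gamma_y)\le\liminf_k\mathcal{J}(\gamma_k)=\psi(y)\le\mathcal{J}(\gamma_y)$, and $\gamma_y$ is a minimizer. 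Varying $\gamma_y$ by $\xi\in W^{1,2}_0([0,1],\mathbb{R}^n)$ supported in $]0,1[$, where $\gamma_y$ stays in the open set $D$ and $\mathcal{J}$ is differentiable, and passing to the limit, I obtain \eqref{eq:cond-min-gammay}; hence $\gamma_y$ is a constant-$G$-speed geodesic in the interior. Feeding it into Lemma~\ref{lem:fromGtoH} produces $\sigma\colon[0,T]\to[0,1]$ with $\hat\gamma_y=\gamma_y\circ\sigma$ such that $(q,p)=\mathcal{L}^{-1}(\hat\gamma_y,\dot{\hat\gamma}_y)$ solves \eqref{eq:hamiltonianSys} at energy $E$, with $q(0)=y$ and $q(T)=\gamma_y(1)\in\partial D$ (whence $p(T)=0$), the finiteness of $T$ coming from $T_k\le c_4$. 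The decisive obstacle is precisely the third step: no $W^{1,2}$ bound survives for an arbitrary minimizing sequence, and it is recovered only by combining the geodesic structure through Lemma~\ref{lem:finiteTk} with the cancellation of $E-V$ between $\norm{\dot q}^2$ and $\norm{U'}$ near $\partial D$.
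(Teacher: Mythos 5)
Your proposal is correct and follows essentially the same route as the paper: exhaust $D$ by the sublevels $\{V < E-\delta\}$, minimize there to obtain a sequence of geodesics whose energies converge to $\psi(y)$, invoke Lemma~\ref{lem:finiteTk} to bound the Hamiltonian times $T_k$, convert this into a uniform $W^{1,2}$ bound via the cancellation of $E-V(q)$ near $\partial D$, and pass to the limit before applying Lemma~\ref{lem:fromGtoH}. The only local difference is in the $W^{1,2}$ bound: the paper bounds $\int_0^1 (E-V(\gamma_k(s)))^{-1}\diff s$ directly from $T_k\le c_4$ and the lower bound \eqref{eq:bound-Uprime} and then uses \eqref{eq:bound-G}, whereas you change variables to the Hamiltonian time and additionally need the upper estimate $\norm{U'}\le C/(E-V)$ on $\Sigma$ --- not stated in the paper, but indeed obtainable from the computations of Lemma~\ref{lem:degeneration-on-pD}.
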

\begin{proof}
	For every $k \in \mathbb{N} $, set
	$D_k = V(]-\infty,E - 1/k[)\cap D$ and
	\[
	X_y^k = \left\{
	\gamma \in W^{1,2}([0,1],\overline{D_k}):
	\gamma(0) = y,
	\gamma(1) \in \partial D_k
	\right\}.
	\]
	For $k$ sufficiently large,
	$y \in D_k$
	and $D_k$ is homeomorphic to $D$.
	By standard arguments,
	it can be proved that,
	for $k$ sufficiently large,
	the problem of minimization of the functional $\mathcal{J}$ in the space $X_y^k$
	has a solution $\gamma_k$ which
	is a Finsler geodesic and such that	$\gamma_k([0,1[)\subset D_k$.
	Setting $\ell_k = \mathcal{J}(\gamma_k)$,
	by definition of $\psi$ in \eqref{eq:def-psi} 
	we have
	\[
	\liminf_{k \to \infty}\ell_k \ge \psi(y).
	\]
	We claim that
	\begin{equation}
	\label{eq:lkToPsiY}
	\liminf_{k \to \infty}\ell_k = \psi(y).
	\end{equation}
	By absurd, 
	if it was $\liminf_{k\to \infty}\ell_k > \psi(y)$,
	then we could find a curve $x \in X_y$ 
	such that $\mathcal{J}(x)< \liminf_{k \to \infty}\ell_k$
	and a suitable reparametrization of $x$ would yield a curve
	$x_k \in X_y^k$ such that $\mathcal{J}(x_k)<\ell_k$,
	which contradicts the minimality of $\ell_k$.
	Hence \eqref{eq:lkToPsiY} holds.
	Since $\gamma_k$ minimizes $\mathcal{J}$ on $X^k_y$,
	it is a geodesic with constant speed, hence 
	\[
	G(\gamma_k(s),\dot{\gamma}_k(s)) = \ell_k,
	\quad \forall s \in [0,1].
	\]
	Using \eqref{eq:bound-G} and Lemma \ref{lem:def-d1},
	there are two constants $c_1,c_2$ such that
	\begin{equation}
	\label{eq:bounded-lk}
	0 < c_1 \le \ell_k \le c_2,
	\end{equation}
	for all $k$ sufficiently large.
	As a consequence, 
	we can apply Lemma \ref{lem:finiteTk},
	so there exist two constants $c_3,c_4$ such that 
	\eqref{eq:bounded-param} holds 
	for every $k$ sufficiently large.
	Using also  \eqref{eq:bound-Hprime}, \eqref{eq:bound-Uprime} and 
	\eqref{eq:bounded-lk},
	we have
	\begin{equation*}
	\label{eq:phi-EVq}
	c_4 \ge
	T_k
	\ge
	\sqrt{c_3}
	\int_0^1 \frac{\nu_3}{h_2(E - V(\gamma_k(s)))}\diff s.
	\end{equation*}
	Then, the sequence
	\[
	\int_{0}^{1}\frac{1}{E - V(\gamma_k(s))}\diff s
	\]
	is bounded.
	By \eqref{eq:bound-G} and \eqref{eq:bounded-lk},
	we have
	\begin{multline*}
	\int_{0}^{1}\norm{\dot{\gamma}_k(s)}^2\diff s
	\le 
	\int_{0}^{1}\frac{2 \nu_2}{E - V(\gamma_k(s))}
	G(\gamma_k(s),\dot{\gamma}_k(s))\diff s \\
	=
	\ell_k
	\int_{0}^{1}\frac{2 \nu_2}{E - V(\gamma_k(s))} \diff s 
	\le
	2\nu_2 c_2\int_{0}^{1}\frac{1}{E - V(\gamma_k(s))}\diff s,
	\end{multline*}
	so $(\gamma_k)$ is bounded in $W^{1,2}([0,1],\overline{D})$.
	By the Ascoli Arzelà Theorem,
	$\gamma_k$ uniformly converges to a curve $\gamma_y$,
	up to a subsequence.
	We claim that $\gamma_y$ is a minimizer for $\mathcal{J} $ in $X_y$.
	Let us show that $\gamma_y \in X_y$.
	Since $\gamma_k$ converges uniformly to $\gamma_y$, 
	$\gamma_y(0) = y $ and $\gamma(1) \in \partial D$.
	We show that $\gamma([0,1[)\subset D$ arguing by contradiction.
	Let $\bar{s} \in (0,1)$ be the first instant where
	$\gamma_y(\bar{s})\in \partial D$.
	By the minimality of $\gamma_k$,
	we have that $\gamma_y([\bar{s},1]) \subset \partial D$.
	Thus, we obtain 
	\[
	\lim_{k \to \infty} (1 - \bar{s})\ell_k 
	= \lim_{k \to \infty} \int_{\bar{s}}^1 G(\gamma_k(s),\dot{\gamma}_k(s))\diff s
	= \int_{\bar{s}}^1 G(\gamma_y(s),\dot{\gamma}_y(s))\diff s = 0,
	\]
	in contradiction with $\ell_k \ge c_1 > 0$,
	given by \eqref{eq:bounded-lk}.
	Hence, $\gamma_y$ belongs to $X_y$ and,
	since $\mathcal{J}(\gamma_y) \le \liminf_{k \to \infty} \ell_k$,
	by \eqref{eq:lkToPsiY} we obtain 
	$\mathcal{J}(\gamma_y ) = \psi(y)$.
	Being a minimizer,
	$\gamma_y$ satisfies \eqref{eq:cond-min-gammay}.
	By Lemma \ref{lem:fromGtoH},
	the diffeomorphism $\sigma\colon [0,T] \to [0,1]$ such that
	$\gamma_y \circ \sigma$
	is a solution of \eqref{eq:hamiltonianSys} with energy $E$ 
	has inverse
	\[
	t(s) = \sqrt{\mathcal{J}(\gamma_y)}
	\int_{0}^s
	\phi\left(\gamma_y(\sigma),
	\frac{\dot{\gamma_y}(\sigma)}{\sqrt{\mathcal{J}(\gamma_y)}}
	\right) \diff\sigma.
	\]
	By \eqref{eq:bounded-param},
	\[
	T = t(1) =  
	\sqrt{\mathcal{J}(\gamma_y)}
	\int_{0}^1
	\phi\left(\gamma_y(\sigma),\frac{\dot{\gamma}_y(\sigma)}{\sqrt{\mathcal{J}(\gamma_y)}}\right)
	\diff\sigma,
	\]
	is bounded and strictly greater than $0$.
\end{proof}

Recalling the definition of $\bar\delta$ given in Lemma \ref{lem:def-bardelta},
we give the following result.
\begin{proposition}
	\label{prop:psi-unique}
	For every $y \in D$ such that $\text{dist}(y,\partial D)\le \bar\delta$,
	the minimizer of $\mathcal{J}$ in the space $X_y$ is unique.
\end{proposition}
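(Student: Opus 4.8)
The plan is to establish uniqueness by a convexity-type argument combined with the precise geometric control near the boundary provided by Lemma \ref{lem:def-bardelta}. First I would recall that, by Proposition \ref{prop:psi-attains-min}, a minimizer $\gamma_y$ exists and, being a minimizer, is a Finsler geodesic satisfying the Euler--Lagrange equation \eqref{eq:cond-min-gammay} with constant speed $G(\gamma_y(s),\dot\gamma_y(s))\equiv\mathcal{J}(\gamma_y)=\psi(y)$. By Lemma \ref{lem:fromGtoH}, such a minimizer reparametrizes to a solution $(q_y,p_y)$ of \eqref{eq:hamiltonianSys} with energy $E$, starting at $y$ and reaching $\partial D$ at the first exit time. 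The key reduction is therefore: \emph{uniqueness of the minimizer is equivalent to uniqueness of the Hamiltonian solution through $y$ that exits $\partial D$ after staying in a $\bar\delta$-neighbourhood of the boundary.}

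The main idea is to exploit the fact that when $\text{dist}(y,\partial D)\le\bar\delta$, the minimizing curve cannot dip deep into $D$. I would argue that any minimizer must remain in the $\bar\delta$-strip $\{q:\text{dist}(q,\partial D)\le\bar\delta\}$: if a competing curve plunged into the core it would, by the length/energy estimates of \eqref{eq:bound-G} together with Lemma \ref{lem:boundedTime} and Lemma \ref{lem:d2Vepsilon} (which forces $\tfrac{d^2}{dt^2}V(q(t))\le-\bar\epsilon$ near the boundary, hence strict concavity of $t\mapsto V(q(t))$ and a controlled single excursion), accrue strictly larger $\mathcal{J}$-value than the short curve furnished by the chart of Lemma \ref{lem:def-bardelta}. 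Once confined to the strip, I invoke Lemma \ref{lem:def-bardelta} directly: the map $(t,Q)\mapsto q(t,Q)$ is a local diffeomorphism (a genuine chart, by Remark \ref{rmk:tyQyC1}), so there is a \emph{unique} pair $(t_y,Q_y)$ with $q(t_y,Q_y)=y$ whose trajectory stays within distance $\bar\delta$. The reparametrized minimizer is forced to be exactly the arc $t\mapsto q(t,Q_y)$, $t\in[0,t_y]$, run backwards from $y$ to $Q_y\in\partial D$, and this pins down $\gamma_y$ up to the (unique, speed-normalizing) reparametrization.

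To close the argument, I would suppose two minimizers $\gamma^1,\gamma^2\in X_y$ both attaining $\psi(y)$. Each reparametrizes to a Hamiltonian solution with energy $E$ emanating from $y$; by the confinement step both solutions remain in the $\bar\delta$-strip for all intermediate times, so each corresponds via the chart to a pair $(t_y^i,Q_y^i)$. The uniqueness clause in \eqref{eq:def-bardelta} gives $(t_y^1,Q_y^1)=(t_y^2,Q_y^2)$, whence the two Hamiltonian trajectories coincide as curves in $\overline{D}$. Since the reparametrization linking a constant-speed geodesic to its Hamiltonian solution is itself unique (the diffeomorphism $\sigma$ of Proposition \ref{prop:psi-attains-min}, whose inverse is the explicit integral in Lemma \ref{lem:fromGtoH}), we conclude $\gamma^1=\gamma^2$ as elements of $X_y$.

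I expect the confinement step to be the main obstacle. Everything downstream is a clean application of the chart from Lemma \ref{lem:def-bardelta}, but ruling out a competitor that leaves the strip, dips into the core, and returns requires quantitatively comparing $\mathcal{J}$-values. The tool is the strict concavity of $V$ along trajectories near $\partial D$ from Lemma \ref{lem:d2Vepsilon}: it prevents a minimizer from touching $\partial D$ at an interior parameter and then re-entering (an excursion returning to the boundary would contradict concavity of $t\mapsto V(q(t))$), while the time bound of Lemma \ref{lem:boundedTime} forbids lingering. One must also ensure $\bar\delta$ is taken small enough (possibly shrinking it relative to $\bar\epsilon$) that the minimizer's $\mathcal{J}$-cost of reaching $\partial D$ through the strip is strictly below the cost of any core-traversing competitor; this is where the lower bound in \eqref{eq:bound-G} and the geometry of the chart must be balanced carefully.
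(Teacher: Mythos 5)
Your proposal is correct and follows essentially the same route as the paper: both reduce the two hypothetical minimizers, via Proposition \ref{prop:psi-attains-min}, to Hamiltonian trajectories with energy $E$ emanating from $\partial D$, and then invoke the uniqueness/chart statement of Lemma \ref{lem:def-bardelta} to force them to coincide. The confinement step you single out as the main obstacle is in fact left implicit in the paper's own proof, which applies Lemma \ref{lem:def-bardelta} directly without verifying that the minimizers' trajectories remain in the $\bar\delta$-strip; your sketch of how to supply it (via Lemma \ref{lem:d2Vepsilon}, Lemma \ref{lem:boundedTime} and the bounds \eqref{eq:bound-G}) is the natural way to close that point.
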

\begin{proof}
	By contradiction argument, let us assume the existence of two different curves,
	$\gamma_1,\gamma_2 \in X_y$, such that
	$\psi(y) = \mathcal{J}(\gamma_1) = \mathcal{J}(\gamma_2)$.
	Since $\gamma_1$ and $\gamma_2$ are two minimizers,
	by Proposition \ref{prop:psi-attains-min}		
	they are reparametrizations of two solutions of \eqref{eq:hamiltonianSys}
	with energy $E$ and final points on $\partial D$.
	We recall that, since $H$ is even with respect to $p$, 
	a backward parametrization of a solution is still a solution.
	Hence, if $\gamma_1(1) \ne \gamma_2(1)$, then
	setting $Q_1 = \gamma_1(1)$ and $Q_2 = \gamma_2(1)$, 
	we have
	\[
	y = q(t_1,Q_1)
	\quad \text{and} \quad
	y = q(t_2,Q_2),
	\]
	for some $t_1,t_2 > 0$.
	As a consequence,
	$y$ is given by two different coordinates of the 
	local chart constructed in Lemma \ref{lem:def-bardelta},
	which is a contradiction.
	If $\gamma_1(1) = \gamma_2(1)$,
	by the uniqueness of the solution $q(t,Q_y)$ stated in 
	Lemma \ref{lem:def-bardelta}
	we infer that $\gamma_1(s) = \gamma_2(s)$ for all $s \in [0,1]$,
	which is a contradiction.
\end{proof}
\begin{remark}
	\label{rmk:gamma_y-qtQ_y}
	By Proposition \ref{prop:psi-attains-min} and Proposition \ref{prop:psi-unique},
	for every $y \in D$ such that 
	$\text{dist}(y,\partial D)\le \bar\delta$,
	the minimizer $\gamma_y$ and the curve $q(t,Q_y)$
	are linked by a reparametrizations which inverts the orientation.
\end{remark}

\section{Differentiability and concavity}
Let $\bar\delta$ satisfies property \eqref{eq:def-bardelta},
and set
\begin{equation*}
\label{eq:def-Dbardelta}
D_{\bar\delta} = 
\left\{
y \in D: \text{dist}(y,\partial D)\le \bar\delta
\right\}.
\end{equation*}

\begin{proposition}
	\label{prop:psi-differentiable}
	For every $y \in D_{\bar\delta}$,
	$\psi$ is differentiable at $y$ and 
	\begin{equation}
	\label{eq:diff-psi}
	\diff\psi(y)[\xi] =
	- \frac{\partial G}{\partial v}(y,\dot{\gamma}_y(0))[\xi],
	\quad \forall \xi \in \mathbb{R}^{n}.
	\end{equation}
\end{proposition}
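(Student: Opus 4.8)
The plan is to obtain \eqref{eq:diff-psi} from a first–variation computation for $\mathcal{J}$, relying on the uniqueness of the minimizer (Proposition \ref{prop:psi-unique}) and on its $C^1$ dependence on the base point. For $y \in D_{\bar\delta}$, Remark \ref{rmk:gamma_y-qtQ_y} identifies $\gamma_y$, up to an orientation–reversing reparametrization, with the solution $q(\cdot,Q_y)$ of \eqref{eq:hamiltonianSys}, while Remark \ref{rmk:tyQyC1} ensures that $(t_y,Q_y)$ are of class $C^1$ in $y$. Together with the smooth dependence of the Hamiltonian flow on its initial conditions, this produces a $C^1$ family $s\mapsto\gamma_{y+s\xi}$ of minimizers for the perturbed base points $y+s\xi$, with a well–defined variation field $\Xi(\tau)\coloneqq\frac{\partial}{\partial s}\gamma_{y+s\xi}(\tau)\big|_{s=0}$ satisfying $\Xi(0)=\xi$ and $\Xi(1)\in T_{Q_y}\partial D$. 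Since $\gamma_{y+s\xi}$ realizes the infimum, the map $y\mapsto\mathcal{J}(\gamma_y)$ coincides with $\psi$, so this $C^1$ structure is what will deliver the differentiability of $\psi$.

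First I would differentiate $s\mapsto\psi(y+s\xi)=\mathcal{J}(\gamma_{y+s\xi})$ at $s=0$; differentiating under the integral sign (justified below) gives
\[
\diff\psi(y)[\xi]=\int_0^1\left(\frac{\partial G}{\partial q}(\gamma_y,\dot\gamma_y)[\Xi]+\frac{\partial G}{\partial v}(\gamma_y,\dot\gamma_y)[\dot\Xi]\right)\diff\tau.
\]
Because $\gamma_y$ is a minimizer, it satisfies the Euler–Lagrange equation in the interior, namely $\frac{\diff}{\diff\tau}\frac{\partial G}{\partial v}(\gamma_y,\dot\gamma_y)=\frac{\partial G}{\partial q}(\gamma_y,\dot\gamma_y)$, as follows from \eqref{eq:cond-min-gammay}. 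Integrating the second term by parts and using this identity, the interior contribution cancels and only the boundary terms survive:
\[
\diff\psi(y)[\xi]=\frac{\partial G}{\partial v}(\gamma_y(1),\dot\gamma_y(1))[\Xi(1)]-\frac{\partial G}{\partial v}(y,\dot\gamma_y(0))[\xi].
\]

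It then remains to show that the endpoint term at $\tau=1$ vanishes, which yields \eqref{eq:diff-psi}. Here the key observation is the Legendre–transform identity: by \eqref{eq:defLcal} and the definition of $G$, one has $\frac{\partial G}{\partial v}(\gamma_y(s),\dot\gamma_y(s))=p(s)$, where $(\gamma_y(s),p(s))=\mathcal{L}^{-1}(\gamma_y(s),\dot\gamma_y(s))\in\Sigma$. As $s\to1$ we have $\gamma_y(s)\to Q_y\in\partial D$, and since $V\equiv E$ on $\partial D$ the kinetic energy $K(\gamma_y(s),p(s))=E-V(\gamma_y(s))$ tends to $0$; by \eqref{eq:bound-K-nu} this forces $p(s)\to0$, which is precisely the brake condition at the endpoint. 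Since $\Xi$ stays bounded, the endpoint term is the limit of $p(s)[\Xi(s)]$ and hence vanishes. The resulting expression $-\frac{\partial G}{\partial v}(y,\dot\gamma_y(0))[\xi]$ is linear in $\xi$ and depends continuously on $y$, so that $\psi$ is genuinely differentiable with the stated differential.

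The main technical obstacle is the interplay between the degeneration of $G$ and the blow–up of $\dot\gamma_y$ as $\tau\to1$: since $G(\gamma_y,\dot\gamma_y)\equiv\psi(y)$ is constant while $E-V(\gamma_y)\to0$, the bounds \eqref{eq:bound-G} force $\norm{\dot\gamma_y}\to\infty$. I would therefore need to justify both the differentiation under the integral sign and the boundary term as a genuine limit, controlling the integrands through \eqref{eq:bound-G} and the finiteness of $\int_0^1\bigl(E-V(\gamma_y)\bigr)^{-1}\diff\tau$ established in the proof of Proposition \ref{prop:psi-attains-min}. These give a bound $\norm{\frac{\partial G}{\partial v}(\gamma_y,\dot\gamma_y)}\le C\,\bigl(E-V(\gamma_y)\bigr)^{1/2}$ together with the integrability of $\frac{\partial G}{\partial q}(\gamma_y,\dot\gamma_y)[\Xi]$, so that all the manipulations above are legitimate and the endpoint contribution indeed vanishes.
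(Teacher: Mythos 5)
Your strategy is the classical first--variation/envelope computation with a moving endpoint, and the idea that the endpoint term dies because $\frac{\partial G}{\partial v}(\gamma_y(s),\dot\gamma_y(s))$ is the conjugate momentum $p(s)$, which tends to $0$ at $\partial D$ (the brake condition), is correct and is a nice way to see \emph{why} the formula has no contribution from $\tau=1$. However, as written the argument has a genuine gap, concentrated exactly at the point you flag as ``the main technical obstacle'' but do not resolve. First, the existence of a $C^1$ family $s\mapsto\gamma_{y+s\xi}$ in a topology strong enough to produce a variation field $\Xi$ with $\dot\Xi\in L^2$ up to $\tau=1$ is asserted, not proved. The identification of $\gamma_y$ with $q(\cdot,Q_y)$ passes through a $y$-dependent reparametrization that degenerates at the endpoint (since $\norm{\dot\gamma_y(\tau)}\to\infty$ as $\tau\to 1$), and the paper itself only obtains the $C^1$ dependence of $\dot\gamma_y(0)$ on $y$ \emph{after} Proposition \ref{prop:psi-differentiable} (via Lemma \ref{lem:def-varphi} and Lemma \ref{lem:psiC2}); assuming differentiable dependence of the whole minimizing curve on $y$ is at least as strong as what you are trying to prove. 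Second, the interchange of $\frac{\diff}{\diff s}$ and $\int_0^1$ is not justified by the bounds you invoke: the estimate $\norm{\frac{\partial G}{\partial v}(\gamma_y,\dot\gamma_y)}\le C\,(E-V(\gamma_y))^{1/2}$ and the integrability of $(E-V(\gamma_y))^{-1}$ control the integrand along the unperturbed curve, but dominated convergence for the difference quotients requires a uniform integrable bound on the $s$-derivative of the integrand, i.e.\ on $\frac{\partial G}{\partial v}[\dot\Xi]$ and $\frac{\partial G}{\partial q}[\Xi]$ for the perturbed curves; near $\tau=1$, where the endpoints $Q_{y+s\xi}$ move along $\partial D$ and the velocities blow up, nothing you have said prevents $\dot\Xi$ from blowing up faster than $(E-V)^{-1/2}$.

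The paper avoids both difficulties by a different device: it uses a variation $v_\xi$ supported in $[0,1/2]$, where all the curves involved are uniformly away from $\partial D$ and $G$ is smooth and non-degenerate, and replaces the single first-variation identity by two one-sided comparisons. The upper bound $\psi(y+\epsilon\xi)-\psi(y)\le\tilde{\mathcal{J}}(\gamma_y+\epsilon v_\xi)-\tilde{\mathcal{J}}(\gamma_y)$ uses only the unperturbed minimizer; the lower bound compares $\gamma_{y+\epsilon\xi}$ with $\gamma_{y+\epsilon\xi}-\epsilon v_\xi$ and needs only the \emph{continuity} of $y\mapsto\dot\gamma_y(0)$, which follows from uniqueness (Proposition \ref{prop:psi-unique}) plus compactness, not differentiable dependence. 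If you want to keep your global integration-by-parts route, you must either prove the regularity of $\Xi$ up to the degenerate endpoint with quantitative control, or truncate the integral at $1-\eta$, pass to the limit $\eta\to 0$ using $\norm{\frac{\partial G}{\partial v}}\le C(E-V)^{1/2}\to 0$, and still justify the differentiation on $[0,1-\eta]$ uniformly in $\eta$ --- at which point the paper's localized two-sided argument is the more economical path.
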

\begin{proof}
	Let us fix $\xi \in \mathbb{R}^n$, $\norm{\xi} \le 1$ and 
	set
	\[
	v_\xi(s) = \max\{0, 1 - 2s\}\xi,
	\]
	hence $v_{\xi}(s) = 0$ for all $ s \in [1/2,1]$.
	Let us define $\tilde{\mathcal{J}}\colon W^{1,2}([0,1],D) \to \mathbb{R}$ as
	\begin{equation*}
	\label{eq:def-tildeJ}
	\tilde{\mathcal{J}}(\gamma) = 
	\int_{0}^{\frac{1}{2}}G(\gamma(s),\dot{\gamma}(s))\diff s.
	\end{equation*}
	Since the curve $\gamma_y|_{[0,1/2]}$ is uniformly far from $\partial D$,
	so are the curves
	$(\gamma_y + \epsilon v_\xi)|_{[0,1/2]}$
	for $\epsilon$ sufficiently small.
	Moreover, 
	by Proposition \ref{prop:psi-attains-min},
	there exists a constant $c_\gamma > 0$ such that
	$G(\gamma_y(s),\dot{\gamma}_y(s)) = c_\gamma$, for all $s \in [0,1[$.
	As a consequence, for $\epsilon$ sufficiently small,
	we can assume that 
	\begin{equation}
	\label{eq:diff-psi-proof1}
	\dot{\gamma}_y(s) + \sigma\epsilon \dot{v}_{\xi}(s) \ne 0,
	\quad \forall s \in \left[0,\frac{1}{2}\right],\ 
	\forall \sigma \in [0,1],
	\end{equation}
	thus 
	we shall work in a region where $G$ is of class $C^2$.
	By definition of $\psi$ we have
	\[
	\psi(y + \epsilon\xi) \le \mathcal{J}(\gamma_y + \epsilon v_\xi).
	\]
	Since $\psi(y) = \mathcal{J}(\gamma_y)$, we obtain
	\[
	\psi(y + \epsilon\xi) - \psi(y)
	\le \mathcal{J}(\gamma_y + \epsilon v_\xi) - \mathcal{J}(\gamma_y) = 
	\tilde{\mathcal{J}}(\gamma_y + \epsilon v_\xi) -
	\tilde{\mathcal{J}}(\gamma_y),
	\]
	hence
	\[
	\limsup_{\epsilon \to 0} 
	\frac{1}{\epsilon}\bigg(
	\psi(y + \epsilon\xi) - \psi(y)\bigg)
	\le
	\limsup_{\epsilon \to 0} 
	\frac{1}{\epsilon}\bigg(
	\tilde{\mathcal{J}}(\gamma_y + \epsilon v_\xi) -
	\tilde{\mathcal{J}}(\gamma_y) \bigg).
	\]
	Then, using the dominated convergence theorem, an integration by parts
	and recalling that $\gamma_y$ satisfies 
	\eqref{eq:cond-min-gammay}, we have
	\begin{multline*}
	\lim_{\epsilon \to 0} 
	\frac{1}{\epsilon}\bigg(
	\tilde{\mathcal{J}}(\gamma_y + \epsilon v_\xi) -
	\tilde{\mathcal{J}}(\gamma_y)\bigg)\\
	= \lim_{\epsilon \to 0}  \frac{1}{\epsilon}
	\int_{0}^{\frac{1}{2}} \bigg(
	G(\gamma_y + \epsilon v_\xi, \dot{\gamma}_y + \epsilon \dot{v}_\xi) - 
	G(\gamma_y,\dot{\gamma}_y)
	\bigg)\diff s \\
	=
	\int_{0}^{\frac{1}{2}} 
	\left(
	\frac{\partial G}{\partial q}(\gamma_y,\dot{\gamma}_y)[v_{\xi}] 
	+ \frac{\partial G}{\partial v}(\gamma_y,\dot{\gamma}_y)[\dot{v}_\xi]
	\right)\diff s  \\
	= \left[\frac{\partial G}{\partial v}(\gamma_y,\dot{\gamma}_y)
	[\dot{v}_\xi]
	\right]^{1/2}_0
	= 
	- \frac{\partial G}{\partial v}(y,\dot{\gamma}_y(0))[\xi],
	\end{multline*}
	hence
	\begin{equation*}
	\label{eq:diff-psi-proof2}
	\limsup_{\epsilon \to 0} 
	\frac{1}{\epsilon}\bigg(
	\psi(y + \epsilon\xi) - \psi(y)\bigg)
	\le 
	- \frac{\partial G}{\partial v}(y,\dot{\gamma}_y(0))[\xi].
	\end{equation*}
	It remains to prove that
	\begin{equation}
	\label{eq:diff-psi-proof3}
	\liminf_{\epsilon \to 0} 
	\frac{1}{\epsilon}\bigg(
	\psi(y + \epsilon\xi) - \psi(y)\bigg) \ge
	- \frac{\partial G}{\partial v}(y,\dot{\gamma}_y(0))[\xi].
	\end{equation}
	Since $\psi(y + \epsilon\xi) = \mathcal{J}(\gamma_{y + \epsilon \xi})$
	and 
	$\psi(y) \le \mathcal{J}(\gamma_{y + \epsilon \xi} - \epsilon v_\xi)$,
	we have
	\begin{equation}
	\label{eq:diff-psi-proof5}
	\psi(y + \epsilon \xi) - \psi(y)
	\ge \mathcal{J}(\gamma_{y + \epsilon\xi}) -
	\mathcal{J}(\gamma_{y + \epsilon \xi} - \epsilon v_\xi) 
	= 
	\tilde{ \mathcal{J}}(\gamma_{y + \epsilon\xi}) -
	\tilde{\mathcal{J}}(\gamma_{y + \epsilon \xi} - \epsilon v_\xi).
	\end{equation}
	
	By \eqref{eq:diff-psi-proof1}, 
	$\tilde{\mathcal{J}} $ is of class $C^2$ in a neighbourhood of $\gamma_y$.
	Hence, there exists some $\sigma_\epsilon \in ]0,1[$ such that 
	\begin{equation}
	\label{eq:diff-psi-proof4}
	\tilde{\mathcal{J}}(\gamma_{y + \epsilon\xi}) -
	\tilde{\mathcal{J}}(\gamma_{y + \epsilon \xi} - \epsilon v_\xi) 
	= \epsilon\diff\tilde{\mathcal{J}}(\gamma_{y + \epsilon\xi})[v_\xi]
	-\frac{\epsilon^2}{2}
	\diff^2\tilde{\mathcal{J}}(\gamma_{y + \epsilon\xi} -
	\sigma_\epsilon \epsilon v_\xi)[v_\xi,v_\xi]
	.
	\end{equation}
	
	Now, we are going to prove that
	\begin{equation}
	\label{eq:diff-psi-proof9}
	\lim_{\epsilon \to 0}
	\epsilon
	\diff^2\tilde{\mathcal{J}}(\gamma_{y + \epsilon\xi} - \sigma_\epsilon \epsilon v_\xi)[v_\xi,v_\xi]
	= 0.
	\end{equation}
	Since $\gamma_y$ is uniformly far from $\partial D$ on the interval $[0,1/2]$,
	the same holds for $\gamma_{y + \epsilon\xi}$ whenever $\epsilon$ is sufficiently small.
	As a consequence, there exists a constant $c_1 > 0$ such that
	\[
	\frac{1}{E - V(\gamma_{y + \epsilon\xi}(s))} \le c_1,
	\quad \forall s \in \left[0,\frac{1}{2}\right].
	\]
	Since $\gamma_{y +\epsilon\xi}$ is a minimal geodesic, we also have
	\[
	G(\gamma_{y + \epsilon\xi}(s),\dot{\gamma}_{y + \epsilon\xi}(s)) =
	\psi(y + \epsilon \xi),
	\quad \forall s \in \left[0,\frac{1}{2}\right].
	\]
	Moreover,
	using \eqref{eq:bound-G},
	there exists a constant $c_2 > 0$ such that 
	\begin{multline}
	\label{eq:diff-psi-proof10}
	\int_{0}^{\frac{1}{2}}\norm{\dot{\gamma}_{y + \epsilon\xi}(s)}^2\diff s 
	\le 
	\int_{0}^{\frac{1}{2}}
	\frac{2\nu_2}{E - V(\gamma_{y + \epsilon\xi}(s))}
	G(\gamma_{y + \epsilon\xi}(s),\dot{\gamma}_{y + \epsilon\xi}(s)) \diff s
	\\
	\le c_1 \nu_2 \psi(y + \epsilon \xi)
	\le c_2.
	\end{multline}
	Hence, $\gamma_{y + \epsilon\xi}$ is uniformly bounded in 
	$W^{1,2}([0,\frac{1}{2}],D)$.
	Since $v_{\xi}(s) = 0$ on $[0,\frac{1}{2}]$,
	we have that 
	$
	\diff^2\tilde{\mathcal{J}}(\gamma_{y + \epsilon\xi} - \sigma_\epsilon \epsilon v_\xi)[v_\xi,v_\xi]
	$
	is uniformly bounded with respect to $\epsilon$ sufficiently small, 
	hence \eqref{eq:diff-psi-proof9} holds.
	By \eqref{eq:diff-psi-proof4} and \eqref{eq:diff-psi-proof9} we have
	\begin{equation}
	\label{eq:diff-psi-proof6}
	\lim_{\epsilon \to 0}\frac{1}{\epsilon}\bigg(
	\tilde{\mathcal{J}}(\gamma_{y + \epsilon\xi}) -
	\tilde{\mathcal{J}}(\gamma_{y + \epsilon \xi} - \epsilon v_\xi)\bigg) 
	=
	\lim_{\epsilon \to 0}
	\diff\tilde{\mathcal{J}}(\gamma_{y + \epsilon\xi})[v_{\xi}].
	\end{equation}
	Since $\gamma_{y + \epsilon\xi}$ satisfies \eqref{eq:cond-min-gammay},
	integration by parts leads to  
	\begin{equation}
	\label{eq:diff-psi-proof8}
	\diff\tilde{\mathcal{J}}(\gamma_{y + \epsilon\xi})[v_{\xi}] 
	= -\frac{\partial G}{\partial v}(y + \epsilon\xi,\dot{\gamma}_{y + \epsilon\xi}(0))
	[\xi].
	\end{equation}
	To obtain \eqref{eq:diff-psi-proof3} and conclude the proof,
	it suffices to show that 
	\begin{equation}
	\label{eq:diff-psi-proof99}
	\lim_{\epsilon \to 0} \dot{\gamma}_{y + \epsilon \xi}(0) = 
	\dot{\gamma}_y(0).
	\end{equation}
	To this aim, we exploit the uniqueness of $\gamma_y$
	ensured by Proposition \ref{prop:psi-unique}.
	Arguing by contradiction, 
	let 
	$(\epsilon_n)$ be a sequence such that 
	$\epsilon_n \to 0$ and 
	\[
	\lim_{n \to \infty} \dot{\gamma}_{y + \epsilon_n \xi}(0) \ne 
	\dot{\gamma}_y(0).
	\]
	By \eqref{eq:diff-psi-proof10}, 
	$\gamma_{y + \epsilon \xi}$ are uniformly bounded in $W^{1,2}([0,1],\overline{D})$,
	hence there exists $v \in \mathbb{R}^n$ such that
	$\lim_{n \to \infty} \dot{\gamma}_{y + \epsilon_n \xi}(0) = v \ne \dot{\gamma}_y(0)$.
	Since $(\gamma_{y + \epsilon_n \xi})$ is a sequence of geodesics, 
	it converges with respect to the $C^1$ norm
	to a minimum.
	Since the minimum is unique by Proposition \ref{prop:psi-unique},
	then $\gamma_{y + \epsilon_n \xi}\to \gamma_y$ in $C^1$,
	so
	$\dot{\gamma}_{y + \epsilon_n \xi}(0) \to \dot{\gamma}_y(0)$,
	which is a contradiction.
	
	Therefore,  \eqref{eq:diff-psi-proof99} holds and using also
	\eqref{eq:diff-psi-proof8} we have
	\begin{equation}
	\label{eq:diff-psi-proof7}
	\lim_{\epsilon \to 0}
	\diff\mathcal{J}(\gamma_{y + \epsilon\xi})[v_{\xi}]
	= - \frac{\partial G}{\partial v}(y,\dot{\gamma}_y(0))[\xi].
	\end{equation}
	Finally, combining
	\eqref{eq:diff-psi-proof5},
	\eqref{eq:diff-psi-proof6} and
	\eqref{eq:diff-psi-proof7},
	we obtain \eqref{eq:diff-psi-proof3}
	and we are done.
\end{proof}

The next lemma will play a central role 
because it links the initial velocity of the curve
$\gamma_y$ with $\dot{q}(t_y,Q_y)$ through a function of class $C^1$.
\begin{lemma}
	\label{lem:def-varphi}
	There exists a function
	$\varphi\colon D_{\bar\delta}\times \mathbb{R}^n \to \mathbb{R}$
	of class $C^1$ such that
	\begin{equation}
	\label{eq:from-qy-to-gammay}
	\sqrt{\psi(y)}\  \varphi(y,\dot{q}(t_y,Q_y))\dot{q}(t_y,Q_y) = -\dot{\gamma}_y(0).
	\end{equation}
\end{lemma}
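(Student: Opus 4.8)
The plan is to show that the required function is simply the reciprocal of the Finsler norm, that is $\varphi(y,w) = 1/\sqrt{G(y,w)}$, and that \eqref{eq:from-qy-to-gammay} then follows by combining the reparametrization formula of Lemma \ref{lem:fromGtoH} with the degree‑$2$ homogeneity of $G$. Throughout I fix $y \in D_{\bar\delta}$, let $\gamma_y$ be the minimizer of Proposition \ref{prop:psi-attains-min}, which is unique by Proposition \ref{prop:psi-unique} and satisfies $G(\gamma_y(s),\dot\gamma_y(s)) \equiv \psi(y)$, and I abbreviate $w = \dot q(t_y,Q_y)$.

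First I would compute the initial velocity of the Hamiltonian solution produced by reparametrizing $\gamma_y$. By Proposition \ref{prop:psi-attains-min} the curve $q_H := \gamma_y\circ\sigma$ is a solution of \eqref{eq:hamiltonianSys} with $q_H(0) = y$, where $\sigma$ is the inverse of the time reparametrization $t(s)$ of Lemma \ref{lem:fromGtoH} with $c_\gamma = \psi(y)$. Differentiating $t(s)$ at $s=0$ gives $t'(0) = \sqrt{\psi(y)}\,\phi\bigl(y,\dot\gamma_y(0)/\sqrt{\psi(y)}\bigr)$, whence $\sigma'(0) = 1/t'(0)$ and, by the chain rule,
\[
\dot q_H(0) = \sigma'(0)\,\dot\gamma_y(0) = \frac{\dot\gamma_y(0)}{\sqrt{\psi(y)}\,\phi\bigl(y,\dot\gamma_y(0)/\sqrt{\psi(y)}\bigr)}.
\]
Next I would identify $\dot q_H(0)$ with $-w$. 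By Remark \ref{rmk:gamma_y-qtQ_y}, $\gamma_y$ and $q(\cdot,Q_y)$ trace the same curve with opposite orientations, so $q_H$ and $q(\cdot,Q_y)$ are mutually time‑reversed solutions of \eqref{eq:hamiltonianSys} with common endpoint $y$; since $H$ is even in $p$, the time reversal $t\mapsto q(t_y-t,Q_y)$ is again a solution starting at $y$ with initial velocity $-w$, and by uniqueness of the Cauchy problem it coincides with $q_H$. Thus $\dot q_H(0) = -w$, and combining this with the previous display yields the implicit relation
\[
\dot\gamma_y(0) = -\sqrt{\psi(y)}\,\phi\Bigl(y,\tfrac{\dot\gamma_y(0)}{\sqrt{\psi(y)}}\Bigr)\,w.
\]

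The key step is to eliminate the self‑reference in this relation. Set $u = \dot\gamma_y(0)/\sqrt{\psi(y)}$; since $G$ is homogeneous of degree $2$ in $v$ we get $G(y,u)=1$, so $(y,u)\in\mathcal L(\Sigma)$ and $\phi(y,u)$ is defined. Dividing the last display by $\sqrt{\psi(y)}$ gives $u = -\phi(y,u)\,w$; applying $G(y,\cdot)$ to both sides and using that $G$ is homogeneous of degree $2$ and even in $v$ (the latter inherited from the evenness of $U$ in $p$ in Lemma \ref{lem:fromH-toU}), together with $\phi>0$, I obtain $1 = G(y,u) = \phi(y,u)^2\,G(y,w)$, so $\phi(y,u) = 1/\sqrt{G(y,w)}$. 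Substituting back into the implicit relation gives
\[
-\dot\gamma_y(0) = \sqrt{\psi(y)}\,\frac{1}{\sqrt{G(y,w)}}\,w,
\]
which is precisely \eqref{eq:from-qy-to-gammay} with $\varphi(y,w) = 1/\sqrt{G(y,w)}$.

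It remains to verify regularity at the relevant arguments. For $y \in D_{\bar\delta}\subset D$ the point $(y,p_y) := (q(t_y,Q_y),p(t_y,Q_y))$ lies on $\Sigma$, so $p_y\neq 0$ and, by \eqref{eq:bound-deriv-K-nu}, $w = \dot q(t_y,Q_y) = (\partial H/\partial p)(y,p_y)\neq 0$; hence $G(y,w)>0$ by \eqref{eq:bound-G}. Since $G$ is of class $C^2$ and strictly positive on the open set $D\times(\mathbb{R}^n\setminus\{0\})$, the function $\varphi = 1/\sqrt{G}$ is of class $C^1$ (indeed $C^2$) there, and every argument $(y,w)$ occurring in \eqref{eq:from-qy-to-gammay} lies in this set; as $y\mapsto\dot q(t_y,Q_y)$ is $C^1$ and nonvanishing on $D_{\bar\delta}$, the composite map $y\mapsto\varphi(y,\dot q(t_y,Q_y))$ is $C^1$, which is what is used later. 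I expect the main obstacle to be exactly the implicit relation above: the factor $\phi$ of Lemma \ref{lem:fromGtoH} is evaluated at the unknown $\dot\gamma_y(0)$ itself, and only the degree‑$2$ homogeneity of $G$ together with the normalization $G(y,u)=1$ converts it into the explicit, manifestly $C^1$ expression $1/\sqrt{G}$; some care is also needed in the sign bookkeeping of the time reversal, where the evenness of $H$ in $p$ is essential.
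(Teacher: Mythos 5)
Your proof is correct, and it reaches the same conclusion as the paper by a route that is parallel in its first half but genuinely different in its last step. The paper works with the backward arc-length reparametrization $\zeta_y$ of $\gamma_y$, identifies $\mathcal{L}^{-1}(\zeta_y,\dot\zeta_y)$ with $z(\cdot,Q_y)$ via Remark \ref{rmk:gamma_y-qtQ_y}, applies \eqref{eq:fromZtoX} to get $\dot\zeta_y(\sqrt{\psi(y)})=\frac{\norm{U'(y,p(t_y,Q_y))}}{\norm{H'(y,p(t_y,Q_y))}}\dot q(t_y,Q_y)$, and then \emph{defines} $\varphi$ implicitly as this ratio composed with the inverse of the Legendre map $p\mapsto(\partial H/\partial p)(y,p)$; the $C^1$ regularity is then read off from that composition. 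You instead eliminate the self-reference through the degree-$2$ homogeneity and evenness of $G$ together with the normalization $G(y,u)=1$, arriving at the closed form $\varphi(y,w)=1/\sqrt{G(y,w)}$. The two definitions agree on the relevant set: for $(y,p)\in\Sigma$ the gradients $U'$ and $H'$ are parallel, say $U'=\mu H'$, so $\partial U/\partial p=\mu w$ and $1=U(y,p)=G(y,\mu w)=\mu^2G(y,w)$ by Euler's identity, which is exactly your computation. Your explicit formula is arguably cleaner and makes the regularity immediate away from $w=0$ (the degeneracy at $w=0$, which you flag, is equally present in the paper's version, since $\dot q(t_y,Q_y)\to0$ as $y\to\partial D$; in both proofs only the $C^1$-ness of $y\mapsto\varphi(y,\dot q(t_y,Q_y))$ on $D_{\bar\delta}$ is actually used). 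One small gloss: your appeal to uniqueness of the Cauchy problem to identify $q_H$ with $t\mapsto q(t_y-t,Q_y)$ needs the initial momenta, not just the initial positions, to coincide; this follows because both curves are the Hamiltonian reparametrizations of the same oriented geodesic arc at energy $E$ (equivalently, strict convexity of $H$ in $p$ makes the direction of $\dot q$ on the energy shell determine $p$), which is precisely the content of Remark \ref{rmk:gamma_y-qtQ_y} that you cite, so this is not a gap.
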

\begin{proof}
	Let $\zeta_y\colon [0,\sqrt{\psi(y)}] \to \overline{D}$
	the backward arc-length reparametrization of $\gamma_y$,
	namely
	\[
	\zeta_y(s) = \gamma_y\left(1 - \frac{s}{\sqrt{\psi(y)}}\right).
	\]
	As a consequence,
	\begin{equation}
	\label{eq:varphi-proof1}
	\dot{\zeta}_y(\sqrt{\psi(y)})
	=- \frac{1}{\sqrt{\psi(y)}}\dot{\gamma}_y(0).
	\end{equation}
	By Lemma \ref{lem:def-G},
	the curve $x\colon [0,\sqrt{\psi(y)}] \to \mathbb{R}^{2n}$ given by
	\[
	x(s) = \mathcal{L}^{-1}\left(\zeta_y(s),\dot{\zeta}_y(s)\right),
	\]
	is a solution of Hamilton's equations with respect to $U$
	and $U(x(s))\equiv 1$.
	Since $y \in D_{\bar\delta} $,
	By Lemma \ref{lem:def-bardelta} and
	Remark \ref{rmk:gamma_y-qtQ_y},
	$x(s)$ is actually a reparametrization of 
	$z(t,Q_y) = (q(t,Q_y),p(t,Q_y))$, with 
	$x(\sqrt{\psi(y)}) = z(t_y,Q_y)$.
	Hence, using \eqref{eq:fromZtoX} and
	recalling that $\mathcal{L}$ is the identity map
	with respect to the first variable, we have
	\begin{multline}
	\label{eq:varphi-proof2}
	\dot{\zeta}_y(\sqrt{\psi(y)}) =
	\frac{\norm{U'(z(t_y,Q_y))}}{\norm{H'(z(t_y,Q_y))}}
	\dot{q}(t_y,Q_y) \\
	=
	\frac{\norm{U'(y,p(t_y,Q_y))}}{\norm{H'(y,p(t_y,Q_y))}}
	\dot{q}(t_y,Q_y).
	\end{multline}
	Since the map
	\[
	p(t_y,Q_y) \mapsto \dot{q}(t_y,Q_y)
	= \frac{\partial H}{\partial p}\left(y,p(t_y,Q_y)\right)
	\]
	is invertible, there exists 
	$\varphi\colon D_{\bar\delta}\times \mathbb{R}^n\to \mathbb{R}$
	such that 
	\begin{equation}
	\label{eq:varphi-proof3}
	\varphi(y,\dot{q}(t_y,Q_y))
	= 
	\frac{\norm{U'(y,p(t_y,Q_y))}}{\norm{H'(y,p(t_y,Q_y))}}.
	\end{equation}
	Combining \eqref{eq:varphi-proof1},
	\eqref{eq:varphi-proof2} and \eqref{eq:varphi-proof3},
	we obtain \eqref{eq:from-qy-to-gammay}.
	Recalling that both $t_y$ and $Q_y$ are 
	of class $C^1$ by Remark \ref{rmk:tyQyC1},
	and that
	$\dot{q}(t_y,Q_y) \ne 0$
	for every $y \in D_{\bar\delta}$,
	the function $\varphi$ is of class $C^1$
	as a composition of the derivatives of $H$ and $U$,
\end{proof}

\begin{lemma}
	\label{lem:psiC2}
	The function $\psi$ is of class $C^2$ in $D_{\bar\delta}$.
\end{lemma}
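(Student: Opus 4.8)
The plan is to upgrade the first-order regularity already available from Proposition~\ref{prop:psi-differentiable} to second order by differentiating the explicit formula \eqref{eq:diff-psi} for $\diff\psi$. Recall that for $y \in D_{\bar\delta}$ one has $\diff\psi(y) = -\frac{\partial G}{\partial v}(y,\dot\gamma_y(0))$. Since $G$ is of class $C^2$ on $D\times(\mathbb{R}^n\setminus\{0\})$ by Lemma~\ref{lem:def-G}, its derivative $\partial G/\partial v$ is of class $C^1$ there. Hence, to conclude that $\diff\psi$ is $C^1$ --- equivalently that $\psi\in C^2$ --- it will suffice to establish two things: first, that the map $y\mapsto \dot\gamma_y(0)$ is of class $C^1$ on $D_{\bar\delta}$; and second, that $\dot\gamma_y(0)\ne 0$ for every such $y$, so that the pair $(y,\dot\gamma_y(0))$ stays inside the region $D\times(\mathbb{R}^n\setminus\{0\})$ where $\partial G/\partial v$ is smooth.

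The key ingredient for the first point is Lemma~\ref{lem:def-varphi}. Rearranging \eqref{eq:from-qy-to-gammay} gives
\[
\dot\gamma_y(0) = -\sqrt{\psi(y)}\,\varphi\bigl(y,\dot q(t_y,Q_y)\bigr)\,\dot q(t_y,Q_y),
\]
so it is enough to check that each factor on the right is of class $C^1$ in $y$. The function $\varphi$ is $C^1$ by Lemma~\ref{lem:def-varphi}. The map $y\mapsto \dot q(t_y,Q_y)$ is $C^1$ as a composition: $y\mapsto(t_y,Q_y)$ is $C^1$ by Remark~\ref{rmk:tyQyC1}, while $(t,Q)\mapsto \dot q(t,Q)$ is $C^1$ by Remark~\ref{rmk:dzC1}. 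Finally, $\psi$ is already known to be of class $C^1$ on $D_{\bar\delta}$: it is differentiable by Proposition~\ref{prop:psi-differentiable}, and the continuity of its differential follows from the continuity of $y\mapsto\dot\gamma_y(0)$ guaranteed by the uniqueness of the minimizer in Proposition~\ref{prop:psi-unique}, exactly as in the argument producing \eqref{eq:diff-psi-proof99}. Since $\psi(y)>0$ for $y\in D$, the composition $\sqrt{\psi}$ is then $C^1$ as well, and multiplying these $C^1$ factors yields that $y\mapsto\dot\gamma_y(0)$ is $C^1$.

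For the nonvanishing, I note that $\gamma_y$ is a constant-speed geodesic with $G(\gamma_y(s),\dot\gamma_y(s))\equiv\psi(y)>0$, and since $G$ is positive definite off the zero section by \eqref{eq:bound-G}, this forces $\dot\gamma_y(0)\ne 0$. Combining the two points, the map $y\mapsto(y,\dot\gamma_y(0))$ is $C^1$ and takes values in $D\times(\mathbb{R}^n\setminus\{0\})$, so its composition with the $C^1$ map $\partial G/\partial v$ is $C^1$; by \eqref{eq:diff-psi} this composition is precisely $-\diff\psi$, whence $\psi\in C^2$. The point requiring care is that the bootstrap is not circular: the passage through $\sqrt{\psi}$ only uses the $C^1$ regularity of $\psi$ already in hand, which is genuinely weaker than the $C^2$ conclusion being established. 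I expect the verification that $\psi$ is $C^1$ (rather than merely differentiable) to be the most delicate step, since it rests on the uniqueness and the $C^1$-convergence of the minimizing geodesics near the boundary rather than on a direct computation.
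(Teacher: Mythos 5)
Your proposal is correct and follows essentially the same bootstrap as the paper: use \eqref{eq:from-qy-to-gammay} together with the $C^1$-regularity of $\varphi$ and $\dot q(t_y,Q_y)$ to upgrade $\dot\gamma_y(0)$ from continuous to $C^1$, then conclude via \eqref{eq:diff-psi}. The extra checks you include (that $\dot\gamma_y(0)\ne 0$ so $\partial G/\partial v$ is evaluated where $G$ is $C^2$, and that the passage through $\sqrt{\psi}$ is not circular) are details the paper leaves implicit, and they are correct.
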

\begin{proof}
	By \eqref{eq:from-qy-to-gammay}, we deduce that $\dot{\gamma}_y(0)$ is continuous
	as a function of $y \in D_{\bar\delta}$.
	Hence, by \eqref{eq:diff-psi}, $\psi$ is of class $C^1$ in $D_{\bar\delta}$.
	Using again \eqref{eq:from-qy-to-gammay} and
	the $C^1$-regularity of $\varphi$ and $\dot{q}(t_y,Q_y)$,
	we deduce that $\dot{\gamma}_y(0)$ is of class $C^1$.
	Using \eqref{eq:diff-psi}, we obtain the thesis.
\end{proof}

Recalling the notion of concavity given in 
Definition \ref{def:Finsler-concavity}
and the definition of $H_{\psi}(y,v)[v,v]$ in \eqref{eq:def-Concave},
the next proposition shows that the set 
$\psi([\hat\delta,\infty[)$ is concave,
provided $\hat\delta > 0$ sufficiently small.

\begin{proposition}
	\label{prop:concavity}
	There exists $\hat\delta \in ]0,\bar\delta]$ such that,
	for every $y \in D$ with
	$0 < \text{dist}(y,\partial D) \le \hat\delta$,
	\begin{equation*}
	\label{eq:concavity-psi}
	H_{\psi}(y,\xi)[\xi,\xi] > 0,
	\quad \forall \xi \in \mathbb{R}^n\backslash\left\{0\right\}:
	\diff\psi(y)[\xi] = 0.
	\end{equation*}
\end{proposition}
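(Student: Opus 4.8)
The plan is to exploit the correspondence between Finsler geodesics and solutions of Hamilton's equations in order to transfer the second--order condition \eqref{eq:def-Concave} to an estimate along an energy--$E$ trajectory, where the strict concavity of $V$ supplied by Lemma \ref{lem:d2Vepsilon} becomes available. First I would fix $y\in D$ near $\partial D$ and $\xi\in\mathbb{R}^n\setminus\{0\}$ with $\diff\psi(y)[\xi]=0$, and let $\gamma$ be the geodesic of \eqref{eq:def-Concave} with $\gamma(0)=y$, $\dot\gamma(0)=\xi$. By Lemma \ref{lem:def-G} and Lemma \ref{lem:fromGtoH}, up to an orientation--preserving reparametrization $s=s(t)$ with $s(0)=0$ and $s'(0)\neq 0$, the curve $c(t)=\gamma(s(t))$ solves \eqref{eq:hamiltonianSys} with energy $E$, $c(0)=y$ and $\dot c(0)$ parallel to $\xi$. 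Since $\diff\psi(y)[\xi]=0$, the map $\psi\circ\gamma$ has a critical point at $s=0$, hence $\tfrac{\diff}{\diff t}(\psi\circ c)(0)=0$ and the chain rule gives $\tfrac{\diff^2}{\diff t^2}(\psi\circ c)(0)=(s'(0))^2\,H_\psi(y,\xi)[\xi,\xi]$. As $s'(0)\neq 0$, it suffices to prove that $\tfrac{\diff^2}{\diff t^2}(\psi\circ c)(0)>0$ for every energy--$E$ solution $c$ with $c(0)=y$ tangent to the level set $\{\psi=\psi(y)\}$, once $y$ is close enough to $\partial D$.

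Since $\psi$ is of class $C^2$ on $D_{\bar\delta}$ by Lemma \ref{lem:psiC2}, I would then expand
\[
\frac{\diff^2}{\diff t^2}(\psi\circ c)(0)
= D^2\psi(y)[\dot c(0),\dot c(0)] + \diff\psi(y)[\ddot c(0)],
\]
and compare it with $\tfrac{\diff^2}{\diff t^2}(V\circ c)(0)=D^2V(y)[\dot c(0),\dot c(0)]+\sprod{\nabla V(y),\ddot c(0)}$. The mechanism behind the result is that, near $\partial D$, $\psi$ is to leading order a strictly increasing function of the single scalar $E-V$: this is exact in the frozen linear model $V(q)=E-\sprod{\nabla V(Q_0),q-Q_0}$ with constant kinetic form $\tfrac{\partial^2 H}{\partial p^2}(Q_0,0)$, where $\psi=g(E-V)$ with $g(0)=0$ and $g'>0$ (indeed $g(r)$ is proportional to $r^3$, so $g'(E-V(y))$ is of order $(E-V(y))^2$). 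Writing $\nabla\psi=-g'(E-V)\nabla V$ yields
\[
\frac{\diff^2}{\diff t^2}(\psi\circ c)(0)
= g''(E-V)\Big(\tfrac{\diff}{\diff t}(V\circ c)(0)\Big)^2
- g'(E-V)\,\frac{\diff^2}{\diff t^2}(V\circ c)(0).
\]
On a tangent direction the first--order term $\tfrac{\diff}{\diff t}(\psi\circ c)(0)=-g'(E-V)\tfrac{\diff}{\diff t}(V\circ c)(0)$ vanishes, forcing $\tfrac{\diff}{\diff t}(V\circ c)(0)=0$; since $g'>0$ and, by Lemma \ref{lem:d2Vepsilon}, $\tfrac{\diff^2}{\diff t^2}(V\circ c)(0)\le-\bar\epsilon<0$ for $y$ near $\partial D$, the surviving term is strictly positive. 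The positivity thus comes from the acceleration term $\diff\psi(y)[\ddot c(0)]$: by Lemma \ref{lem:dotq-boundarySolutions}, $\ddot c(0)$ is, to leading order as $y\to\partial D$, $-\tfrac{\partial^2 H}{\partial p^2}(Q_0,0)\nabla V(Q_0)$, and paired with the inward--pointing $\nabla\psi$ it is positive by the positive--definiteness \eqref{eq:bound-Hessian-K-nu}.

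The main obstacle is that $\psi$ is \emph{not} exactly a function of $E-V$: the level sets of $\psi$ and of $V$ differ, the kinetic form $\tfrac{\partial^2 H}{\partial p^2}$ and $\nabla V$ vary along $\partial D$, and $\partial D$ is curved. I must therefore control these discrepancies and prove they are negligible against the leading term, which is of order $(E-V(y))^2$. To this end I would use the representation \eqref{eq:diff-psi} of $\diff\psi$ together with \eqref{eq:from-qy-to-gammay}, which encodes $\dot\gamma_y(0)$ through the $C^1$ data $t_y$, $Q_y$ and $\dot q(t_y,Q_y)$; using the homogeneity and evenness of $G$ in $v$ this gives $\nabla\psi(y)=\sqrt{\psi(y)}\,\varphi\big(y,\dot q(t_y,Q_y)\big)\big(\tfrac{\partial G}{\partial v}(y,\dot q(t_y,Q_y))\big)^{\!\top}$ and, after one more differentiation, a workable formula for $D^2\psi(y)$. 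The cleanest way to conclude is a blow--up argument: rescaling coordinates near an arbitrary $Q_0\in\partial D$ as $E-V(y)\to0$, the rescaled Hamiltonian converges to the linear model above, for which the computation of the previous paragraph gives $H_\psi(y,\xi)[\xi,\xi]>0$ strictly; by the uniform bounds \eqref{eq:bound-G} and \eqref{eq:bound-Hessian-K-nu}, the $C^2$--regularity of Lemma \ref{lem:psiC2}, and the compactness of $\partial D$, this strict inequality persists uniformly in $Q_0$ for all $y$ with $\text{dist}(y,\partial D)\le\hat\delta$, provided $\hat\delta>0$ is small enough. Establishing this uniform control of the error terms is the technical heart of the argument.
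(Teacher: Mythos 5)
Your proposal follows the same overall route as the paper's proof: pass to the Hamiltonian--time parametrization $c$ (legitimate, since $\diff\psi(y)[\xi]=0$ kills the reparametrization term), split
\[
\frac{\diff^2}{\diff t^2}(\psi\circ c)(0)=D^2\psi(y)[\dot c(0),\dot c(0)]+\diff\psi(y)[\ddot c(0)],
\]
and argue that the second summand is the dominant positive contribution, of order $(E-V(y))^2$, because $\ddot c(0)\to-\frac{\partial^2 H}{\partial p^2}(Q_y,0)\nabla V(Q_y)$ while $\diff\psi(y)$ is asymptotically a positive multiple of $-\nabla V(y)$. This is exactly the paper's decomposition into $I_1(y)+I_2(y)$ (after factoring out $\sqrt{\psi(y)}\,\varphi$ and $E-V(y)$ and working with $\Gamma=G/(E-V)$), and your identification of the sign mechanism, including the role of \eqref{eq:bound-Hessian-K-nu}, is correct.

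The gap is that the decisive step --- showing the first summand is $o\bigl((E-V(y))^2\bigr)$ uniformly --- is asserted rather than proved, and the device you propose to close it does not work as stated. A blow-up to the frozen linear model followed by ``persistence of the strict inequality'' is vacuous here: the quantity to be bounded below, $H_\psi(y,\xi)[\xi,\xi]$ with $\norm{\xi}$ normalized, itself tends to $0$ as $y\to\partial D$, so positivity of the limit model gives no information about nearby $y$ unless you first divide by the correct vanishing rate and prove convergence of the rescaled quantity with uniform control of the remainder. That control cannot be extracted from the $C^2$-regularity of Lemma \ref{lem:psiC2} and compactness alone; it requires differentiating the representation $\diff\psi(y)=-\frac{\partial G}{\partial v}(y,\dot\gamma_y(0))$ through \eqref{eq:from-qy-to-gammay}, i.e.\ estimating $\frac{\diff}{\diff s}\dot q(t_{\zeta(s)},Q_{\zeta(s)})\big|_{s=0}$ and $\diff t_y[w]$. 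The paper does precisely this: using Lemma \ref{lem:dotq-boundarySolutions}, the $(t,Q)$ coordinates of Lemma \ref{lem:def-bardelta}, and the identity $\diff t_y[v]\,\dot q(t_y,Q_y)+\frac{\partial q}{\partial Q}\frac{\partial Q_y}{\partial y}[v]=v$, it shows $\diff t_y[w]\to 0$, hence $\frac{\diff}{\diff s}\dot q(t_{\zeta(s)},Q_{\zeta(s)})\big|_{s=0}=o(1)$, which together with $\norm{w}^2$ and $\norm{\dot q(t_y,Q_y)}^2$ being comparable to $E-V(y)$ and the homogeneity bounds on $\Gamma$ yields $I_1(y)=o(\sqrt{E-V(y)})$ against $I_2(y)\ge c\sqrt{E-V(y)}$. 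Relatedly, your claim that tangency to $\{\psi=\psi(y)\}$ ``forces'' $\frac{\diff}{\diff t}(V\circ c)(0)=0$ holds only in the exact model $\psi=g(E-V)$; in general the level sets of $\psi$ and $V$ differ, and a rate for the angle between $\nabla\psi$ and $\nabla V$ is needed to dispose of the $g''$-term. Without these estimates the argument is incomplete.
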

\begin{proof}
	For every $\xi \in \mathbb{R}^n\backslash\{0\}$
	such that $\diff\psi(y)[\xi]=0$,
	we denote by $\eta$ the unique Finsler geodesic such that
	$\eta(0) = y$ and $\dot{\eta}(0) = \xi$.
	We have to prove that, for $y$ sufficiently near the boundary $\partial D$,
	\[
	\frac{\diff^2}{\diff s^2}(\psi\circ\eta)(0) > 0.
	\]
	Let $\zeta$ be the reparametrization of $\eta$
	which is a solution of \eqref{eq:hamiltonianSys}
	with energy $E$.
	By Remark \ref{rmk:fromUtoH},
	there exists a function $\lambda$ of class $C^2$ such that
	$\zeta(t) = \eta(\lambda(t))$, $\lambda(0) = 0$ and $\dot{\lambda}(t) > 0$.
	Hence,
	\[
	\frac{\diff^2}{\diff s^2}(\psi\circ\zeta)(0)  = 
	\dot{\lambda}(0) \frac{\diff^2}{\diff s^2}(\psi\circ\eta)(0)+
	\ddot{\lambda}(0)\diff\psi(y)[\xi] 
	=
	\dot{\lambda}(0)	\frac{\diff^2}{\diff s^2}(\psi\circ\eta)(0).
	\]
	As a consequence, it suffices to prove that 
	\[
	\frac{\diff^2}{\diff s^2}(\psi \circ \zeta)(0)> 0.
	\]
	Using \eqref{eq:diff-psi} and \eqref{eq:from-qy-to-gammay}  we obtain
	\[
	\begin{multlined}
	\frac{\diff}{\diff s}\psi(\zeta(s))=
	\diff\psi(\zeta(s))[\dot{\zeta}(s)]=
	-\frac{\partial G}{\partial v}\left(\zeta(s),\dot{\gamma}_{\zeta(s)}(0)\right)[\dot{\zeta}(s)]\\
	= \sqrt{\psi(\zeta(s))}\varphi\left(\zeta(s),\dot{q}(t_{\zeta(s)},Q_{\zeta(s)})\right)
	\frac{\partial G}{\partial v}(\zeta(s),\dot{q}(t_{\zeta(s)},Q_{\zeta(s)}))[\dot{\zeta}(s)].
	\end{multlined}
	\]
	Let us set $w = \dot{\zeta}(0)$.
	Since $\dot{q}(t_y,Q_y)$ is parallel to $\dot\gamma_y(0)$, we have
	\[
	\frac{\partial G}{\partial v}(y,\dot{q}(t_y,Q_y))[w] = 0,
	\]
	thus we obtain
	\begin{multline*}
	\frac{\diff^2}{\diff s^2}(\psi\circ\zeta)(0) \\
	=
	\frac{\diff}{\diff s}
	\left(\sqrt{\psi(\zeta(s))}\varphi(\zeta(s),\dot{q}(t_{\zeta(s)},Q_{\zeta(s)}))\right)\bigg|_{s = 0}
	\frac{\partial G}{\partial v}(y,\dot{q}(t_y,Q_y))[w] \\
	+
	\sqrt{\psi(y)} \varphi(y,\dot{q}(t_y,Q_y))
	\frac{\diff }{\diff s}
	\left(
	\frac{\partial G}{\partial v}(\zeta(s),\dot{q}(t_{\zeta(s)},Q_{\zeta(s)}))[\dot{\zeta}(s)]
	\right)\bigg|_{s = 0}\\
	=
	\sqrt{\psi(y)} \varphi(y,\dot{q}(t_y,Q_y))
	\frac{\diff }{\diff s}
	\left(
	\frac{\partial G}{\partial v}(\zeta(s),\dot{q}(t_{\zeta(s)},Q_{\zeta(s)}))[\dot{\zeta}(s)]
	\right)\bigg|_{s = 0}.
	\end{multline*}
	Since $\psi(y) $ and $\varphi(y,\dot{q}(t_y,Q_y))$
	are two strictly positive functions with respect to $y \in D$,
	it remains to prove that, for $y$ sufficiently near the boundary $\partial D$, we have
	\begin{equation}
	\label{eq:concavity-Proof1}
	\frac{\diff }{\diff s}
	\left(
	\frac{\partial G}{\partial v}(\zeta(s),\dot{q}(t_{\zeta(s)},Q_{\zeta(s)}))[\dot{\zeta}(s)]
	\right)\bigg|_{s = 0} > 0.
	\end{equation}
	Let us define
	the function
	$\Gamma\colon D\times \mathbb{R}^n\to \mathbb{R}$ as
	\begin{equation*}
	\label{eq:def-Gamma}
	\Gamma(q,v) = \frac{G(q,v)}{ E - V(q)}.
	\end{equation*}
	Since $\diff\psi(y)[w] = 0$,
	we have 
	\[
	\frac{\partial \Gamma}{\partial v}(y,\dot{q}(t_y,Q_y))[w] = 
	\frac{1}{E - V(y)}
	\frac{\partial G}{\partial v}(y,\dot{q}(t_y,Q_y))[w] = 0.
	\]
	As a consequence,
	\begin{equation*}
	\begin{multlined}
	\frac{\diff }{\diff s}
	\left(
	\frac{\partial G}{\partial v}(\zeta(s),\dot{q}(t_{\zeta(s)},Q_{\zeta(s)}))[\dot{\zeta}(s)]
	\right)\bigg|_{s = 0} \\
	= 
	\left(E - V(y)\right)
	\frac{\diff}{\diff s}
	\left(
	\frac{\partial \Gamma}{\partial v}(\zeta(s),\dot{q}(t_{\zeta(s)},Q_{\zeta(s)}))[\dot{\zeta}(s)]
	\right)\bigg|_{s = 0}.
	\end{multlined}
	\end{equation*}
	Hence, to obtain \eqref{eq:concavity-Proof1}, it suffices to prove that
	\begin{equation}
	\label{eq:concavity-Proof2}
	\frac{\diff }{\diff s}
	\left(
	\frac{\partial \Gamma}{\partial v}(\zeta(s),\dot{q}(t_{\zeta(s)},Q_{\zeta(s)}))[\dot{\zeta}(s)]
	\right)\bigg|_{s = 0} > 0,
	\end{equation}
	for $y$ sufficiently near the boundary $\partial D$.
	Setting
	\begin{align*}
	I_1(y) &=
	\frac{\diff }{\diff s}
	\left(
	\frac{\partial \Gamma}{\partial v}(\zeta(s),\dot{q}(t_{\zeta(s)},Q_{\zeta(s)}))
	\right)\bigg|_{s = 0} [w],\\
	I_2(y) &= \frac{\partial \Gamma}{\partial v}(y,\dot{q}(t_y,Q_y))[\ddot{\zeta}(0)],
	\end{align*}
	we have
	\[
	\frac{\diff }{\diff s}
	\left(
	\frac{\partial \Gamma}{\partial v}(\zeta(s),\dot{q}(t_{\zeta(s)},Q_{\zeta(s)}))[\dot{\zeta}(s)]
	\right)\bigg|_{s = 0} = I_1(y) + I_2(y).
	\]
	Let us study $I_1(y)$ and $I_2(y)$ separately.
	We have
	\begin{multline*}
	I_1(y) =
	\frac{\partial^2 \Gamma}{\partial q \partial v}
	\left(y,\dot{q}(t_y,Q_y)\right)[w,w] \\
	+ \frac{\partial^2 \Gamma}{\partial v^2}\left(y,\dot{q}(t_y,Q_y)\right)
	\left[
	\frac{\diff }{\diff s}\left(\dot{q}(t_{\zeta(s)},Q_{\zeta(s)})\right)
	\bigg|_{s = 0},w\right].
	\end{multline*}
	Since $q(t,Q_y)$ is a solution of \eqref{eq:hamiltonianSys}
	with energy $E$, by \eqref{eq:bound-deriv-K-nu}
	we have
	\[
	\nu_1 \norm{p(t_y,Q_y)}\le
	\norm*{\frac{\partial K}{\partial p}(y,p(t_y,Q_y))}
	= \norm{\dot{q}(t_y,Q_y)}
	\le
	\nu_2 \norm{p(t_y,Q_y)},
	\]
	and by \eqref{eq:bound-K-nu}
	we obtain that there exist two constants $c_1,c_2 > 0$ such that 
	\begin{equation}
	\label{eq:concavity-Proof-3a}
	c_1(E - V(y))\le \norm{\dot{q}(t_y,Q_y)}^2 \le c_2 (E - V(y)).
	\end{equation}
	Similarly, since $w = \dot{\zeta}(0)$, we have
	\begin{equation}
	\label{eq:concavity-Proof-3}
	c_1(E - V(y))\le \norm{w}^2 \le c_2 (E - V(y)).
	\end{equation}
	Since $\Gamma$ is homogeneous of degree two with respect to $v$
	and recalling the bounds for $G$ given by \eqref{eq:bound-G},
	there exists a constant $c_3$ such that
	\[
	\norm*{\frac{\partial^2 \Gamma}{\partial q \partial v}(q,v)[\omega,\omega]}\le
	c_3 \norm{v}\norm{\omega}^2,
	\quad \forall (q,v) \in D \times \mathbb{R}^n.
	\]
	As a consequence, using
	\eqref{eq:concavity-Proof-3a} and
	\eqref{eq:concavity-Proof-3}
	we have
	\begin{equation}
	\label{eq:concavity-proof4}
	\norm*{
		\frac{\partial^2 \Gamma}{\partial q \partial v}
		\left(y,\dot{q}(t_y,Q_y)\right)[w,w]
	} \le c_3 (E-V(y))^{\frac{3}{2}}.
	\end{equation}
	Let us set
	\[
	v_y \coloneqq 
	\frac{\partial^2 H}{\partial p^2}(y,0) \frac{\partial V}{\partial q}(y).
	\]
	We remark that,
	since $(\partial V/\partial q)(q) \ne 0$ for every $ q \in \partial D$,
	and by the strictly convexity of $H$ given by \eqref{eq:bound-Hessian-K-nu},
	$\norm{v_y} $ is uniformly greater than zero 
	if $y $ is sufficiently near the boundary. 
	Since $\lim_{s \to 0}Q_{\zeta(s)} = Q_y$, 
	using \eqref{eq:dotq-nearBoundary} we have
	\begin{equation*}
	\label{eq:concavity-dotq}
	\dot{q}(t,Q_{\zeta(s)}) = - t v_y + \rho(t,Q_{\zeta(s)}),
	\quad \forall t \in [0, +\infty[,
	\end{equation*}
	with $\diff \rho(0,Q_y) = 0$.
	By Remark \ref{rmk:tyQyC1}, 
	$t_y$ and $Q_y$ are functions of class $C^1$ with respect to $y$.
	Moreover, by Remark \ref{rmk:dzC1}, $\dot{q}(t,Q)$ is of class $C^1$.
	Hence, we obtain
	\begin{multline}
	\label{eq:concavity-diff-dotq}
	\frac{\diff}{\diff s}
	\left(\dot{q}(t_{\zeta(s)},Q_{\zeta(s)})\right)\bigg|_{s = 0} 
	= - \diff t_y[w] v_y + 
	\frac{\partial \rho}{\partial t}(t_y,Q_y) \diff t_y[w] \\
	+ \frac{\partial \rho}{\partial Q}(t_y,Q_y)
	\frac{\partial Q}{\partial y}[w].
	\end{multline}
	Since $q(t_y,Q_y) = y$, we get
	\begin{equation}
	\label{eq:concavity-proof1a}
	\diff t_y[v]\dot{q}(t_y,Q_y)
	+ \frac{\partial q}{\partial Q}(t_y,Q_y)\frac{\partial Q_y}{\partial y}[v] = v,
	\qquad \forall v \in \mathbb{R}^n.
	\end{equation}
	We recall that $(t,Q)$ is a coordinate system
	in a neighbourhood of $\partial D$, where
	$y = q(t_y,Q_y)$.
	Hence,
	if $y$ tends to $\partial D$,
	then $t_y \to 0$ and
	$(\partial q/\partial Q)(t_y,Q_y)$ goes to the identity map.
	Similarly, when $y\to \partial D$,
	$(\partial Q_y/\partial y)[v]$ tends to $v$ uniformly as $\norm{v}\le 1$.
	Then,
	by \eqref{eq:concavity-proof1a},
	$\diff t_y[v]\dot{q}(t_y,Q_y)\to 0$ uniformly in $v$,
	as $y \to \partial D$.
	Therefore,
	since by \eqref{eq:concavity-Proof-3a} and \eqref{eq:concavity-Proof-3}
	$w$ and $\dot{q}(t_y,Q_y)$
	we have 
	\[
	0 <
	\sqrt{\frac{c_1}{c_2}}
	\le \frac{\norm{\dot{q}(t_y,Q_y)}}{\norm{w}}
	\sqrt{\frac{c_2}{c_1}},
	\]
	we obtain
	\begin{equation}
	\label{eq:concavity-diff-ty}
	\lim_{y \to \partial D} \diff t_y[w] =  0.
	\end{equation}
	Since $\diff \rho(0,Q_y) = 0$,
	by \eqref{eq:concavity-diff-dotq} and \eqref{eq:concavity-diff-ty}
	we infer
	\begin{equation}
	\label{eq:concavity-diff-dotq-o1}
	\frac{\diff}{\diff s}\left(\dot{q}(t_{\zeta(s)},Q_{\zeta(s)})\right)\bigg|_{s = 0}
	= o(1),
	\end{equation}
	as $y \to \partial D$.
	Since $\Gamma$
	is homogeneous of degree 2 with respect to $v$
	and using \eqref{eq:bound-G},
	there exist a constant $c_3 > 0 $ such that 
	\[
	\norm*{\frac{\partial^2 \Gamma}{\partial v^2}(q,\xi)[v_1,v_2]}
	\le c_4\norm{v_1}\norm{v_2},
	\quad \forall q \in D,\ \forall \xi \in \mathbb{R}^n\backslash \{0\},
	\ \forall v_1,v_2 \in \mathbb{R}^n.
	\]
	As a consequence,
	by \eqref{eq:concavity-Proof-3} and \eqref{eq:concavity-diff-dotq-o1}
	we obtain
	\begin{multline}
	\label{eq:concavity-proof5}
	\lim_{y \to \partial D}
	\frac{\partial^2 \Gamma}{\partial v^2}\left(y,\dot{q}(t_y,Q_y)\right)
	\left[ \frac{\diff }{\diff s}\left(\dot{q}(t_{\zeta(s)},Q_{\zeta(s)})\right)
	\bigg|_{s = 0},w\right] \\
	= o\left(\sqrt{E - V(y)}\right).
	\end{multline}
	By \eqref{eq:concavity-proof4} and \eqref{eq:concavity-proof5} we obtain
	\begin{equation}
	\label{eq:concavity-proof6}
	I_1(y) = o\left(\sqrt{E - V(y)}\right),
	\end{equation}
	as $y \to \partial D$.
	
	Let us analyse $I_2(y)$.
	Since $(\zeta,p)$ is a solution of Hamilton's equations,
	with $p$ implicitly defined by
	\[
	\dot{\zeta}(s) = \frac{\partial H}{\partial p}(\zeta(s),p(s)),
	\]
	we have
	\[
	\ddot{\zeta}(0) = 
	\frac{\partial^2 H}{\partial q\partial p}(y,p(0)) w 
	- \frac{\partial^2 H}{\partial p^2}(y,p(0))
	\left(
	\frac{\partial K}{\partial q}(y,p(0))
	+ \frac{\partial V}{\partial q}(y)
	\right).
	\]
	If $y \to \partial D$, then $\norm{p(0)} \to 0$.
	Therefore,
	\[
	\frac{\partial^2 H}{\partial q\partial p}(y,p(0)) w \to 0,
	\qquad
	\frac{\partial K}{\partial q}(y,p(0)) \to 0,
	\]
	and we obtain
	$
	\lim_{y \to \partial D} \ddot{\zeta}(0) = 
	- v_y.
	$
	Hence, 
	using also
	\eqref{eq:bound-G} and
	\eqref{eq:concavity-diff-dotq},
	we obtain
	\begin{multline}
	\label{eq:concavity-proof7}
	\lim_{y \to \partial D}
	\frac{\partial \Gamma}{\partial v}
	\left(y,\frac{\dot{q}(t_y,Q_y)}{\norm{\dot{q}(t_y,Q_y)}}\right)
	\left[\ddot{\zeta}(0)\right]= 
	\frac{\partial \Gamma}{\partial v}(y, - v_y)[-v_y]\\
	=
	\Gamma\left( y, v_y \right)
	\ge
	\frac{1}{2\nu_2}
	\norm{v_y}^2.
	\end{multline}
	As a consequence, by 
	\eqref{eq:concavity-Proof-3a} and
	\eqref{eq:concavity-proof7} we obtain
	\begin{equation}
	\label{eq:concavity-proof8}
	\lim_{y \to \partial D}
	\frac{I_2(y)}{\sqrt{E - V(y)}}
	> 0.
	\end{equation}
	Finally, by \eqref{eq:concavity-proof6} and \eqref{eq:concavity-proof8},
	we obtain \eqref{eq:concavity-Proof2} and we are done.
\end{proof}

\section{Proof of the main theorem}%
\label{sec:proof_of_the_main_theorem}

Finally, we are ready to prove Theorem \ref{teo:main}.
\begin{proof}[Proof of Theorem \ref{teo:main}]
	Let $\hat\delta$ be as in Proposition \ref{prop:concavity}
	and set
	\[
	\Omega = \psi^{-1}(]\hat\delta,+\infty[).
	\]
	By continuity, $\Omega$ is an open subset of $D$ and $\partial\Omega =
	\psi^{-1}(\hat\delta)$.
	By Lemma \ref{lem:psiC2}, $\psi$ is of class $C^2 $ in $D_{\bar\delta}$, 
	and since $\diff\psi \ne 0$ on $\partial\Omega$,
	we have that $\partial\Omega$ is of class $C^2$.
	Since $\bar\delta$ satisfies property \eqref{eq:def-bardelta} and 
	$\hat\delta \le \bar\delta$, $\overline{\Omega}$ is homeomorphic to $\overline{D}$.
	Since $\partial\Omega$ is a level hyper-surface of $\psi$,
	for every $y \in \partial\Omega$, $v \in T_y\partial\Omega$ if and only if $\diff\psi(y)[v] = 0$.
	Recalling Definition \ref{def:Finsler-concavity},
	Proposition \ref{prop:concavity} implies that
	$\overline{\Omega}$ is strictly concave with respect to the Finsler metric $F$. 
	
	Let $\gamma\colon [0,1] \to \overline{\Omega}$ be an orthogonal Finsler geodesic chord.
	We will prove the desired properties of the extension $\hat\gamma\colon [\alpha,\beta] \to \overline{D}$
	only in the interval $[1,\beta]$.
	The case $[\alpha,0]$ is analogue.
	Set $y = \gamma(1)$.
	Since $\gamma$ is an orthogonal Finsler geodesic chord,
	it satisfies \eqref{eq:conBounCond},
	hence 
	\[
	\frac{\partial G}{\partial v}(y,\dot{\gamma}(1))[v] = 0,
	\quad \forall v \in T_{\gamma(1)}\partial\Omega.
	\]
	The minimizer curve $\gamma_y$ satisfies
	\[
	\frac{\partial G}{\partial v}(y,\dot{\gamma}_y(0))[v] = 0,
	\quad \forall v \in T_{\gamma(1)}\partial\Omega,
	\]
	thus $\dot{\gamma}(1)$ and $\dot{\gamma}_y(0)$ are parallel.
	As a consequence, the curve 
	$	\bar\gamma\colon [0,2] \to \overline{D}	$ defined as
	\[
	\bar\gamma(s) = 
	\begin{cases}
	\gamma(s), &		\mbox{if } s \in [0,1], \\
	\gamma_y(s - 1), &		\mbox{if } s \in ]1,2], 
	\end{cases}
	\]
	is of class $C^1$	and it is a geodesic with respect to $F$,
	up to a suitable time reparametrization.
	With the analogue extension in $[\alpha,0]$, 
	we obtain a geodesic $\hat\gamma\colon[\alpha,\beta]\to\overline{D}$
	such that $\hat\gamma(\alpha),\hat\gamma(\beta) \in \partial D$ 
	and $\hat{\gamma}(]\alpha,\beta[) \subset D$.
	By Lemma \ref{lem:timeParam-HtoU} and Lemma \ref{lem:def-G},
	we have that
	\[
	(q(t),p(t)) =\mathcal{L}^{-1}(\hat\gamma(t),\dot{\hat\gamma}(t)) 
	\quad	\forall t \in ]\alpha,\beta[,
	\]
	is a solution of \eqref{eq:hamiltonianSys} with energy $E$ for $H$, 
	up to time reparametrization.
	Using also Lemma \ref{lem:finiteTk} to ensure that 
	the time reparametrization is finite, 
	we obtain the existence of a diffeomorphism $\sigma\colon [0,T] \to [\alpha,\beta]$, 
	with $\sigma(0) = \alpha$ and $\sigma(T) = \beta$,
	such that
	\[
	(q,p)\circ \sigma\colon [0,T] \to \overline{\Sigma}
	\]
	is a brake orbit.
\end{proof}

\end{document}